\documentclass[a4paper,12pt,english]{article}
\baselineskip20pt

\usepackage{amsmath}
\usepackage{amssymb}
\usepackage{latexsym}
\usepackage{amsthm}
\usepackage{url}
\usepackage{multirow}
\usepackage{tabularx,booktabs,adjustbox}
\usepackage{enumitem}
\usepackage{verbatim}
\usepackage{authblk}
\numberwithin{equation}{section}

\usepackage[x11names]{xcolor} 
\colorlet{heatcol}{Red1!70!} 
\colorlet{wavecol}{DodgerBlue1!70!} 
\colorlet{mixedcol}{Maroon2!70!} 

\usepackage{amsthm}
\usepackage{tikz,pgfplots}
\usetikzlibrary{patterns}
\usepackage{float}
\usepackage{caption}
\usepackage{subfig}



\def\d{\displaystyle}
\def\p{\partial}

\def\e{\varepsilon}
\def\k{\kappa}

\def\l{\lambda}
\def\n{\nu}

\def\R{\mathbb{R}}
\def\N{\mathbb{N}}

\def\B{{\mathcal B}}
\def\F{{\mathcal F}}

\def\ints{\int_{\R^n}}
\def\o{\overline}
\def\til{~}

\DeclareMathOperator*{\supp}{supp}
\DeclareMathOperator*{\sgn}{sgn}

\newtheorem{theorem}{Theorem}

\newtheorem{remark}{Remark}[section]
\newtheorem{lemma}{Lemma}

\newtheorem{definition}{Definition}
\pagestyle{plain}


    \setlength{\topmargin}{-1cm}
	\setlength{\oddsidemargin}{0cm}
	\setlength{\evensidemargin}{-0.5cm}
	\setlength{\textwidth}{16cm}
	\setlength{\linewidth}{16cm}
	\setlength{\textheight}{23.5cm}

\pagestyle{plain}

\title{Heat-like and wave-like lifespan estimates\\
	for solutions of semilinear damped wave equations\\
	via a Kato's type lemma}
\author{
	Ning-An Lai,
	\quad
	Nico Michele Schiavone,
	\quad
	Hiroyuki Takamura
	}

\date{}

\AtEndDocument{%
	\par
	\medskip
	\begin{tabular}{@{}l@{}}%
	\end{tabular}}

\AtEndDocument{%
	\par
	\medskip
	\begin{tabular}{@{}l@{}}%
		\textsc{Ning-An Lai}
		\\
		Institute of Nonlinear Analysis and Department of Mathematics,
		Lishui University\\
		No. 1 Xueyuan Rd., Lishui City 323000, Zhejiang,
		China
		\\
		\textit{E-mail}: \texttt{ninganlai@lsu.edu.cn}
	\end{tabular}}

\AtEndDocument{%
	\par
	\medskip
	\begin{tabular}{@{}l@{}}%
		\textsc{Nico Michele Schiavone}
		\\
		Department of Mathematics \lq\lq Guido Castelnuovo'',
		Sapienza University of Rome
		\\
		Piazzale Aldo Moro 5, Roma 00185, Italy
		\\
		\textit{E-mail}: \texttt{schiavone@mat.uniroma1.it}
	\end{tabular}}

\AtEndDocument{%
	\par
	\medskip
	\begin{tabular}{@{}l@{}}%
		\textsc{Hiroyuki Takamura}
		\\
		Mathematical Institute,
		Tohoku University
		\\
		Aoba, Sendai 980-8578, Japan
		\\
		\textit{E-mail}: \texttt{hiroyuki.takamura.a1@tohoku.ac.jp}
\end{tabular}}

\pagestyle{plain}

\begin{document}
	\maketitle
	
		\begin{center}\footnotesize
			\begin{tabularx}{0.7\textwidth}{lX}
				{\bf Keywords:}
				& 
				semilinear wave equation, scale-invariant damping, blow-up, lifespan
				\\[0.35cm]
				{\bf MSC2020:}
				&
				primary 35L71, secondary 35B44
			\end{tabularx}
			%
		\end{center}

	\smallskip
	
	\begin{abstract}
    In this paper we study several semilinear damped wave equations with \lq\lq subcritical" nonlinearities, focusing on demonstrating lifespan estimates for energy solutions.
	Our main concern is on equations with scale-invariant damping and mass.
	Under different assumptions imposed on the initial data, lifespan estimates from above are clearly showed. The key fact is that we find \lq\lq transition surfaces", which distinguish lifespan estimates between \lq\lq wave-like" and \lq\lq heat-like" behaviours.
	Moreover we conjecture that the lifespan estimates on the
	\lq\lq transition surfaces" can be logarithmically improved.
	As direct consequences, we reorganize the blow-up results and lifespan estimates for the massless case in which the \lq\lq transition surfaces" degenerate to \lq\lq transition curves". Furthermore, we obtain improved lifespan estimates in one space dimension, comparing to the known results.

      We also study semilinear wave equations with the scattering damping and negative mass term, and find that if the decay rate of the mass term equals to $2$, the lifespan estimate is the same as one special case of the equations with the scale-invariant damping and positive mass.

      The main strategy of the proof consists of a Kato's type lemma in integral form, which is established by iteration argument.
		
	\end{abstract}
	
	\tableofcontents


\section{Introduction}
The aim of the present work is to study blow-up phenomena and lifespan estimates for solutions
of Cauchy problem with small data for several semilinear damped wave models. Indeed we mainly concern about semilinear wave equations with the scale-invariant damping, mass and power-nonlinearity
\begin{equation}
\label{eq:main_problem}
\left\{
\begin{aligned}
& u_{tt} - \Delta u + \frac{\mu_1}{1+t} u_t + \frac{\mu_2}{(1+t)^{2}} u = |u|^p, \quad\text{in $\R^n\times (0,T)$}, \\
& u(x,0)=\e f(x), \quad u_t(x,0)=\e g(x), \quad x\in\R^n,
\end{aligned}
\right.
\end{equation}
where  $\mu_1, \mu_2 \in \R$, $p>1$, $n\in\N$, $T>0$ and $\e>0$ is a \lq\lq small'' parameter. In particular, we are interested in exploring a competition between so-called \lq\lq heat-like'' and \lq\lq wave-like'' behaviour of the solutions, which concerns not only critical exponents, but also lifespan estimates, in a way that we will clarify later.

Let us firstly denote energy and weak solutions of our problem \eqref{eq:main_problem}.

\begin{definition}
	We say that $u$ is an energy solution of \eqref{eq:main_problem} over $[0,T)$ if
	\begin{equation*}
		u \in C([0,T), H^1(\R^n)) \cap C^1([0,T),L^2(\R^n)) \cap C((0,T), L^p_{loc}(\R^n))
	\end{equation*}
	satisfies $u(x,0)=\e f(x)$ in $H^1(\R^n)$, $u_t(x,0) = \e g(x)$ in $L^2(\R^n)$ and
	\begin{equation}
		\label{eq:energy_solution}
		\begin{split}
			&\int_{\R^n}u_t(x,t)\phi(x,t)dx-\int_{\R^n}\e g(x)\phi(x,0)dx\\
			&+\int_0^tds\int_{\R^n}\left\{-u_t(x,s)\phi_t(x,s)+\nabla u(x,s)\cdot\nabla\phi(x,s)\right\}dx \\
			&+\int_0^tds\int_{\R^n}\frac{\mu_1 }{1+s}u_t(x,s) \phi(x,s)dx + \int_0^t ds \ints \frac{\mu_2}{(1+s)^{2}}u(x,s)\phi(x,s) \\
			= & \int_0^tds\int_{\R^n}|u(x,s)|^p\phi(x,s)dx
		\end{split}
	\end{equation}
	for $t\in[0,T)$ and any test function $\phi \in C_0^\infty(\R^n \times [0,T))$.
	
	Employing the integration by parts in the above equality and letting $t\to T$, we reach to the definition of the weak solution of \eqref{eq:main_problem}, that is
	\begin{equation*}
		\begin{split}
			&\int_{\R^n\times[0,T)}
			u(x,s)
			\left\{\phi_{tt}(x,s)-\Delta \phi(x,s)
			-\frac{\p}{\p s} \left(\frac{\mu_1}{1+s}\phi(x,s)\right)
			+ \frac{\mu_2}{(1+s)^{2}} \phi(x,s) \right\}dxds\\
			=&\ 
			\e \int_{\R^n} \left\{ \mu_1 f(x)\phi(x,0) + g(x)\phi(x,0) - f(x)\phi_t(x,0) \right\} dx
			+\int_{\R^n\times[0,T)}|u(x,s)|^p\phi(x,s)dxds.
		\end{split}
	\end{equation*}
		
\end{definition}

We recall that the \emph{critical exponent} $p_{crit}$ of \eqref{eq:main_problem} is the smallest exponent $p_{crit}>1$ such that, if $p>p_{crit}$, there exists a unique global energy solution to the problem, whereas if $1<p \le p_{crit}$ the solution blows up in finite time. In the latter case, one is also interested in finding estimates for the \emph{lifespan} $T_\e$, which is the maximal existence time of the solution, depending on the parameter $\e$.

Our principal model is the one in \eqref{eq:main_problem}, for which we obtain Theorem\til\ref{thm1} and Theorem\til\ref{thm:H=0}, according to the different conditions imposed on the initial data.
As straightforward consequences, we also obtain Theorem\til\ref{cor1} and Theorem\til\ref{cor2} for the massless case, i.e. the model with $\mu_2=0$. The lifespan estimate in dimension $n=1$ in this case is improved, comparing to the known results. Moreover, we continue the study of semilinear wave equations with scattering damping, negative mass term and power nonlinearity, introduced by the authors in \cite{LST19.1, LST19.2}.

The paper is organized in this way: in the rest of the Introduction, we will sketch the background of the problems under consideration and we will exhibit our results, which will be proved in Section\til\ref{sec:proof}, exploiting, as main tool, a Kato's type lemma in integral form presented in Section\til\ref{sec:kato}.

\subsection{Heat vs. wave}\label{sub:HvsW}
Let us start considering the toy-models of the wave and heat equations:
\begin{equation*}
\label{wave-heat}
\left\{
\begin{aligned}
& u_{t} - \Delta u = |u|^p, 
\\
& u(x,0)=\e f(x), 
\end{aligned}
\right.
\qquad\qquad
\left\{
\begin{aligned}
& u_{tt} - \Delta u = |u|^p, 
\\
& (u,u_t)(x,0)=\e (f,g)(x). 
\end{aligned}
\right.
\end{equation*}
Nowadays the study of these two equations is almost classic: the well-known results include the lifespan estimates and the critical exponents, which are the so-called Fujita exponent $p_F(n)$ and the Strauss exponent $p_S(n)$, corresponding to the heat and the wave equation respectively. For the purpose of this work, let us define these two exponents for all $\nu\in\R$:
\begin{equation*}\label{def:fujita-strauss}
p_F(\n) :=
\left\{
\begin{aligned}
&1 + \frac{2}{\n} & &\text{if $\n>0$}, \\
&+\infty & &\text{if $ \n \le 0$,}
\end{aligned}
\right.
\qquad
p_S(\n) :=
\left\{
\begin{aligned}
&\frac{\n+1+\sqrt{\n^2+10\n-7}}{2(\n-1)} & &\text{if $\n > 1$}, \\
&+\infty & &\text{if $ \n \le 1$.}
\end{aligned}
\right.
\end{equation*}
We remark that
\begin{align*}
1 < p < p_F(\n)
&\Longrightarrow
\gamma_F(p,\n):=2-\n(p-1) > 0,
\\
1 < p < p_S(\n)
&\Longrightarrow
\gamma_S(p,\n):= 2+(\n+1)p-(\n-1)p^2 > 0.
\end{align*}
In particular, if $\n>0$, $p_F(\n)$ is the solution of the linear equation $\gamma_F(p,\n)=0$, whereas if $\n > 1$, $p_S(\n)$ is the positive solution of the quadratic equation $\gamma_S(p,\n)=0$.
Although the expression $\gamma_S(p,\nu)$ is well-known in the literature, the introduction of $\gamma_F(p,\n)$ is justifyed from the fact that $\gamma_F$ plays for the heat equation the same role that $\gamma_S$ plays for the wave equation, as it emerge from the lifespan estimates.

Suppose for the simplicity that $f,g>0$ are compactly supported (for different conditions on the initial data, we can have different lifespan estimates, see Subsection\til\ref{subsec:h=0}). We have that the blow-up results are the ones collected in the following table.

\begin{center}\footnotesize
	\begin{adjustbox}{width=\textwidth}
	\begin{tabular}{lllll}
		\toprule
		&& {Heat} && {Wave} \\
		\midrule[\heavyrulewidth]
		
		\begin{tabular}
			{@{}l@{}}
			Critical exponent $p_{crit}$
		\end{tabular}
		&&
		$p_F(n)$
		&&
		$p_S(n)$
		
		\\\midrule
		
		\begin{tabular}
			{@{}l@{}}
			Subcritical lifespan $T_\e$
			\\
			for $1<p<p_{crit}$
		\end{tabular}
		&&
		$\sim \e^{-2(p-1)/\gamma_F(p,n)}$
		&&
		\begin{tabular}
			{@{}l@{}}
			$\sim\e^{-(p-1)/\gamma_F(p,n-1)}$
			\\ \quad if $n=1$ or $n=2$, $1<p<2$
			\vspace{2mm}
			\\
			$\sim a(\e)$
			\\ \quad if $n=p=2$, $\e^2 a^2 \log(1+a)=1$
			\vspace{2mm}
			\\
			$\sim \e^{-2p(p-1)/\gamma_S(p,n)}$
			\\ \quad if $n=2,2<p<p_S(n)$ or $n\ge3$
		\end{tabular}
	
		\\\midrule
		
		\begin{tabular}
			{@{}l@{}}
			Critical lifespan $T_\e$
			\\
			for $p=p_{crit}$
		\end{tabular}
		&&
		$\sim \exp(C \e^{-(p-1)})$
		&&
		\begin{tabular}
			{@{}l@{}}
			$\sim \exp(C \e^{-p(p-1)})$
			\\
			\quad (the lower bound is open for $n\ge9$ in general)
		\end{tabular}
		
		\\
		\bottomrule
	\end{tabular}
	\end{adjustbox}
\end{center}
Here and in the following, we use the notation $F \lesssim G$ (respectively $F \gtrsim G$) if there exists a constant $C>0$ independent of $\e$ such that $F \le C G$ (respectively $F \ge C G$), and the notation $F \sim G$ if $F\lesssim G$ and $F \gtrsim G$.

For a more detailed story of these results, we refer to the book \cite{ER}, the doctoral thesis \cite{wak14-thesis}, the introductions of \cite{IKTW19-aa,T15,TW11,TW14} and the references therein.

For the comparison between the heat and wave equations, let us introduce an informal but evocative notation to describe the behaviour of the critical exponent and of the lifespan estimates in our models. We will call the critical exponent
\emph{heat-like} if it is related to the Fujita exponent, i.e. $p_{crit} = p_F(\nu)$ for some $\nu\in\R$,
whereas we will call it \emph{wave-like} if it is related to the Strauss exponent, i.e. $p_{crit} = p_S(\nu)$ for some $\nu\in\R$.

Similarly, we will say that the lifespan estimate is \emph{heat-like} if it is related in some way to the one of the heat equation, i.e. to the exponent $2(p-1)/\gamma_F(p,\nu)$ in the subcritical case and to $\exp(\e^{-(p-1)})$ in the critical one, whereas we will say it \emph{wave-like} if related to the one of the wave equation, i.e. to the exponent $2p(p-1)/\gamma_S(p,\nu)$ in the subcritical case and to $\exp(\e^{-p(p-1)})$ in the critical one. However, we also define a \emph{mixed-type} behaviour when the lifespan estimate is related to $2p(p-1)/\gamma_F(p,\nu)$ in the subcritical case (as we will see in Theorem\til\ref{cor2} \& \ref{thm:H=0}), to remark that the lifespan is larger respect to the heat-like one, due to the additional $p$ in the exponent.

\subsection{Damped wave equation}

Let us proceed further by adding the damping term $\mu/(1+t)^{\beta}$ to the wave equation, with $\mu\ge 0 $ and $\beta\in\R$, hence we consider the Cauchy problem
\begin{equation}
\label{eq:damped}
\left\{
\begin{aligned}
& u_{tt} - \Delta u + \frac{\mu}{(1+t)^\beta} u_t = |u|^p, \quad\text{in $\R^n\times (0,T)$}, \\
& u(x,0)=\e f(x), \quad u_t(x,0)=\e g(x), \quad x\in\R^n.
\end{aligned}
\right.
\end{equation}
According to the works by Wirth \cite{Wir04, Wir06, Wir07}, in the study of the associated homogeneous problem
\begin{equation}
\label{eq:damped0}
\left\{
\begin{aligned}
& u^0_{tt}-\Delta u^0+\frac{\mu}{(1+t)^\beta}u^0_t=0, \\
& u^0(x,0)=f(x), \quad u^0_t(x,0)=g(x), 
\end{aligned}
\right.
\end{equation}
we can classify the damping term accordingly to the different values of $\beta$ into four cases.
When $\beta<1$, the damping term is said to be \emph{overdamping} and the solution
does not decay to zero when $t\to\infty$.
If $-1 \le \beta<1$, the solution behaves
like that of the heat equation and we say that the damping term is \emph{effective}. Hence, the term $u^0_{tt}$ in \eqref{eq:damped0} has
no influence on the behavior of the solution and the $L^p-L^q$ decay estimates of the solution are almost the same as those of the heat equation.
In contrast, when $\beta>1$, it is known that the solution behaves like that
of the wave equation, which means that the damping term in \eqref{eq:damped0} has no
influence on the behavior of the solution. In fact, in this case the solution
scatters to that of the free wave equation when $t\to\infty$, and thus we say that
we have \emph{scattering}.
Finally, when $\beta=1$, the equation in \eqref{eq:damped0} is invariant under the scaling
\begin{equation*}
	\widetilde{u^0}(x,t) := u^0(\sigma x, \sigma(1+t)-1), \quad \sigma>0,
\end{equation*}
and hence we say that the damping term is \emph{scale-invariant}. In this case the behaviour of the solution of \eqref{eq:damped0} has been observed to be determined by the value of $\mu$.
We summarize all the classifications of the damping term in
\eqref{eq:damped0} in the next table.
\begin{center}
	\begin{tabular}{lll}
		\toprule
		Range of $\beta$ && Classification  \\
		\midrule
		
		$\beta\in(-\infty,-1)$ && overdamping\\
		
		$\beta\in[-1,1)$ && effective\\
		
		$\beta=1$ && scaling invariant \\
		
		$\beta\in(1,\infty)$ && scattering \\
		\bottomrule
	\end{tabular}
\end{center}

Let us return to problem \eqref{eq:damped}, which inherits the above terminology and has very different behaviours from case to case.
Indeed, in the overdamping case the solution exist globally for any $p>1$. In the effective case, the problem is heat-like, both in the critical exponent and in the lifespan estimates,
while in the scattering case the problem seems to be wave-like. Finally, the scale-invariant case has an intermediate behaviour, and a competition between heat-like and wave-like arises. Before moving to the last case, let us collect in the following two tables some global existence and blow-up results for $\beta\neq1$, at the best of our knowledge.

\begin{center}
	\footnotesize
	
	\begin{adjustbox}{width=\textwidth}
		\begin{tabular}{llll}
			\toprule
			\multicolumn{4}{c}{\emph{Global-in-time existence for $\beta\neq1$}}
			\\\midrule
			Authors & Range of $\beta$ & Dimension $n$ & Exponent $p$
			\\\midrule[\heavyrulewidth]
			
			\begin{tabular}
				{@{}l@{}} Ikeda, Wakasugi \cite{IW20}
			\end{tabular}
			&
			$\beta<-1$
			&
			\begin{tabular}
				{@{}l@{}}
				$n\ge1$
			\end{tabular}
			&
			$p>1$
			
			\\\midrule
			
			\begin{tabular}
				{@{}l@{}}
				Wakasugi \cite{wakasugi17}
			\end{tabular}
			&
			$\beta=-1$
			&
			\begin{tabular}
				{@{}l@{}}
				$n=1,2$
				\\
				$n\ge3$
			\end{tabular}
			&
			\begin{tabular}
				{@{}l@{}}
				$p>p_F(n)$ 
				\\
				$p_F(n)<p<\frac{n}{n-2}$ 
			\end{tabular}
			
			\\\midrule
			
			Todorova, Yordanov \cite{TY01}
			&
			$\beta=0$
			&
			\begin{tabular}
				{@{}l@{}}
				$n=1,2$
				\\
				$n\ge3$
			\end{tabular}
			&
			\begin{tabular}
				{@{}l@{}}
				$p>p_F(n)$ 
				\\
				$p_F(n)<p\le\frac{n}{n-2}$ 
			\end{tabular}

			\\\midrule
			
			\begin{tabular}
				{@{}l@{}}
				D'Abbicco, Lucente, Reissig \cite{DLR15} \\
				Nishihara \cite{Nis11} \\
				Lin, Nishihara, Zhai \cite{LNZ12}
			\end{tabular}
			&
			\begin{tabular}
				{@{}l@{}}
				$-1<\beta<1$
				\\
				$\beta\neq0$
			\end{tabular}
			&
			\begin{tabular}
				{@{}l@{}}
				$n=1,2$
				\\
				$n\ge3$
			\end{tabular}
			&
			\begin{tabular}
				{@{}l@{}}
				$p>p_F(n)$ 
				\\
				$p_F(n)<p<\frac{n+2}{n-2}$ 
			\end{tabular}
			
			\\\midrule
			
			Liu, Wang \cite{LW}
			&
			$\beta>1$
			&
			\begin{tabular}
				{@{}l@{}}
				$n=3,4$
			\end{tabular}
			&
			\begin{tabular}
				{@{}l@{}}
				$p>p_S(n)$ 
			\end{tabular}
			
			\\
			\bottomrule
		\end{tabular}
		
	\end{adjustbox}
\end{center}

\begin{center}
	\footnotesize
	
	\begin{adjustbox}{width=\textwidth}

	\begin{tabular}{llll}
		\toprule
		\multicolumn{4}{c}{\emph{Blow-up in finite time for $\beta\neq1$}}
		\\\midrule
		Authors & Range of $\beta$ & Exponent $p$ & Lifespan $T_\e$
		\\\midrule[\heavyrulewidth]

		\begin{tabular}
			{@{}l@{}}
			Fujiwara,
			Ikeda,
			Wakasugi \cite{FIW19}
			\\
			Ikeda, Inui \cite{II19}
		\end{tabular}
		&
		$\beta=-1$
		&
		\begin{tabular}
			{@{}l@{}}
			$1<p<p_F(n)$
			\\
			$p=p_F(n)$
		\end{tabular}
		&
		\begin{tabular}
			{@{}l@{}}
			$\sim \exp( C \e^{-{2(p-1)}/{\gamma_F(p,n)}})$
			\\
			$\sim \exp\exp(C\e^{-(p-1)})$
		\end{tabular}
		
		\\\midrule
		
		\begin{tabular}
			{@{}l@{}}
			Li, Zhou \cite{LZ95}, Zhang \cite{Zhang01}\\
			Todorova, Yordanov \cite{TY01}\\
			Kirane, Qafsaoui \cite{KQ01}\\
			Ikeda, Ogawa \cite{IO16}, Lai, Zhou \cite{LaiZ19}
			\\
			Ikeda, Wakasugi \cite{IW15}, Nishihara \cite{Nis11}\\
			Fujiwara, Ikeda, Wakasugi \cite{FIW19}
		\end{tabular}
		&
		$\beta=0$
		&
		\begin{tabular}
			{@{}l@{}}
			$1<p<p_F(n)$ \\
			$p=p_F(n)$
		\end{tabular}
		&
		\begin{tabular}
			{@{}l@{}}
			$\sim \e^{-{2(p-1)}/{\gamma_F(p,n)}}$
			\\
			$\sim \exp(C\e^{-(p-1)}) $
		\end{tabular}

		\\\midrule
		
		\begin{tabular}
			{@{}l@{}}
			Fujiwara,
			Ikeda,
			Wakasugi \cite{FIW19}
			\\
			Ikeda, Inui \cite{II19}\\
			Ikeda, Ogawa \cite{IO16}\\
			Ikeda, Wakasugi \cite{IW15}
		\end{tabular}
		&
		\begin{tabular}
			{@{}l@{}}
			$-1<\beta<1$
			\\
			$\beta\neq0$
		\end{tabular}
		&
		\begin{tabular}
			{@{}l@{}}
			$1<p<p_F(n)$
			\\
			$p=p_F(n)$
		\end{tabular}
		&
		\begin{tabular}
			{@{}l@{}}
			$\sim \e^{-\frac{2(p-1)}{(1+\beta) \gamma_F(p,n)}}$
			\\
			$\sim \exp(C\e^{-(p-1)}) $
		\end{tabular}
		
		\\\midrule
		
		\begin{tabular}
			{@{}l@{}}
			Lai, Takamura \cite{LT18}
			\\
			Wakasa, Yordanov \cite{WY19}
		\end{tabular}
		&
		$\beta>1$
		&
		\begin{tabular}
			{@{}l@{}}
			$1<p<p_S(n)$\\
			$p=p_S(n)$
		\end{tabular}
		&
		\begin{tabular}
			{@{}l@{}}
			$\lesssim \e^{-2p(p-1)/\gamma_S(p,n)}$
			\\
			$\lesssim \exp(C\e^{-p(p-1)})$
		\end{tabular}

		\\
		\bottomrule
	\end{tabular}
	
\end{adjustbox}
\end{center}


\subsection{Scale-invariant damped wave equation}
\label{sub:damped}

We consider now \eqref{eq:damped} for $\beta=1$, hence we consider the Cauchy problem
\begin{equation}
\label{eq:damped_b=1}
\left\{
\begin{aligned}
& u_{tt} - \Delta u + \frac{\mu}{1+t} u_t = |u|^p, \quad\text{in $\R^n\times (0,T)$}, \\
& u(x,0)=\e f(x), \quad u_t(x,0)=\e g(x), \quad x\in\R^n.
\end{aligned}
\right.
\end{equation}
The scale-invariant problem has been studied intensively in the last years. This great interest is motivated by the fact that, differently from the damped wave equation with $\beta\neq1$, in the scale-invariant case the results depend also on the damping coefficient $\mu$, for determining both the critical exponent and the lifespan estimate. Hence, the situation is a bit more complicated, being the scale-invariant case the threshold between the effective ($-1 \le \beta<1$) and non-effective ($\beta>1$) damping, and hence the threshold between a heat-like and a wave-like behaviour.

In the following two tables we collect, at the best of our knowledge, results concerning the existence and the blow-up for the scale-invariant damping.

\begin{center}
	\footnotesize
	
	\begin{adjustbox}{width=\textwidth}
		
		\begin{tabular}{llll}
			\toprule
			\multicolumn{4}{c}{\emph{Global-in-time existence for $\beta=1$}}
			\\\midrule
			Authors & Dimension $n$ & Coefficient $\mu$ & Exponent $p$
			\\\midrule[\heavyrulewidth]

			D'Abbicco \cite{Dab15}
			& \begin{tabular}
				{@{}l@{}}
				$n=1$ \\ $n=2$ \\ $n\ge3$
			\end{tabular}
			& \begin{tabular}
				{@{}l@{}}
				$\mu \ge \tfrac{5}{3}$ \\ $\mu \ge 3$ \\ $\mu\ge n+2$
			\end{tabular}
			& \begin{tabular}
				{@{}l@{}}
				$p> p_F(1)$ \\ $p>p_F(2)$ \\ $p_F(n)<p \le \tfrac{n}{n-2}$
			\end{tabular}
			
			\\\midrule
			
			\begin{tabular}
				{@{}l@{}}
				D'Abbicco, Lucente, Reissig \cite{DLR15}\\
				Kato, Sakuraba \cite{KS19}, Lai \cite{LaiN}
			\end{tabular}
			&
			$n=2,3$
			&
			\begin{tabular}
				{@{}l@{}}
				$\mu=2$
			\end{tabular}
			& \begin{tabular}
				{@{}l@{}}
				$p> p_S(n+2)$
			\end{tabular}
			
			\\\midrule
			
			\begin{tabular}
				{@{}l@{}}
				\begin{tabular}
					{@{}l@{}}
					D'Abbicco, Lucente \cite{DL15}
				\end{tabular}
			\end{tabular}
			& \begin{tabular}
				{@{}l@{}}
				$n\ge5$\\
				(odd dim., rad. symm.)
			\end{tabular}
			& \begin{tabular}
				{@{}l@{}}
				$\mu =2$
			\end{tabular}
			&
			\begin{tabular}
				{@{}l@{}}
				$p_S(n+2) < p < \min\left\{2,\tfrac{n+1}{n-3}\right\}$
			\end{tabular}
			
			\\\midrule
			
			\begin{tabular}
				{@{}l@{}}
				Palmieri \cite{Pal19-mmas}
			\end{tabular}
			&
			$n\ge 4$ (even dim.)
			& \begin{tabular}
				{@{}l@{}}
				$\mu =2$
			\end{tabular}
			&
			\begin{tabular}
				{@{}l@{}}
				$p_S(n+2) < p < p_F(2)$
			\end{tabular}
			
			\\
			\bottomrule
		\end{tabular}
	\end{adjustbox}
\end{center}

\medskip

\begin{center}
	\footnotesize
	
	\begin{adjustbox}{width=\textwidth}
		
		\begin{tabular}{lllll}
			\toprule
			\multicolumn{5}{c}{\emph{Blow-up in finite time for $\beta=1$}}
			\\\midrule
			Authors & Dim. $n$ & Coefficient $\mu$ & Exponent $p$ & Lifespan $T_\e$
			\\\midrule[\heavyrulewidth]

		\begin{tabular}
			{@{}l@{}} Wakasugi \\ \cite{wak14,wak14-thesis}
		\end{tabular}
		& $n \ge 1$
		& \begin{tabular}
			{@{}l@{}} $\mu\ge1$ \\ $0<\mu<1$
		  \end{tabular}
	  	& \begin{tabular}
	  		{@{}l@{}} $1<p\le p_F(n)$ \\ $1<p<1+\frac{2}{n+\mu-1}$
	  	  \end{tabular}
  		& \begin{tabular}
  			{@{}l@{}} $\lesssim \e^{-{(p-1)}/{\gamma_F(p,n)}}$ \\ $\lesssim \e^{-{(p-1)}/{\gamma_F(p,n+\mu-1)}}$
  		  \end{tabular}
  		
  		\\\midrule
  		
  		\begin{tabular}
  			{@{}l@{}} D'Abbicco,\\Lucente,\\Reissig \cite{DLR15}
  		\end{tabular}
  		& \begin{tabular}
  			{@{}l@{}}
  			$n=1$ \\ $n = 2,3$
  		\end{tabular}
  		& $\mu=2$
  		& \begin{tabular}
  			{@{}l@{}}
  			$1<p \le p_F(1)$ \\ $1<p \le p_S(n+2)$
  		  \end{tabular}
  		&
     	
     	 \\\midrule
     	
     	 \begin{tabular}
     	 	{@{}l@{}}
     	 	Wakasa \cite{wakasa16} \\
     	 	Kato, \\ Takamura, \\ Wakasa \cite{KTW19}
     	\end{tabular}
     	 & $n=1$
     	 & $\mu=2$
     	 & \begin{tabular}
     	 	{@{}l@{}}
     	 	$1<p< p_F(1)$ \\ $p=p_F(1)$
     	 \end{tabular}
     	 & \begin{tabular}
     	 	{@{}l@{}}
     	 	$\sim \e^{-{(p-1)}/{\gamma_F(p,1)}}$ \\ $\sim \exp(C\e^{-(p-1)})$
     	 \end{tabular}

	      \\\midrule
	
	      \begin{tabular}
	      	{@{}l@{}}
	      	Imai, \\ Kato, \\ Takamura, \\ Wakasa \cite{IKTW19-arxiv}
	      \end{tabular}
	      & $n=2$
	      & $\mu=2$
	      & \begin{tabular}
	      	{@{}l@{}}
	      	$1<p< p_F(2)=p_S(2)$ \\ $p=p_F(2)=p_S(2)$
	      \end{tabular}
	      & \begin{tabular}
	      	{@{}l@{}}
	      	$\sim \e^{-{(p-1)}/{\gamma_F(p,2)}}$ \\ $\sim \exp(C\e^{-1/2})$
	      \end{tabular}

  		\\\midrule
  		
  		\begin{tabular}
  			{@{}l@{}}
  			Kato, \\ Sakuraba \cite{KS19}
  		\end{tabular}
  		& $n=3$
  		& $\mu=2$
  		& \begin{tabular}
  			{@{}l@{}}
  			$1<p< p_S(5)$ \\ $p=p_S(5)$
  		\end{tabular}
  		& \begin{tabular}
  			{@{}l@{}}
  			$\sim \e^{-{2p(p-1)}/{\gamma_S(p,5)}}$ \\ $\sim \exp(C\e^{-p(p-1)})$
  		\end{tabular}

       \\\midrule

       \begin{tabular}
       	{@{}l@{}}
       	Lai, \\ Takamura, \\ Wakasa \cite{LTW17}
       \end{tabular}
       & $n \ge 2$
       & $0<\mu<\frac{n^2+n+2}{2(n+2)}$
       & \begin{tabular}
       	{@{}l@{}}
       	$p_F(n) \le p < p_S(n+2\mu)$
       \end{tabular}
       & \begin{tabular}
       	{@{}l@{}}
       	$\lesssim \e^{-{2p(p-1)}/{\gamma_S(p,n+2\mu)}}$
       \end{tabular}

   		\\\midrule
   		
   		\begin{tabular}
   			{@{}l@{}}
   			Ikeda, \\ Sobajima \cite{IS18}
   		\end{tabular}
   		& \begin{tabular}
   			{@{}l@{}}
   			$n\ge1$
   			\end{tabular}
   		& \begin{tabular}
   			{@{}l@{}}
   			\vspace{2mm}
   			$0\le\mu< \frac{n^2+n+2}{n+2} $ \\
   			($\mu\neq0$ if $n=1$)
   		\end{tabular}
   		& \begin{tabular}
   			{@{}l@{}}
   			$p_F(n) < p \le p_S(n+\mu)$
   		\end{tabular}
   		& \begin{tabular}
   			{@{}l@{}}
   			$\lesssim \e^{-{2p(p-1)}/{\gamma_S(p,n+\mu)}-\delta}$
   			\\ \, if
   			$
   			\begin{cases}
   				n=1, \tfrac{2}{3} \le \mu < \tfrac{4}{3}
   				\\
   				n=1, 0<\mu<\tfrac{2}{3}, p \ge \tfrac{2}{\mu}
   				\\
   				n\ge2, p>p_S(n+2+\mu)
   			\end{cases}
   			$
			\vspace{1mm}
			\\
			$\lesssim \e^{-\frac{2(p-1)}{\mu} - \delta}$
			\\ \, if $n=1, 0<\mu<\tfrac{2}{3}, p<\tfrac{2}{\mu} $
			\vspace{1mm}
			\\
			$\lesssim \e^{-1- \delta}$
			\\ \, if $n\ge2, p<p_S(n+2+\mu) $
			\vspace{1mm}
			\\
			$\lesssim \exp(C\e^{-p(p-1)})$
			\\ \, if $p=p_S(n+\mu)$.
   		\end{tabular}
   		
   		\\\midrule
   		
   		\begin{tabular}
   			{@{}l@{}}
   			Tu, Lin \\ \cite{TL17,TL19}
   		\end{tabular}
   		& \begin{tabular}
   			{@{}l@{}}
   			$n\ge2$
   		\end{tabular}
   		& \begin{tabular}
   			{@{}l@{}}
   			$\mu>0$ \\ $0<\mu< \frac{n^2+n+2}{n+2} $
   		\end{tabular}
   		& \begin{tabular}
   			{@{}l@{}}
   			$1 < p<p_S(n+\mu)$ 
   			\\ $p=p_S(n+\mu)$
   		\end{tabular}
   		& \begin{tabular}
   			{@{}l@{}}
   			$\lesssim \e^{-{2p(p-1)}/{\gamma_S(p,n+\mu)}}$ \\ $\lesssim \exp(C\e^{-p(p-1)})$
   		\end{tabular}
		
		\\
		\bottomrule
	\end{tabular}

\end{adjustbox}
\end{center}

Observe that the special case $\mu=2$ was widely studied, starting from D'Abbicco, Lucente and Reissig \cite{DLR15}. The reason is that, if we exploit the Liouville transform
\begin{equation*}
	v(x,t) := (1+t)^{\mu/2} u(x,t)
\end{equation*}
in problem \eqref{eq:damped_b=1}, it turns out to be
\begin{equation*}
\left\{
\begin{aligned}
& v_{tt} - \Delta v + \frac{\mu(2-\mu)}{4(1+t)^{2}} v =  \frac{|v|^p}{(1+t)^{\mu(p-1)/2}}, \quad\text{in $\R^n\times (0,T)$},
\\
& v(x,0)=\e f(x), \quad v_t(x,0)=\e \left\{ \frac{\mu}{2} f(x) + g(x) \right\}, \quad x\in\R^n.
\end{aligned}
\right.
\end{equation*}
For $\mu=2$ the damping term disappears, making the analysis more manageable and related to the undamped wave equation.
From the works \cite{DL15,DLR15,IS18,Pal19-mmas,wak14} is now clear that the critical exponent for $\mu=2$ is $p_{crit}=\max\{p_F(n),p_S(n+2)\}$, with the lifespan estimates stated in low dimensions $n\le3$ by the works \cite{IKTW19-arxiv,KS19,KTW19,wakasa16}.

When $\mu\neq2$, it was observed that for small $\mu$ the problem is wave-like in the critical exponent and in the lifespan estimates, whereas it is heat-like for larger $\mu$. However, the exact threshold was still unclear. We conjecture, in accordance with Remarks 1.2 and 1.4 in \cite{IS18}, that the threshold value should be
\begin{equation*}
	\mu_* \equiv \mu_*(n) := \frac{n^2+n+2}{n+2},
\end{equation*}
and that the critical exponent is
\begin{equation}\label{eq:pmu}
	p_{crit} = p_{\mu}(n) :=
	\max\{p_F(n-[\mu-1]_-), p_S(n+\mu)\} =
	\begin{cases}
	p_S(n+\mu) & \text{if $0\le\mu<\mu_*$,}
	\\
	p_F(n) & \text{if $\mu \ge \mu_*$.}
	\end{cases}
\end{equation}
Here and in the following, $[x]_{\pm} = \tfrac{|x| \pm x}{2}$ indicates the positive and negative part functions respectively.

The blow-up part of this conjecture has already been proved, combining \cite{wak14} and \cite{IS18}. In our next theorem, which is a straightforward corollary of Theorem\til\ref{thm1}, we reconfirm the blow-up range and we give cleaner estimates for the lifespan in the subcritical case, obtaining improvements mainly in the $1$-dimensional case (see Remark\til\ref{rem:impr_massless}). We refer to Figure\til\ref{fig1} for a graphic representation of the results below.

\begin{theorem}\label{cor1}
	Let $\mu\ge0$ and $1< p < p_{\mu}(n)$, with $p_\mu(n)$ defined in \eqref{eq:pmu}.
	%
	%
	Assume that $f \in H^1(\R^n)$,
	$g \in L^2(\R^n)$ and
	\begin{equation*}
	f \ge 0, 
	\quad
	[\mu-1]_{+} f(x) + g(x) > 0. 
	\end{equation*}
	Suppose that $u$ is an energy solution of \eqref{eq:damped_b=1} on $[0,T)$ that satisfies
	\begin{equation*}
	\supp u \subset\{(x,t)\in\R^n\times[0,\infty) \colon |x|\le t+R\}
	\end{equation*}
	with some $R\ge1$.
	
	Then, there exists a constant $\e_1=\e_1(f,g,\mu,p, R)>0$
	such that the blow-up time $T_\e$ of problem \eqref{eq:damped_b=1}, for $0<\e\le\e_1$, has to satisfy:
	\begin{itemize}
		\item if $0 \le \mu<\mu_*$, then
			\begin{equation*}
			T_{\e} \lesssim
				\left\{
				\begin{aligned}
				& \e^{-(p-1)/\gamma_F(p,n-[\mu-1]_-)}
				&\quad&\text{if $1 < p \le \frac{2}{n-|\mu-1|}$,}
				\\
				& \e^{-2p(p-1)/\gamma_S(p,n+\mu)}
				&\quad&\text{if $\frac{2}{n-|\mu-1|} < p < p_\mu(n)$;}
				\end{aligned}
				\right.
			\end{equation*}
		
		\item if $\mu \ge \mu_*$, then
		\begin{equation*}
		T_{\e} \lesssim
		\e^{-(p-1)/\gamma_F(p,n)}
		=
		\e^{-\left[ 2/(p-1) -n \right]^{-1}}.
		\end{equation*}
		
	\end{itemize}
	
	Moreover, if $\mu=n=1$ and $1<p\le 2$ the estimate for $T_\e$ is improved by
	\begin{equation*}
	T_{\e} \lesssim
	\phi_0(\e)
	\end{equation*}
	where $\phi_0 \equiv \phi_0(\e)$ is the solution of
	\begin{equation*}
	\e \phi_0^{\frac{2}{p-1}-1}
	\ln(1+\phi_0) = 1.
	\end{equation*}
	
\end{theorem}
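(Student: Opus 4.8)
The plan is to read Theorem~\ref{cor1} off from the main Theorem~\ref{thm1} by specializing the mass coefficient to $\mu_2=0$ (and writing $\mu_1=\mu$), so that \eqref{eq:main_problem} becomes exactly \eqref{eq:damped_b=1}. Theorem~\ref{thm1} treats the two-parameter operator $\partial_t^2-\Delta+\frac{\mu_1}{1+t}\partial_t+\frac{\mu_2}{(1+t)^2}$, whose heat-like versus wave-like dichotomy is governed by a discriminant-type quantity attached to the effective mass of the Liouville transform $v=(1+t)^{\mu_1/2}u$; with $\mu_2=0$ this quantity reduces to $|\mu-1|$. The first thing to check is that the positivity hypotheses of Theorem~\ref{thm1} specialize to $f\ge 0$ and $[\mu-1]_+f+g>0$, and that the support condition $\supp u\subset\{|x|\le t+R\}$ is inherited unchanged. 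Then the ``transition surfaces'' of Theorem~\ref{thm1}, which live in $(p,\mu_1,\mu_2)$-space, degenerate to the single ``transition curve'' $p=\tfrac{2}{n-|\mu-1|}$ in the $(p,\mu)$-plane, while the effective dimensions collapse to $n-[\mu-1]_-$ for the Fujita branch and $n+\mu$ for the Strauss branch. Matching these substitutions term by term is pure bookkeeping, which is what makes the corollary ``straightforward''.

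The real content lives inside Theorem~\ref{thm1} and is powered by the Kato-type lemma of Section~\ref{sec:kato}, applied to two competing functionals; I sketch the mechanism in the specialized situation $\mu_2=0$. The heat-like estimate comes from $F_0(t):=\ints u(x,t)\,dx$: integrating the equation over $\R^n$ and using finite propagation speed gives $\frac{d}{dt}\big[(1+t)^\mu F_0'(t)\big]=(1+t)^\mu\ints|u|^p\,dx\ge 0$, and Jensen's inequality on the support, whose volume is $\lesssim(t+R)^n$, turns this into a closed integral inequality for $F_0$ that the lemma converts into the exponent $(p-1)/\gamma_F(p,n-[\mu-1]_-)$. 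The wave-like estimate comes from a functional $F_1(t):=\ints u(x,t)\,\psi(x,t)\,dx$ built from a Yordanov--Zhang type weight adapted to the damping (essentially $\phi_1(x)$ solving $\Delta\phi_1=\phi_1$ times a suitable power of $(1+t)$), whose $(t+R)^{n+\mu}$-type weight is responsible for the shift $n\mapsto n+\mu$ and yields the exponent $2p(p-1)/\gamma_S(p,n+\mu)$. Since blow-up by either mechanism forces blow-up, the lifespan upper bound is the minimum of the two candidate bounds; comparing the two exponents is exactly the equation $\gamma_S(p,n+\mu)=2p\,\gamma_F(p,n-[\mu-1]_-)$, whose relevant root is $p=\tfrac{2}{n-|\mu-1|}$, pinning the transition. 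For $\mu\ge\mu_*$ the Fujita branch always dominates, leaving the single estimate $\e^{-(p-1)/\gamma_F(p,n)}$.

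The sharper one-dimensional statement for $\mu=n=1$, $1<p\le 2$ requires a separate argument, and this is where I expect the main obstacle. Here $n-|\mu-1|=1$, so the transition sits exactly at $p=2$, and along the entire heat-like branch the combination of the volume weight $(t+R)^n=(t+R)$ with the damping weight $(1+t)^\mu=(1+t)$ is borderline, so a naive Jensen step loses a logarithm. I would therefore run the iteration (or a refined version of the Kato-type lemma) while carefully retaining the logarithmic factor, which leads to a defining relation of the shape $\e\,\phi_0^{\frac{2}{p-1}-1}\ln(1+\phi_0)=1$; its solution $\phi_0(\e)$ is strictly smaller than the bare power $\e^{-(p-1)/\gamma_F(p,1)}$, giving the claimed improvement. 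The delicate points are to confirm that the logarithm genuinely survives the iteration instead of being swallowed by constants, and that this borderline phenomenon occurs precisely at $\mu=n=1$; both reduce to a careful treatment of the critical Fujita-type slicing in the lowest dimension.
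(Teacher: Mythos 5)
Your proposal matches the paper exactly: Theorem~\ref{cor1} is obtained there by specializing Theorem~\ref{thm1} to $\mu_2=0$, under which $\sqrt{\delta}=|\mu-1|$, $h=[\mu-1]_+f+g$, the Fujita branch becomes $p_F(n-[\mu-1]_-)$, the Strauss branch $p_S(n+\mu)$, the transition curve $p=\tfrac{2}{n-|\mu-1|}$, and the logarithmic factor $\ln(1+\phi)^{1-\sgn\delta}$ survives precisely when $\delta=0$ and $n-2<0<n-d_*$, i.e.\ $\mu=n=1$, giving $\phi_0$. Your sketch of the underlying mechanism of Theorem~\ref{thm1} (the functionals $F_0$ and $F_1$, the Kato-type iteration, and taking the minimum of the two competing bounds) also agrees with Section~\ref{sec:proof}.
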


\begin{remark}
	Note that, if $n\ge3$ and $0\le \mu < n-1$, we can write the lifespan estimates in Theorem\til\ref{cor1} explicitly as
	\begin{equation*}
	T_{\e} \lesssim
	\left\{
	\begin{aligned}
	& \e^{-2p(p-1)/\gamma_S(p,n+\mu)}
	&&\text{
		\begin{tabular}
		{@{}l@{}}
		if $0\le\mu\le n-1$ or
		\\
		if $n-1<\mu<\mu_*$ and $\frac{2}{n-\mu+1} < p < p_\mu(n)$,
		\end{tabular}
	}
	\\
	& \e^{-(p-1)/\gamma_F(p,n)}
	&&\text{
		\begin{tabular}
		{@{}l@{}}
		if $n-1<\mu<\mu_*$ and $1 < p \le \frac{2}{n-\mu+1}$.
		\end{tabular}
	}
	\end{aligned}
	\right.
	\end{equation*}
\end{remark}

\begin{remark}\label{rem:impr_massless}
	Comparing the lifespan estimates in Theorem\til\ref{cor1} with the known results summarized in the above table \lq\lq Blow-up in finite time for $\beta=1$'', we remark that the heat-like estimates for $n\ge1$ were already proved by Wakasugi \cite{wak14-thesis}, whereas the wave-like ones for $n\ge2$ by Tu and Lin \cite{TL17}. The wave-like estimates for $n=1$ were almost obtained by Ikeda and Sobajima \cite{IS18} for $p_F(n) \le p < p_S(n+\mu)$, with a loss in the exponent given by a constant $\delta>0$.
	
	Hence our improvements are given by the wave-like estimates if $n=1$ and by the logarithmic gain $T_\e \lesssim \phi_0(\e)$ if $n=\mu=1$ and $1<p\le 2$.
	Moreover, about the wave-like estimates for $n\ge2$, in \cite{TL17} the initial data are supposed to be non-negative, whereas our conditions on the initial data are less restrictive.
	
	Anyway, our approach is different and based on an iteration argument rather than on a test function method.
\end{remark}

\begin{remark}\label{rem:subcrit-conj-mu=0}
	We conjecture that the lifespan estimates in Theorem\til\ref{cor1} are indeed optimal, except on the \lq\lq\emph{transition curve}'' (in the $(p,\mu)$-plane) from the wave-like to the heat-like zone, given by
	\begin{equation*}
		p=\frac{2}{n-|\mu-1|}
		\quad
		\text{for $0\le\mu\le\mu_*$ and $1<p \le p_\mu(n)$.}
	\end{equation*}
	On this line, the identity
	\begin{equation*}
		2p \, \gamma_F(p,n-[\mu-1]_-) = \gamma_S(p,n+\mu)
	\end{equation*}
	holds true and here we expect a logarithmic gain, as already obtained for the case $p=2$, $\mu= n=1$ in the previous theorem, and for the case $n=p=2$, $\mu=0$ for the wave equation (see Subsection\til\ref{sub:HvsW}). As we see from \cite{IKTW19-arxiv,KS19,KTW19,wakasa16} the conjecture holds true if $\mu=2$ and $n\le3$.
\end{remark}

\begin{remark}\label{rem:crit-conj-mu=0}
	In this work we do not treat the critical case, but, to conclude our prospectus, it is natural to conjecture that
	\begin{equation*}
		T_\e \sim
		\left\{
		\begin{aligned}
		& \exp\left( C \e^{-p(p-1)}\right)
		&&\text{if $0\le \mu < \mu_*$ and $p=p_\mu(n)=p_S(n+\mu)$,}
		\\
		& \exp\left( C \e^{-(p-1)}\right)
		&&\text{if $\mu > \mu_*$ and $p=p_\mu(n)=p_F(n)$,}
		\end{aligned}
		\right.
	\end{equation*}
	for some constant $C>0$. We refer to \cite{IS18,TL19} for the wave-like lifespan estimate from above in the critical case and to \cite{IKTW19-arxiv,KS19,KTW19,wakasa16} for the proof of the conjecture if $\mu=2$ and $n=1,3$.
	
	However, we expect a different behaviour if $\mu=\mu_*$ and $p=p_{\mu_*}(n)$, that is when the transition curve from Remark\til\ref{rem:subcrit-conj-mu=0} intersects the blow-up curve. This expectation is  motivated from \cite{IKTW19-arxiv}, where the authors prove for $n=\mu=\mu_*=p_F(2)=p_S(4)=2$ that $T_\e \sim \exp(C \e^{-1/2})$, which is neither a wave-like critical lifespan, nor a heat-like one.
\end{remark}

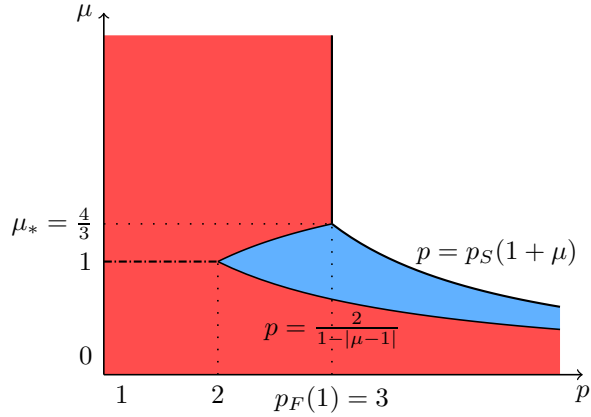
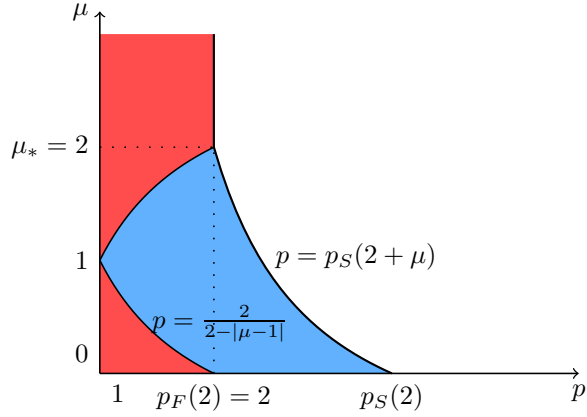
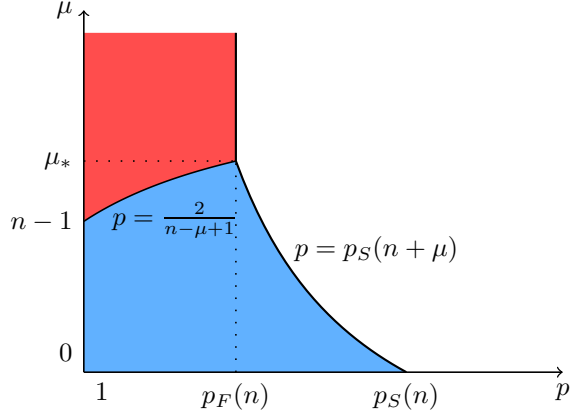
\begin{figure}
	\centering
	\subfloat[][Case $n=1$.]{
		\begin{tikzpicture}[scale=1.5, semithick]
		
		\begin{scope}[font=\footnotesize]
		
		\fill[wavecol]
		plot [domain=5:3] ( \x-1, {2*(\x+1)/(\x-1)/\x} )
		-- plot [domain=3:2] ( \x-1, {2-2/\x} )
		-- plot [domain=2:5] ( \x-1, {2/\x} )
		-- cycle;
		
		\fill[heatcol]
		(4,0) -- (0,0) -- (0,3) -- (2,3) -- (2,4/3)
		-- plot [domain=3:2] ( \x-1, {2-2/\x} )
		-- plot [domain=2:5] ( \x-1, {2/\x} )
		-- cycle;
		
		
		
		\draw[loosely dotted] (0,4/3) node [left] {$\mu_*=\frac{4}{3}$} -- (2,4/3);
		
		\draw[loosely dotted] (2,0) node [below] {$p_F(1)=3$} -- (2,4/3);
		
		\draw[loosely dotted] (1,0) node [below] {$2$} -- (1,1);
		
		\draw[thick] (2,4/3) -- (2,3);
		
		\draw[densely dash dot, thick] (0,1) node [left] {$1$} -- (1,1);
		
		\draw[thick]
		plot [domain=3:5] ( \x-1, {2*(\x+1)/(\x-1)/\x} )
		;	
		
		\draw
		plot[domain=2:3] ( \x-1, {2-2/\x} );	
		
		\draw
		plot [domain=2:5] ( \x-1, {2/\x} )
		;

		
		
		\node [above left] at (0,0) {$0$};
		\node [below right] at (0,0) {$1$};
		
		\draw[->]  (0,0) -- (0,3.2) node [left] {$\mu$};
		\draw[->]  (0,0) -- (4.2,0) node [below] {$p$};

		\node [below] at (3.45,1.3) {$p=p_S(1+\mu)$};
		\node [below left] at (2.7,0.65) {$p=\tfrac{2}{1-|\mu-1|}$};
		
		
		\node[left,opacity=0] at (0,0) {$\mu_*=\frac{4}{3}$};
		\node[left,opacity=0] at (0,0) {$\mu_*=2$};
		\node[left,opacity=0] at (0,0) {$n-1$};
		
		\end{scope}
		\end{tikzpicture}
	}
	\quad
	\subfloat[][Case $n=2$.]{
		\begin{tikzpicture}[scale=1.5, semithick]
		
		\begin{scope}[font=\footnotesize]
		
		\fill[wavecol]
		plot [domain=(3+sqrt(17))/2:2] ( \x-1, {(-\x*\x+3*\x+2)/(\x*\x-\x)} )
		-- plot [domain=2:1] ( \x-1, {3-2/\x} )
		-- plot [domain=1:2] ( \x-1, {-1+2/\x} )
		-- cycle;

		\fill[heatcol]
		(0,3) -- (1,3) -- (1,2)
		-- plot [domain=2:1] ( \x-1, {3-2/\x} )
		-- cycle;

		\fill[heatcol]
		(0,0) -- (0,1)
		-- plot [domain=1:2] ( \x-1, {-1+2/\x} )
		-- cycle;
		
		
		
		\draw[loosely dotted] (0,2) node [left] {$\mu_*=2$} -- (1,2);
		
		\draw[loosely dotted] (1,0) node [below] {$p_F(2)=2$} -- (1,2);
		
		\draw[thick] (1,2) -- (1,3);
		
		\draw[thick]
		plot [domain=2:(3+sqrt(17))/2] ( \x-1, {(-\x*\x+3*\x+2)/(\x*\x-\x)} );	
		
		\draw
		plot [domain=1:2] ( \x-1, {3-2/\x} );	
		
		\draw
		plot [domain=1:2] ( \x-1, {-1+2/\x} );

		\node [left] at (0,1) {$1$};
		
		\node [below] at (2.56155 , 0)
		{$p_S(2)$};

		
		\node at (0,0) {};
		\node [above left] at (0,0) {$0$};
		\node [below right] at (0,0) {$1$};
		
		\draw[->]  (0,0) -- (0,3.2) node [left] {$\mu$};
		\draw[->]  (0,0) -- (4.2,0) node [below] {$p$};

		\node [below] at (2.25,1.25) {$p=p_S(2+\mu)$};
		\node [below left] at (1.75,0.7) {$p=\tfrac{2}{2-|\mu-1|}$};
		
		\node[left,opacity=0] at (0,0) {$\mu_*=\frac{4}{3}$};
		\node[left,opacity=0] at (0,0) {$\mu_*=2$};
		\node[left,opacity=0] at (0,0) {$n-1$};
		
		\end{scope}
		\end{tikzpicture}
	}
	\quad
	\subfloat[][Case $n\ge3$.]{
		\begin{tikzpicture}[scale=1.5, semithick]
		
		\begin{scope}[font=\footnotesize]
		
		\fill[wavecol]
		(0,0)
		-- plot [domain=1:1+2/3] ( \x*2-2, {(4-2/\x)*2/3} )
		-- plot [domain=1+2/3:1+sqrt(32)/4]
		( \x*2-2, {((-2*\x*\x+4*\x+2)/(\x*\x-\x))*2/3} )		
		-- cycle;

		\fill[heatcol]
		(2/3*2,3) -- (0,3) -- (0,2/2)
		-- plot [domain=1:1+2/3] ( \x*2-2, {(4-2/\x)*2/3} )
		-- cycle;
		

		
		\draw[loosely dotted] (0,1*2/3+9/5*2/3) node [left] {$\mu_*$} -- (2/3*2,1*2/3+9/5*2/3);
		
		\draw[loosely dotted] (2/3*2,0) node [below] {$p_F(n)$} -- (2/3*2,1*2/3+9/5*2/3);
		
		\draw[thick] (2/3*2,1*2/3+9/5*2/3) -- (2/3*2,3);
		
		\draw[thick]
		plot [domain=1+2/3:1+sqrt(32)/4]
		( \x*2-2, {((-2*\x*\x+4*\x+2)/(\x*\x-\x))*2/3} );	
		
		\draw
		plot [domain=1:1+2/3] ( \x*2-2, {(4-2/\x)*2/3} );

		\node [left] at (0,2*2/3) {$n-1$};
		
		\node [below] at (1.41421*2, 0) {$p_S(n)$};

		
		\node at (0,0) {};
		\node [above left] at (0,0) {$0$};
		\node [below right] at (0,0) {$1$};
		
		\draw[->]  (0,0) -- (0,3.2) node [left] {$\mu$};
		\draw[->]  (0,0) -- (4.2,0) node [below] {$p$};

		\node [above right] at (1.75,0.65*2*2/3) {$p=p_S(n+\mu)$};
		\node [below] at (0.8,1.2*2*2/3) {$p=\tfrac{2}{n-\mu+1}$};
		
		\node[left,opacity=0] at (0,0) {$\mu_*=\frac{4}{3}$};
		\node[left,opacity=0] at (0,0) {$\mu_*=2$};
		\node[left,opacity=0] at (0,0) {$n-1$};
		
		\end{scope}
		\end{tikzpicture}
	}
	\caption{In this figure we collect the results from Theorem \ref{cor1}. If $(p,\mu)$ is in the blue area, we have that $T_\e \lesssim \e^{-2p(p-1)/\gamma_S(p,n+\mu)}$ and hence the lifespan estimate is wave-like.
		Otherwise, if $(p,\mu)$ is in the red area, then $T_\e \lesssim \e^{-(p-1)/\gamma_F(p,n-[\mu-1]_-)}$ and the lifespan estimate is heat-like.
		In the case $n=1$, the dash-dotted line given by $\mu=1$, $1<p\le 2$ highlights the improvement $T_\e \lesssim \phi_0(\e)$.}
	\label{fig1}
\end{figure}


\subsection{Wave equation with scale-invariant damping and mass}

Finally, we return to our main problem \eqref{eq:main_problem}. The scale-invariant damped and massive wave equation was studied by A. Palmieri as object of his doctoral dissertation \cite{Pal18-thesis}, under the supervision of M. Reissig. However, as far as we know, the research of the lifespan estimates in case of blow-up is still underdeveloped.

A key value for the study of this problem is
\begin{equation*}
\delta \equiv \delta(\mu_1,\mu_2) := (\mu_1-1)^2-4\mu_2,
\end{equation*}
which, roughly speaking, quantify the interaction between the damping and the mass term. Indeed, if $\delta\ge0$, the damping term is predominant and we observe again a competition between the wave-like and heat-like behaviours. In particular, the critical exponent seems to be wave-like for small positive values of $\delta$, while it is heat-like for large ones.
If on the contrary $\delta<0$, the mass term has more influence and the equation becomes of Klein-Gordon type. To see this, apply again the Liouville transform $v(x,t) := (1+t)^{\mu_1/2} u(x,t)$ to problem \eqref{eq:main_problem}, which therefore becomes
\begin{equation}\label{liouville}
	\left\{
	\begin{aligned}
	& v_{tt} - \Delta v + \frac{(1-\delta)/4}{(1+t)^{2}} v =  \frac{|v|^p}{(1+t)^{\mu_1(p-1)/2}}, \quad\text{in $\R^n\times (0,T)$}, \\
	& v(x,0)=\e f(x), \quad v_t(x,0)=\e \left\{ \frac{\mu_1}{2} f(x) + g(x) \right\}, \quad x\in\R^n.
	\end{aligned}
	\right.
\end{equation}
In the following, we will consider only the case $\delta\ge0$.

Let us start by collecting some known results.
From \cite{NPR17,Pal18,PR18}, we know that for $\mu_1,\mu_2>0$ and $\delta\ge(n+1)^2$ the critical exponent for problem \eqref{eq:main_problem} is the shifted Fujita exponent
\begin{equation*}
	p_{crit} = p_F\left( n+\frac{\mu_1-1-\sqrt{\delta}}{2} \right).
\end{equation*}
On the contrary, from \cite{Pal19-mmas,Pal19-newtools}, in the special case $\delta=1$ and under radial symmetric assumptions for $n\ge3$, Palmieri proved that the critical exponent is
\begin{equation*}
	p_{crit} = p_S\left( n+\mu_1 \right).
\end{equation*}
The case $\delta=1$ is clearly the analogous of the case $\mu=2$ for the scale-invariant damped wave equation without mass: under this assumption
we see from \eqref{liouville} that the equation can be transformed into a wave equation without damping and mass and with a suitable nonlinearity.
In \cite{PR19-jde}, Palmieri and Reissig proved, by using the Kato's lemma and Yagdjian integral transform, a blow-up result for $\delta \in (0,1] $, and showed a competition between the shifted Fujita and Strauss exponents. Indeed, they obtained the blow-up result for
\begin{equation*}
	1 < p \le \max\left\{ p_F\left(n+\frac{\mu_1-1-\sqrt{\delta}}{2}\right) , p_S(n+\mu_1) \right\}
\end{equation*}
except for the critical case $p=p_S(n+\mu_1)$ in dimension $n=1$.
Finally, Palmieri and Tu in \cite{PT19}, under suitable sign assumption on the initial data and for $\mu_1,\mu_2, \delta$ non-negative, established a blow-up result for $1<p \le p_S(n+\mu_1)$ and furthermore the following lifespan estimates:
\begin{equation*}
	T_\e \lesssim
	\left\{
	\begin{aligned}
	& \e^{-2p(p-1)/\gamma_S(p,n+\mu_1)}
	&& \text{if $1<p<p_S(n+\mu_1)$,}
	\\
	& \exp(C \e^{-p(p-1)})
	&& \text{if $p=p_S(n+\mu_1)$ and $p>\frac{2}{n-\sqrt{\delta}}$.}
	\end{aligned}	
	\right.
\end{equation*}
They used an iteration argument based on the technique of double multiplier for the subcritical case and a version of test function method developed by Ikeda and Sobajima \cite{IS18} for the critical case.
Of course, we refer to the works by Palmieri and to his doctoral thesis for a more detailed background.

We present now our main result, concerning the blow-up of \eqref{eq:main_problem} for $\mu_1,\mu_2\in\R$ and $\delta\ge0$ and the upper bound for the lifespan estimates.

Firstly, let us introduce the value
	\begin{equation}
	\label{def:d*1}
	d_*(\nu) :=	
	\left\{
	\begin{aligned}
	&\frac{1}{2} \left(-1- \nu + \sqrt{\nu^2+10\nu-7} \right)
	&\quad\text{if $\nu > 1$,}
	\\
	&0
	&\quad\text{if $\nu \le 1$,}
	\end{aligned}
	\right.
	\end{equation}
	and set for the simplicity
	\begin{equation}
	\label{def:d*2}
		d_* := d_*(n+\mu_1) \in [0,2).
	\end{equation}
	Observe that, if $n+\mu_1>1$, then
	\begin{equation}
	\label{d*rel}
	\begin{split}
	\sqrt{\delta} = n - d_*
	& \Longleftrightarrow
	\gamma_S(p,n+\mu_1) = 2\, \gamma_F \left(p, n + \frac{\mu_1 -1- \sqrt{\delta}}{2}\right) =0 \\
	& \Longleftrightarrow
	p_S(n+\mu_1) = p_F \left( n +\frac{\mu_1 -1 - \sqrt{\delta}}{2}\right) = \frac{2}{n - \sqrt{\delta}}.
	\end{split}
	\end{equation}
%
%
The following result holds.

\begin{theorem}\label{thm1}
	Let $\mu_1,\mu_2 \in\R$, $\delta\ge0$ and $1< p < p_{\mu_1,\delta}(n)$, with
	\begin{equation}
	\label{pch>0}
	p_{\mu_1,\delta}(n) :=
	\max\left\{
	p_F\left(n+\frac{\mu_1-1-\sqrt{\delta}}{2}\right), \,
	p_S\left(n+\mu_1\right) \right\}.
	\end{equation}
	Assume that $f \in H^1(\R^n)$,
	$g \in L^2(\R^n)$ and
	\begin{equation}
	\label{hpindata}
		f \ge 0, 
		\quad
		h > 0, 
		\quad\text{where}\quad
		h:=\frac{\mu_1-1+\sqrt{\delta}}{2}f+g.
	\end{equation}
	Suppose that $u$ is an energy solution of \eqref{eq:main_problem} on $[0,T)$ that satisfies
	\begin{equation}
	\label{support}
	\supp u \subset\{(x,t)\in\R^n\times[0,\infty) \colon |x|\le t+R\}
	\end{equation}
	with some $R\ge1$.
	
	Then, there exists a constant $\e_2=\e_2(f,g,\mu_1,\mu_2,n,p, R)>0$
	such that the blow-up time $T_\e$ of problem \eqref{eq:main_problem}, for $0<\e\le\e_2$, has to satisfy:
	\begin{itemize}
		\item if $\sqrt{\delta} \le n-2$, then
		\begin{equation*}
		T_{\e} \lesssim \e^{-2p(p-1)/\gamma_S(p,n+\mu_1)};
		\end{equation*}
		
		\item if $n-2 < \sqrt{\delta} < n-d_*(n+\mu_1)$, then
		\begin{equation*}
			T_\e \lesssim
			\left\{
			\begin{aligned}
			&\phi(\e)
			&&\text{if $1<p\le \frac{2}{n-\sqrt{\delta}}$,}
			\\
			&\e^{-2p(p-1)/\gamma_S(p,n+\mu_1)}
			&&\text{if $\frac{2}{n-\sqrt{\delta}} < p < p_{\mu_1,\delta}(n)$,}
			\end{aligned}
			\right.
		\end{equation*}
		where $\phi \equiv \phi(\e)$ is the solution of
		\begin{equation*}\label{def:phi}
		\e \phi^{\frac{\gamma_F\left(p,n+(\mu_1-1-\sqrt{\delta})/2\right)}{p-1}}
		\ln(1+\phi)^{1-\sgn\delta} = 1;
		\end{equation*}
		
		\item if $\sqrt{\delta} \ge n - d_*(n+\mu_1)$, then
		\begin{equation*}
		T_{\e} \lesssim \phi(\e).
		\end{equation*}
	\end{itemize}
	If in particular $\delta>0$, then		
	\begin{equation*}
	\phi(\e) = \e^{-(p-1)/\gamma_F(p,n+(\mu_1-1-\sqrt{\delta})/2)}
	=
	\e^{-\left[ 2/(p-1) - n - (\mu_1-1-\sqrt{\delta})/2 \right]^{-1}}.
	\end{equation*}
\end{theorem}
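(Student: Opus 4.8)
The plan is to bound $T_\e$ from above by exhibiting \emph{two} independent blow-up mechanisms for suitable space-averages of $u$: a \emph{heat-like} one producing the Fujita-type bound $\phi(\e)$, and a \emph{wave-like} one producing the Strauss-type bound $\e^{-2p(p-1)/\gamma_S(p,n+\mu_1)}$. The trichotomy in the statement then follows by keeping, for each $p$, the smaller (hence binding) of the two times and invoking the algebraic identity \eqref{d*rel}. In both mechanisms the strategy is the same: reduce a functional inequality, closed by H\"older's inequality on the light cone \eqref{support}, to the Kato's type lemma in integral form of Section~\ref{sec:kato}.

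For the heat-like bound, I define $F_0(t):=\ints u(x,t)\,dx$. Testing \eqref{eq:energy_solution} against a cutoff equal to $1$ on $\{|x|\le t+R\}$ gives the Cauchy--Euler identity
\[
F_0''(t)+\frac{\mu_1}{1+t}F_0'(t)+\frac{\mu_2}{(1+t)^2}F_0(t)=\ints|u(x,t)|^p\,dx\ge0,
\]
whose homogeneous solutions are $(1+t)^{\lambda_\pm}$ with $\lambda_\pm=\tfrac12(1-\mu_1\pm\sqrt{\delta})$. Representing $F_0$ through the causal Green's function (positive for $t>s$) and computing the coefficient of the dominant mode $(1+t)^{\lambda_+}$, one finds it equals $\tfrac{\e}{\sqrt\delta}\ints h\,dx>0$ by \eqref{hpindata}; together with $f\ge0$ this seeds a positivity bootstrap giving $F_0(t)\gtrsim\e(1+t)^{\lambda_+}$. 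Since $\bigl|\{x\colon|x|\le t+R\}\bigr|\lesssim(1+t)^n$, H\"older yields $\ints|u|^p\,dx\gtrsim F_0(t)^p(1+t)^{-n(p-1)}$, and feeding this back into the Green's representation produces a self-improving integral inequality for $F_0$. The effective Fujita dimension that emerges is $n-\lambda_+=n+(\mu_1-1-\sqrt{\delta})/2$, precisely because the weight $(1+s)^{-\lambda_+}$ of the kernel shifts the bookkeeping; the Kato's type lemma then delivers $T_\e\lesssim\phi(\e)$ with the stated $\phi$. When $\delta=0$ the roots $\lambda_\pm$ coincide and the second homogeneous solution carries a factor $\ln(1+t)$, which is exactly what generates the logarithmic factor $\ln(1+\phi)^{1-\sgn\delta}$.

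For the wave-like bound, I introduce the Yordanov--Zhang profile $\Phi(x):=\int_{S^{n-1}}e^{x\cdot\omega}\,d\omega$ (so that $\Delta\Phi=\Phi$) together with a time weight $\rho(t)$ chosen so that $\psi(x,t):=\rho(t)\Phi(x)$ solves the linear part of the adjoint problem associated to \eqref{eq:main_problem}; equivalently, one first passes to the Klein--Gordon-type problem \eqref{liouville} via the Liouville transform. The exponent of $\rho$ is dictated by the scale-invariant damping and is responsible for the shift $n\mapsto n+\mu_1$. Setting $F_1(t):=\ints u(x,t)\psi(x,t)\,dx$ and using $f\ge0$, $h>0$, one obtains a lower bound on $F_1$ and, via H\"older with weight $\psi^{p/(p-1)}$ and the asymptotics $\Phi(x)\sim c_n|x|^{-(n-1)/2}e^{|x|}$, the cone-measure factor that converts the iteration exponents into $\gamma_S(p,n+\mu_1)$. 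The Kato's type lemma then yields $T_\e\lesssim\e^{-2p(p-1)/\gamma_S(p,n+\mu_1)}$ for every $1<p<p_S(n+\mu_1)$.

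Finally, comparing the two times, the heat-like estimate is the smaller one exactly when $p\le 2/(n-\sqrt{\delta})$ and the wave-like one otherwise, the two exponents coinciding along the transition surface $p=2/(n-\sqrt{\delta})$ as encoded in \eqref{d*rel}; reading off the regimes $\sqrt{\delta}\le n-2$, $n-2<\sqrt{\delta}<n-d_*$, and $\sqrt{\delta}\ge n-d_*$ then reproduces the three cases of the statement, the first and third being the degenerate situations where one mechanism dominates on the whole range $1<p<p_{\mu_1,\delta}(n)$. I expect the main obstacle to be the construction and sharp control of the wave-like test function $\psi$ in the simultaneous presence of damping and mass: checking that the time weight $\rho$ and the spatial asymptotics of $\Phi$ combine to close the iteration at \emph{exactly} the Strauss exponent $p_S(n+\mu_1)$, and that this matches the heat-like branch on $p=2/(n-\sqrt{\delta})$, is the delicate point, alongside the double-root ($\delta=0$) logarithmic bookkeeping in the heat-like branch.
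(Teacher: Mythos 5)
Your proposal follows essentially the same strategy as the paper: the heat-like bound via the plain average $F_0(t)=\int_{\R^n}u\,dx$ and its Cauchy--Euler/Green's-function representation (with the $\ln$ factor at the double root $\delta=0$), the wave-like bound via a Yordanov--Zhang-weighted functional whose lower bound $\gtrsim\e(1+t)^{-\mu_1/2}$ (obtained through modified Bessel functions) is fed back into the iteration frame for $F_0$, and a final comparison of the two Kato-lemma lifespans along $p=2/(n-\sqrt{\delta})$ using \eqref{d*rel}. The only cosmetic difference is that the paper keeps the fixed test function $e^{-t}\phi_1(x)$ and solves the resulting modified Bessel ODE for $F_1$ itself, rather than building the Bessel-type time weight $\rho(t)$ into the test function (or passing through the Liouville transform) as you suggest.
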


Here and in the following, the sign function is defined as $\sgn x = \tfrac{|x|}{x}$ if $x\neq0$, whereas $\sgn x =0$ if $x=0$.

\begin{remark}
	We can write the exponent in \eqref{pch>0} explicitly as
	\begin{equation*}
	\begin{aligned}
	p_{\mu_1,\delta}(n) =
	\left\{
	\begin{aligned}
	& p_S\left(n+\mu_1\right)
	&\quad&\text{
		\begin{tabular}
		{@{}l@{}}
		if $n+\mu_1>1$, $\sqrt{\delta} \le n-d_*$,
		\end{tabular}
	}
	\\
	& p_F\left(n+\frac{\mu_1-1-\sqrt{\delta}}{2}\right)
	&\quad&\text{
		\begin{tabular}
		{@{}l@{}}
		if $n+\mu_1>1$, $n-d_* < \sqrt{\delta} < 2n+\mu_1-1$,
		\end{tabular}
	}
	\\
	& +\infty
	&\quad&\text{
		\begin{tabular}
		{@{}l@{}}
		if $n+\mu_1>1$, $\sqrt{\delta} \ge 2n+\mu_1-1$
		\\
		or if $n+\mu_1 \le 1$.
		\end{tabular}
	}
	\end{aligned}
	\right.
	\end{aligned}
	\end{equation*}
\end{remark}

\begin{remark}
	Note that, setting the mass coefficient $\mu_2=0$ and the damping coefficient $\mu_1=\mu>0$, then $\sqrt{\delta}=|\mu-1|$ and
	\begin{equation*}
	\sqrt{\delta} \le n-d_*(n+\mu)
	\Longleftrightarrow
	0< \mu \le \mu_*.
	\end{equation*}
	It is straightforward to check that, by imposing $\mu_2=0$, the results in Theorem\til\ref{thm1} coincide with those in Theorem\til\ref{cor1}.
\end{remark}

\begin{remark}\label{rem:conj1}
	Analogously as in Remark\til\ref{rem:subcrit-conj-mu=0}, we conjecture that $p_{\mu_1,\delta}(n)$ defined in \eqref{pch>0} is indeed the critical exponent and that the lifespan estimates presented in Theorem\til\ref{thm1} are optimal, except on the \lq\lq\emph{transition surface}''(in the $(p,\mu_1,\delta)$-space) defined by
	\begin{equation}\label{trans_p}
		p= \frac{2}{n-\sqrt{\delta}}
		\quad\text{for $n-2<\sqrt{\delta}<n-d_*(n+\mu_1)$ and $1<p \le p_{\mu_1,\delta}(n)$,}
	\end{equation}
	on which we expect a logarithmic gain.
	
	The exponent $p=\frac{2}{n-\sqrt{\delta}}$ already emerged in Palmieri and Tu \cite{PT19}, but as a technical condition. We underline that this exponent comes out to be the solution of the equation
	\begin{equation*}
	2p \, \gamma_F\left(p, n+\frac{\mu_1-1-\sqrt{\delta}}{2}\right) = \gamma_S(p,n+\mu_1)
	\end{equation*}
	when $n-2 < \sqrt{\delta} < n-d_*(n+\mu_1)$.
\end{remark}

\begin{remark}\label{rem:conj2}
	Similarly as in Remark\til\ref{rem:crit-conj-mu=0}, we expect that, if $p=p_{\mu_1,\delta}(n)$, then
	\begin{equation*}
		T_\e \sim
		\left\{
			\begin{aligned}
				& \exp\left( C \e^{-p(p-1)}\right)
				&&
				\text{
					\begin{tabular}
					{@{}l@{}}
					if $n+\mu_1>1$
					and $\sqrt{\delta}< n-d_*$,
					\end{tabular}
				}
				\\
				& \exp\left( C \e^{-(p-1)}\right)
				&&
				\text{
					\begin{tabular}
					{@{}l@{}}
					if $n+\mu_1>1$ 
					and $n-d_* < \sqrt{\delta}<2n+\mu_1-1$,
					\end{tabular}
				}
			\end{aligned}
		\right.
	\end{equation*}
	for some constant $C>0$. See \cite{PT19} for the proof of the wave-like upper bound of the lifespan estimate in the critical case. Moroever, if $\sqrt{\delta}=n-d_*(n+\mu_1)$ and $p=p_{\mu_1,\delta}(n)$, we expect a different lifespan estimate, as in the massless case.
\end{remark}

\subsection{Different lifespans for different initial conditions}
\label{subsec:h=0}

In Theorems\til\ref{cor1} \& \ref{thm1} we impose the condition on the initial data
\begin{equation*}\label{h}
	h = \frac{\mu_1-1+\sqrt{\delta}}{2} f + g >0.
\end{equation*}
One could ask if this is only a technical condition, but it turns out that this is not the case: if we impose $h=0$, the lifespan estimates change drastically. This phenomenon was recently taken in consideration also in the works by Imai, Kato, Takamura and Wakasa \cite{IKTW19-aa,IKTW19-arxiv,KTW19}.

Let us return to the wave equation
\begin{equation*}
\left\{
\begin{aligned}
& u_{tt} - \Delta u = |u|^p, \quad\text{in $\R^n\times (0,T)$}, \\
& u(x,0)=\e f(x), \quad u_t(x,0)=\e g(x), \quad x\in\R^n.
\end{aligned}
\right.
\end{equation*}
Since $\mu_1=\mu_2=0$, in this case the condition $h=0$ is equivalent to $g=0$. Indeed, under the assumption
\begin{equation*}
	\int_{\R^n} g(x) dx =0,
\end{equation*}
collecting the results from the works \cite{IKTW19-aa,	LZ14,Lin90,LS96,T15,TW11,Zhou92-cam,Zhou92-jde,Zhou93}, we have that, for $n\ge1$, the following lifespan estimates holds:
\begin{equation*}
	T_\e \sim
	\left\{
	\begin{aligned}	
	& \e^{-2p(p-1)/\gamma_S(p,n)}
	&\quad&\text{if $1<p<p_S(n)$,}
	\\
	& \exp\left(C \e^{-p(p-1)}\right)
	&\quad&\text{if $p=p_S(n)$,}
	\end{aligned}
	\right.
\end{equation*}
excluding the critical case $p=p_S(n)$ for $n\ge9$ and without radial symmetry assumptions. We refer to the Introduction by Imai, Kato, Takamura and Wakasa \cite{IKTW19-aa} for a detailed background on these results. What is interesting is the fact that now we observe always a wave-like lifespan. This is in contrast with the estimates presented in Subsection\til\ref{sub:HvsW}, where, under the assumption
\begin{equation*}
	\ints g(x)dx>0,
\end{equation*}
we have heat-like lifespans in low dimensions, more precisely if $n=1$ or if $n=2$ and $1<p\le 2$, with a logarithmic gain if $n=p=2$.

Let us consider now the Cauchy problem for the scale-invariant damped wave equation \eqref{eq:damped} with $\mu=2$, that is
\begin{equation*}
\left\{
\begin{aligned}
& u_{tt} - \Delta u + \frac{2}{1+t} u_t = |u|^p, \quad\text{in $\R^n\times (0,T)$}, \\
& u(x,0)=\e f(x), \quad u_t(x,0)=\e g(x), \quad x\in\R^n.
\end{aligned}
\right.
\end{equation*}
Since $\mu_1=2$ and $\mu_2=0$, the condition $h=0$ is equivalent to $f+g=0$. In low dimensions $n=1$ and $n=2$, Kato, Takamura and Wakasa \cite{KTW19} and Imai, Kato, Takamura and Wakasa \cite{IKTW19-arxiv} proved that, if the initial data satisfy
\begin{equation*}
	\ints \{ f(x) + g(x) \} dx =0,
\end{equation*}
then the lifespan estimates in $1$-dimensional case are
\begin{equation*}
	T_\e \sim
	\left\{
	\begin{aligned}
	& \e^{-2p(p-1)/\gamma_S(p,3)}
	&& \text{if $1<p<2$,}
	\\
	& b(\e)
	&& \text{if $p=2$,}
	\\
	& \e^{-p(p-1)/\gamma_F(p,1)}
	&& \text{if $2<p<p_F(1)$,}
	\\
	& \exp(C \e^{-p(p-1)})
	&& \text{if $p=p_F(1)=3$,}
	\end{aligned}
	\right.
\end{equation*}
where $b \equiv b(\e)$ satisfies the equation $\e^2 b \log(1+b)=1$, and in $2$-dimensional case are
\begin{equation*}
	T_\e \sim
	\left\{
	\begin{aligned}
	& \e^{-2p(p-1)/\gamma_S(p,4)}
	&& \text{if $1<p<p_F(1)=p_S(4)=2$,}
	\\
	& \exp(C \e^{-2/3})
	&& \text{if $p=p_F(2)=p_S(4)=2$.}
	\end{aligned}
	\right.
\end{equation*}
These estimates are greatly different from the ones presented in Subsection\til\ref{sub:damped}, which hold under the assumption 
\begin{equation*}
	\ints \{f(x)+g(x)\} \neq 0.
\end{equation*}
In dimension $n=1$, we have no more a heat-like behaviour, but a wave-like one appears for $p<2$, whereas for $p>2$ we have a mixed-like behaviour, accordingly with the notation introduced in Subsection\til\ref{sub:HvsW}. Indeed, in the latter case, even if the lifespan is related to the heat-like one, an additional $p$ appears. In dimension $n=2$, we have no more a heat-like behaviour, but a wave-like one. The strange exponent in the critical lifespan can be explained by the same phenomenon underlined in Remark\til\ref{rem:crit-conj-mu=0}.

We are ready to exhibit our results, which give upper lifespan estimate in the subcritical case when $h=0$. It is easy to see that our estimates coincide with the ones just showed above in the respective cases. Going on with the exposition followed until now, we will present firstly the particular massless case, then the more general one where also the mass is considered. For the simplicity, we will consider only non-negative damping coefficients.

Let us introduce the exponent
\begin{equation}
\label{def:p*}
p_* \equiv p_*(n+\mu_1,n-\sqrt{\delta}) :=
\left\{
\begin{aligned}
&1+\frac{n-\sqrt{\delta}+2}{n+\mu_1-1},
&\quad\text{if $n+\mu_1 \neq 1$,}
\\
& +\infty,
&\quad\text{if $n+\mu_1 = 1$,}
\end{aligned}
\right.
\end{equation}
and observe that, for $p>1$ and $n+\mu_1 \neq 1$,
\begin{equation}
\label{p*rel}
p=p_*
\Longleftrightarrow
\gamma_S(p,n+\mu_1) =
2\, \gamma_F\left(p,n+\frac{\mu_1-1-\sqrt{\delta}}{2}\right).
\end{equation}

The following results hold. See Figure\til\ref{fig2} for a graphic representation of the claim in Theorem\til\ref{cor2}.

\begin{theorem}\label{cor2}
	Let $\mu \ge 0$ and $1< p < p_{\mu}(n)$, with $p_{\mu}(n)$ as in Theorem \ref{cor1}.
	Assume that $f \in H^1(\R^n)$,
	$g \in L^2(\R^n)$ and
	\begin{equation*}
	f > 0, 
	\quad
	[\mu-1]_{+} f(x) + g(x) = 0. 
	\end{equation*}
	Suppose that $u$ is an energy solution of \eqref{eq:damped_b=1} on $[0,T)$ that satisfies
	\eqref{support}
	for some $R\ge1$.
	
	Then there exists a constant $\e_3=\e_3(f,g,\mu,p, R)>0$
	such that the blow-up time $T_\e$ of problem \eqref{eq:damped_b=1}, for $0<\e\le\e_3$, has to satisfy:
	\begin{itemize}
		\item if $0 \le \mu \le \mu_*$, then
		\begin{equation*}
		T_{\e} \lesssim
		\e^{-2p(p-1)/\gamma_S(p,n+\mu)};
		\end{equation*}
		
		\item if $\mu_* < \mu < n+3$, then
		\begin{equation*}
		T_{\e} \lesssim
		\left\{
		\begin{aligned}
		& \e^{-2p(p-1)/\gamma_S(p,n+\mu)},
		&\quad&\text{if $1<p<p_*$},
		\\
		& \sigma_0(\e),
		&\quad&\text{if $p=p_*$},
		\\
		& \e^{-p(p-1)/\gamma_F(p,n)},
		&\quad&\text{if $p_*<p<p_{\mu}(n)$},
		\end{aligned}
		\right.
		\end{equation*}
		where $\sigma_0 \equiv \sigma_0(\e)$ is the solution of
		\begin{equation*}
		\e^p \sigma_0^{\frac{2}{p-1}-n} \ln(1+\sigma_0) = 1
		\end{equation*}
		and
		\begin{equation*}
		p_* = 1 + \frac{n-\mu+3}{n+\mu-1};
		\end{equation*}
		
		\item if $\mu \ge n+3$, then
		\begin{equation*}
		T_{\e} \lesssim
		\e^{-p(p-1)/\gamma_F(p,n)}.
		\end{equation*}
	\end{itemize}
	
	Moreover, if $n=1$, $0<\mu<2$ and
	\begin{equation*}
	1< p < \frac{2}{1+|\mu-1|},
	\end{equation*}
	then the estimate for the blow-up time $T_\e$ is improved by
	\begin{equation*}
	T_{\e} \lesssim \e^{-(p-1)/\gamma_F\left(p, 1 + [\mu-1]_{+} \right)}.
	\end{equation*}
\end{theorem}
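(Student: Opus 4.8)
The plan is to run the same iteration scheme behind the Kato's type lemma of Section~\ref{sec:kato} that proves Theorem~\ref{thm1}, but to replace the positive functional used there by ones adapted to the degenerate data $h\equiv0$ (here $h=[\mu-1]_{+}f+g$). Since \eqref{eq:damped_b=1} has finite speed of propagation, the support condition \eqref{support} lets me test the weak formulation against functions that equal $1$ (resp.\ against the positive wave multiplier below) on the backward light cone. Testing against $1$, the spatial average $F_0(t):=\int_{\R^n}u(x,t)\,dx$ satisfies $\frac{d}{dt}\big[(1+t)^{\mu}F_0'(t)\big]=(1+t)^{\mu}\int_{\R^n}|u|^p\,dx$, while Hölder's inequality together with $\supp u(\cdot,t)\subset\{|x|\le t+R\}$ gives $\int_{\R^n}|u|^p\,dx\gtrsim (t+R)^{-n(p-1)}|F_0(t)|^{p}$. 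Solving the linear part with $F_0(0)=\e\int f$, $F_0'(0)=\e\int g$, the would-be constant leading term is proportional to $\int h$; because the hypothesis forces $h\equiv0$ (so $g=-[\mu-1]_{+}f$), this term cancels and one is left only with the weaker seed $F_0(t)\gtrsim \e\,(1+t)^{1-\mu}$ when $\mu>1$, and $F_0(t)\gtrsim\e$ when $0\le\mu\le1$. This degeneration of the seed is exactly what pushes the lifespan away from the heat-like estimate of Theorem~\ref{cor1} toward the wave-like and mixed-type ones.

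For the wave-like part I would use the exponential multiplier $\psi(x,t):=e^{-t}\varphi(x)$ with $\varphi(x)=\int_{S^{n-1}}e^{x\cdot\omega}\,d\omega>0$, so that $\Delta\varphi=\varphi$ and $F_1(t):=\int_{\R^n}u\,\psi\,dx$ obeys a second-order ODE with positive source $\int_{\R^n}|u|^{p}\psi\,dx$. Here the strict positivity $f>0$ guarantees $F_1(0)=\e\int f\varphi>0$, so this functional does not degenerate, and the light-cone estimate $\int_{\{|x|\le t+R\}}\psi\,dx\lesssim (t+R)^{(n-1)/2}$ yields $\int_{\R^n}|u|^p\psi\,dx\gtrsim (t+R)^{-(n-1)(p-1)/2}F_1(t)^p$. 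Feeding this into the Kato's type lemma produces the wave-like lifespan $T_\e\lesssim\e^{-2p(p-1)/\gamma_S(p,n+\mu)}$, with the damping entering as the shift $n\mapsto n+\mu$. The mixed estimate $T_\e\lesssim\e^{-p(p-1)/\gamma_F(p,n)}$ instead comes from combining the two functionals: the wave analysis first provides an amplified, $\e^{p}$-order lower bound for the source $\int_{\R^n}|u|^p\,dx$, which is then integrated through the damped ODE for $F_0$; the resulting polynomial growth carries the heat-like space exponent $\gamma_F(p,n)$ but the wave-like power $\e^{p}$, which accounts for the extra factor $p$ compared with the heat-like estimate of Theorem~\ref{cor1}.

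It then remains to decide, in each range of $\mu$, which estimate is binding. The two coincide precisely on the transition exponent $p=p_*$ of \eqref{def:p*}, since by \eqref{p*rel} one has $\gamma_S(p,n+\mu)=2\,\gamma_F(p,n)$ there (recall $n+(\mu-1-\sqrt{\delta})/2=n$ in the massless case with $\mu>1$), so that $\e^{-2p(p-1)/\gamma_S(p,n+\mu)}=\e^{-p(p-1)/\gamma_F(p,n)}$; on this borderline the governing time integral is logarithmically divergent, which is what produces the implicitly defined $\sigma_0(\e)$ solving $\e^{p}\sigma_0^{2/(p-1)-n}\ln(1+\sigma_0)=1$. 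For $0\le\mu\le\mu_*$ the wave estimate dominates over the whole subcritical range $1<p<p_\mu(n)=p_S(n+\mu)$; for $\mu\ge n+3$ one has $p_*\le1$, so the wave window is empty and the mixed estimate holds throughout; and for $\mu_*<\mu<n+3$ the two compete, giving the wave estimate for $1<p<p_*$, the logarithmic $\sigma_0(\e)$ at $p=p_*$, and the mixed one for $p_*<p<p_\mu(n)$.

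I expect the main difficulty to lie in this combined-functional bookkeeping: quantifying precisely how the wave lower bound on $\int_{\R^n}|u|^p\,dx$ seeds the averaged functional so as to reproduce the exact pair $(\e^{p},\gamma_F(p,n))$ in the mixed regime, and in checking that the borderline case $p=p_*$ yields exactly the logarithmic factor $\ln(1+\sigma_0)$ and not a spurious power. The final one-dimensional improvement is of a different nature: in $n=1$ the explicit representation of solutions of \eqref{eq:damped_b=1} provides sharper pointwise lower bounds than Hölder's inequality, and exploiting them (as in the improvement of Theorem~\ref{cor1}) upgrades the estimate for small $p$, namely $1<p<2/(1+|\mu-1|)$, to the heat-like $T_\e\lesssim\e^{-(p-1)/\gamma_F(p,1+[\mu-1]_{+})}$, which there is smaller than the wave-like bound.
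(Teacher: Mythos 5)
Your overall skeleton matches the paper's: the theorem is obtained there as the $\mu_2=0$ case of Theorem~\ref{thm:H=0}, using the averaged functional $F_0$, the Yordanov--Zhang weighted functional $F_1$, H\"older on the light cone, Lemma~\ref{kato}, and a final comparison of competing bounds; your regime bookkeeping (the role of $p_*$, the identity $\gamma_S(p,n+\mu)=2\gamma_F(p,n)$ there, the emptiness of the wave window for $\mu\ge n+3$) is also correct. However, two of the mechanisms you propose would fail as stated. The wave-like bound $T_\e\lesssim\e^{-2p(p-1)/\gamma_S(p,n+\mu)}$ cannot be produced by ``feeding'' the frame $\int|u|^p\psi\,dx\gtrsim t^{-(n-1)(p-1)/2}F_1(t)^p$ into the Kato lemma for $F_1$ alone: the equation for $F_1$ carries the first-order term $\bigl(2+\tfrac{\mu}{1+t}\bigr)F_1'$, so it does not integrate to a double-integral inequality of the form \eqref{hp2}; and even formally, with a seed $F_1\gtrsim\e(1+t)^{-\mu/2}$ and $b=(n-1)(p-1)/2$ the lemma's $\gamma=2[(p-1)a-b+2]$ is \emph{linear} in $p$ and cannot equal the quadratic $\gamma_S(p,n+\mu)$. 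In the paper both the wave-like and the mixed bounds come from the single combined mechanism you reserve only for the mixed case: one first proves the linear lower bound $F_1(t)\gtrsim\e(1+t)^{-\mu/2}$ (via the multiplier $e^t(1+t)^{(\mu-1)/2}$ and modified-Bessel asymptotics --- a step you do not supply), converts it by H\"older into $\int|u|^p\,dx\gtrsim\e^p(1+t)^{-(n+\mu-1)p/2+n-1}$, integrates twice through the $F_0$-equation to obtain the seed \eqref{key2}, and then applies Lemma~\ref{kato} with the $F_0$-frame \eqref{key1}; the outcome is $\overline{\gamma}=\gamma_S(p,n+\mu)$ when $q>0$ (i.e.\ $p<p_*$), $\overline{\gamma}=2\gamma_F(p,n)$ when $q<0$, and the logarithm at $q=0$ which yields $\sigma_0$.

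The second problem is the one-dimensional improvement. No explicit representation formula is available for \eqref{eq:damped_b=1} with general $\mu$, and the paper does not use one: the heat-like bound $T_\e\lesssim\e^{-(p-1)/\gamma_F(p,1+[\mu-1]_+)}$ comes from the \emph{linear} seed $F_0(t)\gtrsim\e(1+t)^{1-\mu}$ (equivalently $\F(t)\gtrsim\e t$, estimate \eqref{key3} with $H_0=0$) --- precisely the seed you derived in your first paragraph and then discarded as ``weaker''. Being of order $\e$ rather than $\e^p$, it produces via \eqref{key1} a bound $\widetilde{T}=\e^{-(p-1)/\gamma_F(p,n+\k+\sqrt{\delta})}$ which, for $n=1$ and $\sqrt{\delta}=|\mu-1|<1$, beats the wave-like bound exactly on the range $1<p<2/(1+|\mu-1|)$. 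So the missing idea is not a sharper pointwise estimate but the comparison of \emph{three} lower bounds (linear seed, nonlinear seed with $q>0$, nonlinear seed with $q\le0$) within the same iteration frame.
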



\begin{figure}	
	\centering
	\subfloat[][Case $n=1$.]{
		\begin{tikzpicture}[scale=1.5, semithick]
		
		\begin{scope}[font=\footnotesize]
		
		\fill[wavecol]
		(4,0) -- (0,0)
		-- plot [domain=1:2] ( \x-1, {2-2/\x} )
		-- plot [domain=2:1] ( \x-1, {2/\x} )
		-- (0,2) -- (0,4)
		-- plot[domain=1:3] (\x-1, {4/\x})
		-- plot [domain=3:5] ( \x-1, {2*(\x+1)/(\x-1)/\x} )
		-- cycle;
		
		\fill[heatcol]
		plot [domain=1:2] ( \x-1, {2-2/\x} )
		-- plot [domain=2:1] ( \x-1, {2/\x} )
		--cycle;
		
		\fill[heatcol]
		(0,4) -- (0,4.1) -- (2,4.1) -- (2,4/3)
		-- plot[domain=3:1] (\x-1, {4/\x})
		-- cycle;
		
		\fill[mixedcol]
		(0,4) -- (0,4.1) -- (2,4.1) -- (2,4/3)
		-- plot[domain=3:1] (\x-1, {4/\x})
		-- cycle;
		
		
		\draw[loosely dotted] (0,4/3) node [left] {$\mu_*=\frac{4}{3}$} -- (2,4/3);
		
		\draw[loosely dotted] (2,0) node [below] {$p_F(1)=3$} -- (2,4/3);
		
		\draw[loosely dotted] (1,0) node [below] {$2$} -- (1,1);
		
		\draw[thick] (2,4/3) -- (2,4.1);
		
		\draw[loosely dotted] (0,1) node [left] {$1$} -- (1,1);
		
		\node[left] at (0,4) {$4$};
		
		\node[left] at (0,2) {$2$};
		
		\draw[thick]
		plot [domain=3:5] ( \x-1, {2*(\x+1)/(\x-1)/\x} ) ;	
		
		\draw[densely dash dot, thick]
		plot[domain=1:3] (\x-1, {4/\x}) ;
		
		\draw
		plot[domain=1:2] ( \x-1, {2-2/\x} );	
		
		\draw
		plot[domain=1:2] ( \x-1, {2/\x} )
		;

		
		\node [above left] at (0,0) {$0$};
		\node [below right] at (0,0) {$1$};
		
		\draw[->]  (0,0) -- (0,4.3) node [left] {$\mu$};
		\draw[->]  (0,0) -- (4.2,0) node [below] {$p$};

		\node [below] at (3.45,1.35) {$p=p_S(1+\mu)$};
		\node [right] at (0.35,0.5) {$p=\tfrac{2}{1+|\mu-1|}$};
		\node [above right,rotate=0] at (0.3,3) {$p=p_*(n,\mu)$};
		
		
		\node[left,opacity=0] at (0,0) {$\mu_*=\frac{4}{3}$};
		\node[left,opacity=0] at (0,0) {$n+3$};
		
		\end{scope}
		\end{tikzpicture}
	}
	\quad
	\subfloat[][Case $n\ge2$.]{
		\begin{tikzpicture}[scale=1.5, semithick]
		
		\begin{scope}[font=\footnotesize]
		
		\fill[wavecol]
		(0,0)
		-- plot [domain=1:1+2/3] ( \x*2-2, {(8/\x-2)/2} )
		-- plot [domain=1+2/3:1+sqrt(32)/4]
		( \x*2-2, {((-2*\x*\x+4*\x+2)/(\x*\x-\x))/2} )		
		-- cycle;

		\fill[heatcol]
		plot [domain=1+2/3:1] ( \x*2-2, {(8/\x-2)/2} )
		-- (0,6/2) -- (0,4.1) -- (4/3,4.1)
		-- cycle;
		
		\fill[mixedcol]
		plot [domain=1+2/3:1] ( \x*2-2, {(8/\x-2)/2} ) -- (0,6/2) -- (0,4.1) -- (4/3,4.1) -- cycle;
		
		
		\draw[loosely dotted] (0,1/2+9/5/2) node [left] {$\mu_*$} -- (2/3*2,1/2+9/5/2);
		
		\draw[loosely dotted] (2/3*2,0) node [below] {$p_F(n)$} -- (2/3*2,1/2+9/5/2);
		
		\draw[thick] (2/3*2,1/2+9/5/2) -- (2/3*2,4.1);
		
		\draw[thick]
		plot [domain=1+2/3:1+sqrt(32)/4]
		( \x*2-2, {((-2*\x*\x+4*\x+2)/(\x*\x-\x))/2} );	
		
		\draw[densely dash dot, thick]
		plot [domain=1:1+2/3] ( \x*2-2, {(8/\x-2)/2} );

		\node [left] at (0,6/2) {$n+3$};
		
		\node [below] at (1.41421*2, 0) {$p_S(n)$};

		
		\node at (0,0) {};
		\node [above left] at (0,0) {$0$};
		\node [below right] at (0,0) {$1$};
		
		\draw[->]  (0,0) -- (0,4.3) node [left] {$\mu$};
		\draw[->]  (0,0) -- (4.2,0) node [below] {$p$};

		\node [above right] at (1.75,0.65) {$p=p_S(n+\mu)$};
		\node [above] at (0.75,2.55) {$p=p_*(n,\mu)$};

		
		\node[left,opacity=0] at (0,0) {$\mu_*=\frac{4}{3}$};
		\node[left,opacity=0] at (0,0) {$n+3$};
		
		\end{scope}
		\end{tikzpicture}
	}
	\caption{Here we collect the results from Theorem \ref{cor2}. If $(p,\mu)$ is in the blue area, $T_\e \lesssim \e^{-2p(p-1)/\gamma_S(p,n+\mu)}$, hence the lifespan estimate is wave-like.
		If $(p,\mu)$ is in the purple area, $T_\e \lesssim \e^{-p(p-1)/\gamma_F(p,n)}$ and the lifespan estimate is of mixed-type.
		The dash-dotted line given by $p=p_*(n,\mu)$ highlights the improvement $T_\e \lesssim \sigma_0(\e)$.
		In the case $n=1$, if $(p,\mu)$ is in the red area, $T_\e \lesssim \e^{-(p-1)/\gamma_F(p,1+[\mu-1]_-)}$ and the lifespan estimate is heat-like.}
	\label{fig2}
\end{figure}
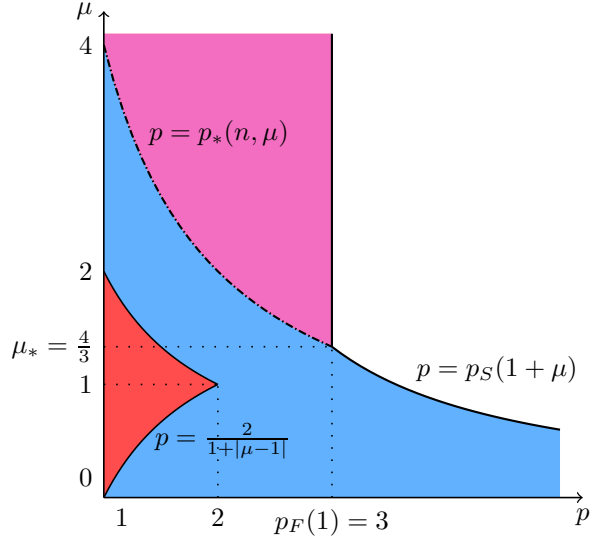
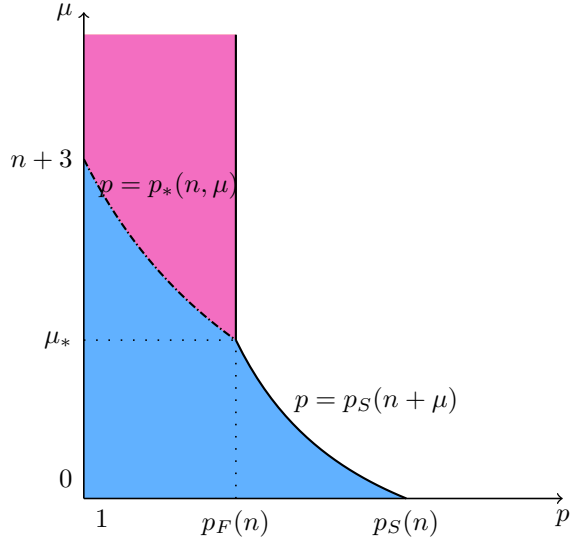


\begin{theorem}\label{thm:H=0}
	Let $\mu_1 \ge 0$, $\mu_2\in\R$, $\delta\ge0$ and $1< p < p_{\mu_1,\delta}(n)$, with $p_{\mu_1,\delta}(n)$ defined in \eqref{pch>0}.
	Assume that $f \in H^1(\R^n)$,
	$g \in L^2(\R^n)$ and $f >0$, $h=0$,
	with $h$ defined in \eqref{hpindata}.
	Suppose that $u$ is an energy solution of \eqref{eq:main_problem} on $[0,T)$ that satisfies
	\eqref{support}
	with some $R\ge1$.
	
	Then, there exists a constant $\e_4=\e_4(f,g,\mu_1,\mu_2,p, R)>0$
	such that the blow-up time $T_\e$ of problem \eqref{eq:main_problem}, for $0<\e\le\e_4$, has to satisfy:
	\begin{itemize}
		\item if $\sqrt{\delta} \le n-d_*(n+\mu_1)$, then
		\begin{equation*}
			T_{\e} \lesssim \e^{-2p(p-1)/\gamma_S(p,n+\mu_1)};
		\end{equation*}
		
		\item if $n-d_*(n+\mu_1) < \sqrt{\delta} < n+2$, then
		\begin{equation*}
		T_\e \lesssim
		\left\{
		\begin{aligned}
		& \e^{-2p(p-1)/\gamma_S(p,n+\mu_1)},
		&&\text{if $1<p< p_*$,}
		\\
		& \sigma_*(\e)
		&&\text{if $p=p_*$,}
		\\
		& \sigma(\e),
		&&\text{if $p_* < p < p_{\mu_1,\delta}(n)$,}
		\end{aligned}
		\right.
		\end{equation*}
		where $\sigma \equiv \sigma(\e)$ and $\sigma_* \equiv \sigma_*(\e)$ are the solutions respectively of
		\begin{gather*}
		\e^p \sigma^{\frac{\gamma_{F}(p,n+(\mu_1-1-\sqrt{\delta})/2)}{p-1}}
		\ln(1+\sigma)^{1-\sgn\delta} = 1,
		\\
		\e^p \sigma_*^{\frac{\gamma_{F}(p,n+(\mu_1-1-\sqrt{\delta})/2)}{p-1}}
		\ln(1+\sigma_*)^{2-\sgn\delta} = 1;
		\end{gather*}	
		
		\item if $\sqrt{\delta} \ge n+2$, then
		\begin{equation*}
			T_\e \lesssim \sigma(\e).
		\end{equation*}
	\end{itemize}

	Moreover, if $n=1$, $0\le \delta < 1$ and
	\begin{equation}\label{r_*}
	1< p < r_*(\mu_1,\delta) :=
	\left\{
	\begin{aligned}
	& 1+ 2 \, \frac{2-\sqrt{\delta}}{1+\mu_1+\sqrt{\delta}},
	&\quad&\text{if $\sqrt{\delta}<\theta$},
	\\
	& 1+ 2 \, \frac{2-\theta}{1+\mu_1+\theta} = \frac{2}{1+\theta},
	&\quad&\text{if $\sqrt{\delta} = \theta$},
	\\
	& \frac{2}{1+\sqrt{\delta}},
	&\quad&\text{if $\sqrt{\delta} > \theta$},
	\end{aligned}
	\right.
	\end{equation}
	with
	\begin{equation}\label{theta}
	\theta \equiv \theta(\mu_1) := 1 + \frac{\mu_1}{2} - \frac{1}{2} \sqrt{\mu_1^2 +16}
	\in (-1,1),
	\end{equation}
	then the estimate for the blow-up time $T_\e$ is improved by
	\begin{equation*}
	T_{\e} \lesssim \e^{-(p-1)/\gamma_F\left(p,(\mu_1+1+\sqrt{\delta})/2\right)}.
	\end{equation*}
\end{theorem}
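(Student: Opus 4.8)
The plan is to reduce the statement to the integral Kato-type lemma of Section~\ref{sec:kato}, exactly as in the proof of Theorem~\ref{thm1}, but with the data-driven lower bounds recomputed under the degenerate condition $h=0$. The starting observation is that the roots of the characteristic equation $\lambda^2+(\mu_1-1)\lambda+\mu_2=0$ attached to the scale-invariant ODE are $\lambda_\pm=\tfrac{1-\mu_1\pm\sqrt{\delta}}{2}$, and that the coefficient of the dominant mode $(1+t)^{\lambda_+}$ in the averaged solution is proportional to $\int_{\R^n}h\,dx$. Hence, while in Theorem~\ref{thm1} the hypothesis $h>0$ of \eqref{hpindata} forces a genuine $(1+t)^{\lambda_+}$-growth of scale $\e$, here the assumption $h=0$ makes this coefficient vanish, and the positive base for the iteration must be produced differently. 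This is the whole source of the altered (mixed-type) lifespans and of the $\e^p$ appearing in the defining equations of $\sigma$ and $\sigma_*$.

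I would first treat the heat/mixed-type branch through the averaged functional $F_0(t):=\int_{\R^n}u(x,t)\,dx$, which, using \eqref{support} to discard the Laplacian, satisfies $F_0''+\tfrac{\mu_1}{1+t}F_0'+\tfrac{\mu_2}{(1+t)^2}F_0=\int_{\R^n}|u|^p\,dx=:N(t)\ge0$. Solving by variation of parameters with fundamental system $(1+t)^{\lambda_\pm}$ (Wronskian $\propto(1+t)^{-\mu_1}$) gives a positive Green's function $G(t,s)$ for $t>s$, so that
\begin{equation*}
F_0(t)=A\,(1+t)^{\lambda_+}+B\,(1+t)^{\lambda_-}+\int_0^tG(t,s)N(s)\,ds,\qquad A=\tfrac{\e}{\sqrt{\delta}}\!\int_{\R^n}h\,dx .
\end{equation*}
Under $h=0$, $f>0$ one has $A=0$ and $B=\e\int_{\R^n}f\,dx>0$, so the base bound is only $F_0(t)\gtrsim\e(1+t)^{\lambda_-}$. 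Bounding $N(t)\gtrsim(t+R)^{-n(p-1)}F_0(t)^p$ by H\"older and performing one step against $G$ produces a first nonlinear lower bound of scale $\e^p$; feeding this into the Kato lemma yields the mixed-type estimate, with effective Fujita argument $n+\tfrac{\mu_1-1-\sqrt{\delta}}{2}$ and the extra factor $p$, which is precisely the $\e^p\,\sigma^{\gamma_F/(p-1)}$ balance. The logarithmic corrections appear when $\delta=0$ (coalescing roots, fundamental system $(1+t)^{\lambda},(1+t)^{\lambda}\ln(1+t)$), explaining the power $1-\sgn\delta$, and an additional logarithm arises on the transition surface $p=p_*$ where, by \eqref{p*rel}, the wave-like and mixed-type rates coincide, explaining the power $2-\sgn\delta$ of $\sigma_*$.

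The wave-like branch I would obtain from the Yordanov--Zhang weighted functional $F_1(t):=\int_{\R^n}u(x,t)\psi(x,t)\,dx$ built from the positive eigenfunction $\Delta\phi_1=\phi_1$, exactly as for Theorem~\ref{thm1}. The decisive point is that the lower bound for $F_1$ is driven by $\int_{\R^n}f\phi_1\,dx>0$, which survives under $f>0$ regardless of $h$; consequently the wave-like estimate $\e^{-2p(p-1)/\gamma_S(p,n+\mu_1)}$ is inherited unchanged, and the competition between the two branches is resolved by comparing $2p(p-1)/\gamma_S$ with $p(p-1)/\gamma_F$, i.e.\ by the relation $\gamma_S=2\gamma_F$, which is the threshold $p=p_*$ of \eqref{def:p*}; this also relocates the regime boundaries to $\{n-d_*,\,n+2\}$ in place of $\{n-2,\,n-d_*\}$. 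Finally, for the one-dimensional improvement I would exploit that, when $n=1$, $0\le\delta<1$ and $p$ is small, the subdominant exponent $\lambda_-$ is large enough that the linear seed $\e(1+t)^{\lambda_-}$ already drives blow-up without the extra nonlinear step, so the Kato lemma returns a genuinely heat-like (single-power) lifespan with shifted argument $1-\lambda_-=\tfrac{\mu_1+1+\sqrt{\delta}}{2}$; the thresholds $r_*$ and $\theta$ of \eqref{r_*}--\eqref{theta} delimit exactly the range in which this refined bound beats the mixed-type one.

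The main obstacle, I expect, is the second step: recomputing the base lower bound under $h=0$ and controlling the sign through the slow, possibly decaying, mode $(1+t)^{\lambda_-}$. One must verify that the Green's-function contribution keeps $F_0$ nonnegative (so that H\"older applies), track the first nonlinear iterate carefully enough to land the correct $\e^p$-scaling and the exact powers of $\ln(1+t)$, and reconcile all of this with the hypotheses of the Kato lemma uniformly across the cases $\delta=0$, $\delta>0$, and the transition surface $p=p_*$. Getting the logarithmic exponents $1-\sgn\delta$ and $2-\sgn\delta$ right is the delicate bookkeeping at the heart of the argument.
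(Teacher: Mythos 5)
Your architecture is largely the paper's: the averaged functional $F_0$ with modes $(1+t)^{\lambda_\pm}$, the observation that $h=0$ annihilates the coefficient of the dominant mode and leaves only the seed $\e(1+t)^{\lambda_-}$, the Yordanov--Zhang functional whose positivity is driven by $\int f\phi_1>0$ alone, two applications of the Kato-type lemma, and a final comparison. Your account of the one-dimensional improvement is also correct: it is the lemma applied to the iteration frame together with the linear seed, giving $\e^{-(p-1)/\gamma_F(p,n+\k+\sqrt{\delta})}$ with $n+\k+\sqrt{\delta}=(\mu_1+1+\sqrt{\delta})/2$ for $n=1$, and $r_*,\theta$ delimiting where this wins.

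There is, however, a genuine gap in your derivation of the mixed-type estimates $\sigma,\sigma_*$. You propose to produce the $\e^p$-scale lower bound by one nonlinear iteration of the seed, i.e.\ from $N(t)\gtrsim t^{-n(p-1)}F_0(t)^p\gtrsim\e^p t^{-n(p-1)+\lambda_- p}$ pushed once through the Green's function. This cannot deliver the stated result. When this driving term dominates the double integral, a short computation shows the new Kato exponent is exactly $p$ times that of the linear seed while $E$ passes from $\e$ to $\e^p$, so the lemma returns the \emph{same} lifespan as the direct application to the seed: one iteration step cannot move the fixed point of the iteration, so nothing is gained. When it is subdominant you do get $\F(t)\gtrsim\e^p t^{\l}$ and hence $\sigma(\e)$, but the borderline at which the extra logarithm is generated is then the borderline of \emph{your} driving term, at $p=1+(2-\sqrt{\delta})/(n+\k+\sqrt{\delta})$, not at $p=p_*$. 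In the paper the wave-like bound, the location $p=p_*$ of the transition surface, and the factor $\ln(1+\sigma_*)^{2-\sgn\delta}$ all come from one inequality, \eqref{key2}, whose driving term $\e^p(1+t)^{-(n+\mu_1-1)p/2+n-1}$ is obtained from $F_1\gtrsim\e(1+t)^{-\mu_1/2}$ via the Yordanov--Zhang estimate \eqref{YZ}: its case $q>0$ yields exactly $\gamma_S(p,n+\mu_1)$, $q<0$ yields $2\gamma_F(p,n+\k)$, and $q=0\Leftrightarrow p=p_*$ is where the additional logarithm appears. Your heuristic that an extra logarithm ``arises where the wave-like and mixed-type rates coincide'' is not a mechanism --- two separately derived bounds whose rates agree at a point do not thereby acquire a logarithm. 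To close the gap, replace the one-step $F_0$ iterate by the $F_1$-driven bound \eqref{key2} and let its three regimes in $q$ produce the wave-like, $\sigma_*$, and $\sigma$ estimates respectively.
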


\begin{remark}\label{rem:r*}
	In the $1$-dimensional case of Theorem\til\ref{thm:H=0}, one can check that $r_*<p_{\mu_1,\delta}(1)$ holds always, except when $\mu_1=3$ and $\delta=0$, since in this case $r_*=p_{3,0}(1)=p_S(4)=2$.
	About the relation between $p_*$ and $r_*$, we have that, for $0 \le {\delta}<1$, if $\sqrt{\delta} \lesseqgtr \theta$ then $p_* \lesseqgtr r_*$.
\end{remark}

\begin{remark}
	We conjecture that the estimates in the previous two theorems are indeed optimal, except in dimension $n=1$ for Theorem\til\ref{cor2} on the transition curve defined by
	\begin{equation*}
		p = \frac{2}{1+|\mu-1|} \quad \text{for $0\le \mu \le 2$,}
	\end{equation*}
	and for Theorem\til\ref{thm:H=0} on the transition surface
	\begin{equation*}
		p=r_*(\mu_1,\delta) \quad \text{for $0\le \delta \le 1$.}
	\end{equation*}
	
	Moreover, in the critical case we expect, due to the wave-like and mixed-like behaviours,
	\begin{equation*}
		T_\e \sim \exp(C \e^{-p(p-1)}),
	\end{equation*}
	except for $\sqrt{\delta}=n-d_*(n+\mu_1)$ and $p=p_{\mu_1,\delta}(n)$, where the lifespan should be different.
\end{remark}

\begin{remark}\label{rem:weaker-initial-data}
	The conditions \eqref{hpindata} on the initial data in Theorem~\ref{cor1} \& \ref{thm1} can be replaced by the less strong conditions
	\begin{align*}
		&\ints f(x) \ge 0,
		&&
		\ints h(x) > 0,
		\\
		&\ints f(x)\phi_1(x) \ge 0,
		&&
		\ints h(x)\phi_1(x) > 0,
	\end{align*}
	where the positive function $\phi_1(x)$ is defined later in \eqref{YZtest}.
	
	Similarity can be done for the initial conditions of Theorem~\ref{cor2} \& \ref{thm:H=0}, requiring
	\begin{align*}
		&\ints f(x) > 0,
		&&
		\ints h(x) = 0,
		\\
		&\ints f(x)\phi_1(x) > 0,
		&&\ints h(x)\phi_1(x) = 0.
	\end{align*}
	It will be clear from the proof of our theorems that these weaker hypothesis are sufficient.
\end{remark}


\subsection{Wave equation with scattering damping and negative mass}

Finally, in this subsection we want to continue the study of a problem examined by the authors in \cite{LST19.1,LST19.2}. In these two works, we considered the Cauchy problem for the wave equation with scattering damping and negative mass term, thus
\begin{equation}
\label{problem_negmass}
\left\{
\begin{aligned}
& w_{tt} - \Delta w + \frac{\nu_1}{(1+t)^\beta} w_t + \frac{\nu_2}{(1+t)^{\alpha+1}} w = |w|^p, \quad\text{in $\R^n\times(0,T)$}, \\
& w(x,0)=\e f(x), \quad w_t(x,0)=\e g(x), \quad x\in\R^n,
\end{aligned}
\right.
\end{equation}
where $\nu_1\ge0$, $\nu_2<0$, $\alpha\in\R$ and $\beta>1$.

In Subsection\til\ref{sub:damped} we already observed that, if the damping is of scattering type, the solution of the homogeneous damped wave equation \lq\lq scatters'' to the one of the wave equation. For the equation with power non-linearity, according to the results by Lai and Takamura \cite{LT18} and Wakasa and Yordanov \cite{WY19}, the solution again seems to be wave-like both in  the critical exponent and in the lifespan estimate.

In \cite{LST19.1}, the authors took in consideration \eqref{problem_negmass} with $\alpha>1$ and observed a double scattering phenomenon, in the sense that both the damping and the mass terms seem to be not effective. Hence, the solution behaves like that of the wave equation with power non-linearity $u_{tt}-\Delta u = |u|^p$. More precisely, supposing $f,g>0$ for the simplicity, we established the blow-up for $1<p<p_S(n)$ and the upper bound for the lifespan estimates:
\begin{equation*}
	T_\e \lesssim
	\left\{
	\begin{aligned}
	&\e^{-(p-1)/\gamma_F(p,n-1)}
	&& \text{if $n=1$ or $n=2,1<p<2$,}
	\\
	& a(\e)
	&&\text{if $n=p=2$,}
	&&
	\\
	& \e^{-2p(p-1)/\gamma_S(p,n)}
	&& \text{if $n=2,2<p<p_S(n)$ or if $n\ge3$,}
	\end{aligned}
	\right.
\end{equation*}
where $a\equiv a(\e)$ satisfies $\e^2 a^2 \log(1+a)=1$, although in the case $n=p=2$ more technical conditions were required.

In \cite{LST19.2}, the authors studied the case $\alpha<1$, discovering a new behaviour in the lifespan estimate. Indeed, we proved blow-up for every $p>1$ and the upper lifespan estimate
\begin{equation*}
	T_\e \lesssim \zeta(C\e),
\end{equation*}
where $\zeta\equiv \zeta(\o{\e})$ is the larger solution of the equation
\begin{equation*}
	\o{\e} \zeta^{\frac{\gamma_F(p,n-(1+\alpha)/4)}{(p-1)}}
	\exp\left( K \zeta^{\frac{1-\alpha}{2}}\right)=1,
	\quad
	\text{with $K = \frac{2 \sqrt{|\nu_2|}}{1-\alpha} \exp\left(\frac{\nu_1}{2(1-\beta)}\right)$.}
\end{equation*}
As observed in Remark\til2.1 of \cite{LST19.2}, a less sharp but more clear estimate for the lifespan in the case $\alpha<1$ is
\begin{equation*}
	T_\e \lesssim \left[\log\left( 1/\e\right)\right]^{2/(1-\alpha)}.
\end{equation*}
Hence, if the negative mass term with $\alpha>1$ seems to have no influence on the behaviour of the solution, on the contrary if $\alpha<1$ the negative mass term becomes extremely relevant, implying the blow-up for all $p>1$ and a lifespan estimate which is much shorter, compared to the ones introduced previously.

We come now to the case $\alpha=1$. This is particular and was not deepened in our previous works. Indeed in Subsection\til\ref{sub:negmass}, after introducing a multiplier to absorb the damping term, we will show that we can get blow-up results and lifespan estimates for this problem by reducing ourself to calculations similar to the ones we will perform to prove the results in the previous subsections. Roughly speaking, we will find out that \eqref{problem_negmass} with $\alpha=1$ has the same behaviour of \eqref{eq:main_problem} with $\mu_1=0$ and $\mu_2=\nu_2 e^{\nu_1/(1-\beta)}$.

Therefore, in the following we will consider the Cauchy problem
\begin{equation}
\label{problem_negmass_a=1}
\left\{
\begin{aligned}
& w_{tt} - \Delta w + \frac{\nu_1}{(1+t)^\beta} w_t + \frac{\nu_2}{(1+t)^{2}} w = |w|^p, \quad\text{in $\R^n\times(0,T)$}, \\
& w(x,0)=\e f(x), \quad w_t(x,0)=\e g(x), \quad x\in\R^n,
\end{aligned}
\right.
\end{equation}
where $\nu_1\ge0$, $\nu_2<0$ and $\beta>1$.

\begin{definition}
	We say that $u$ is an energy solution of \eqref{problem_negmass_a=1} over $[0,T)$ if
	\begin{equation*}
	w \in C([0,T), H^1(\R^n)) \cap C^1([0,T),L^2(\R^n)) \cap C((0,T), L^p_{loc}(\R^n))
	\end{equation*}
	satisfies $w(x,0)=\e f(x)$ in $H^1(\R^n)$, $w_t(x,0) = \e g(x)$ in $L^2(\R^n)$ and
	\begin{equation}\label{def:negmass}
	\begin{split}
	&\int_{\R^n}w_t(x,t)\phi(x,t)dx
	+\int_0^tds\int_{\R^n}\left\{-w_t(x,s)\phi_t(x,s)+\nabla w(x,s)\cdot\nabla\phi(x,s)\right\}dx \\
	&+\int_0^tds\int_{\R^n}\frac{\nu_1 }{(1+s)^{\beta}}w_t(x,s) \phi(x,s)dx + \int_0^t ds \ints \frac{\nu_2}{(1+s)^{2}}w(x,s)\phi(x,s)dx \\
	= &
	\int_{\R^n}\e g(x)\phi(x,0)dx
	+
	\int_0^tds\int_{\R^n}|w(x,s)|^p\phi(x,s)dx
	\end{split}
	\end{equation}
	with any test function $\phi \in C_0^\infty(\R^n \times [0,T))$ for $t\in[0,T)$.
	
\end{definition}

We have the following result. See Figure\til\ref{fig3} for a graphic representation of them.

\begin{theorem}\label{thm_negmass}
	Fix $\nu_1\ge0$, $\nu_2<0$, $\beta>1$. Define
	\begin{equation*}
	\delta := 1-4 \nu_2 e^{{\nu_1}/{(1-\beta)}} >1,
	\qquad
	d_*(n) := \frac{1}{2} \left(-1- n + \sqrt{n^2+10n-7} \right) \in [0,2)
	\end{equation*}
	and let
	$1< p < p_{\delta}(n) $, with
	\begin{equation*}
	\begin{aligned}
	p_{\delta}(n) &=
	\max\left\{
	p_F\left(n-\frac{1+\sqrt{\delta}}{2}\right), \,
	p_S\left(n\right) \right\}
	\\
	&=
	\left\{
	\begin{aligned}
	& p_S\left(n\right)
	&\quad&\text{if $n \ge 2$, $\sqrt{\delta} \le n-d_*(n)$,}
	\\
	& p_F\left(n-\frac{1+\sqrt{\delta}}{2}\right)
	&\quad&\text{if $n \ge 2$, $n-d_*(n) < \sqrt{\delta}<2n-1$,}
	\\
	& +\infty
	&\quad&\text{if $n = 1$ or if $n\ge2$, $\sqrt{\delta}\ge 2n-1$.}
	\end{aligned}
	\right.
	\end{aligned}
	\end{equation*}
	Assume that $f \in H^1(\R^n)$,
	$g \in L^2(\R^n)$ are non-negative and not both vanishing.
	Suppose that $w$ is an energy solution of \eqref{problem_negmass_a=1} on $[0,T)$ that, for some $R\ge1$, satisfies
	\begin{equation*}
	\supp w \subset\{(x,t)\in\R^n\times[0,\infty) \colon |x|\le t+R\}.
	\end{equation*}

	Then, there exists a constant $\e_5=\e_5(f,g,\beta,\nu_1,\nu_2,n,p, R)>0$
	such that the blow-up time $T_\e$ of problem \eqref{problem_negmass_a=1}, for $0<\e\le\e_5$, has to satisfy:
	\begin{itemize}
		\item if $\sqrt{\delta} \le n-2$, then
		\begin{equation*}
		T_{\e} \lesssim \e^{-2p(p-1)/\gamma_S(p,n)};
		\end{equation*}
		
		\item if $n-2 < \sqrt{\delta} < n-d_*(n)$, then
		\begin{equation*}
		T_\e \lesssim
		\left\{
		\begin{aligned}
		&\e^{-(p-1)/\gamma_F(p,n-(1+\sqrt{\delta})/2)}
		,
		&&\text{if $1<p\le \frac{2}{n-\sqrt{\delta}}$,}
		\\
		&\e^{-2p(p-1)/\gamma_S(p,n)},
		&&\text{if $\frac{2}{n-\sqrt{\delta}} < p < p_{\delta}(n)$;}
		\end{aligned}
		\right.
		\end{equation*}
		
		\item if $\sqrt{\delta} \ge n - d_*(n)$, then
		\begin{equation*}
		T_{\e} \lesssim \e^{-(p-1)/\gamma_F(p,n-(1+\sqrt{\delta})/2)}
		=
		\e^{-\left[ 2/(p-1) - n + (1+\sqrt{\delta})/2 \right]^{-1}}.
		\end{equation*}
	\end{itemize}
\end{theorem}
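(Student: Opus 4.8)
The plan is to reduce \eqref{problem_negmass_a=1} to the already–treated scale-invariant massive problem \eqref{eq:main_problem} and then quote Theorem~\ref{thm1}. The only genuinely new ingredient is the treatment of the scattering damping $\nu_1(1+t)^{-\beta}w_t$, which I would absorb through the integrating factor
\begin{equation*}
	m(t) := \exp\left( \int_0^t \frac{\nu_1}{(1+s)^\beta}\,ds \right),
	\qquad
	m'(t) = \frac{\nu_1}{(1+t)^\beta}\,m(t),
\end{equation*}
so that $m\,(w_{tt}+\tfrac{\nu_1}{(1+t)^\beta}w_t) = (m\,w_t)_t$. Since $\beta>1$, the total damping is finite, $\int_0^\infty \nu_1(1+s)^{-\beta}\,ds = \nu_1/(\beta-1)$, whence $m$ increases from $m(0)=1$ to the finite limit $m_\infty := e^{\nu_1/(\beta-1)}$; in particular $1\le m(t)\le m_\infty$ for all $t\ge0$. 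Multiplying \eqref{def:negmass} by $m$ I would rewrite the weak formulation with the damping collected into $(m\,w_t)_t$, at the cost of the bounded weight $m$ standing in front of both the mass term and the nonlinearity.

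The heart of the matter is then to recognise that, modulo the bounded factor $m$ and terms decaying faster than $(1+t)^{-2}$ (which appear because $\beta+1>2$), the weighted problem behaves exactly like \eqref{eq:main_problem} with the \emph{effective} coefficients $\mu_1=0$ and $\mu_2=\nu_2 e^{\nu_1/(1-\beta)}=\nu_2/m_\infty$. With these, the key quantity introduced for \eqref{eq:main_problem} is $\delta=(\mu_1-1)^2-4\mu_2 = 1-4\nu_2 e^{\nu_1/(1-\beta)}$, which is $>1$ because $\nu_2<0$, and the blow-up threshold \eqref{pch>0} specialises to
\begin{equation*}
	p_{0,\delta}(n)=\max\left\{ p_F\!\left(n-\tfrac{1+\sqrt{\delta}}{2}\right),\, p_S(n)\right\}=p_\delta(n).
\end{equation*}
Since here $\delta>0$, the function $\phi(\e)$ of Theorem~\ref{thm1} loses its logarithmic factor and reduces to the pure power $\e^{-(p-1)/\gamma_F(p,n-(1+\sqrt\delta)/2)}$; the three regimes $\sqrt\delta\le n-2$, $n-2<\sqrt\delta<n-d_*(n)$ and $\sqrt\delta\ge n-d_*(n)$ of Theorem~\ref{thm_negmass} are then \emph{verbatim} the three cases of Theorem~\ref{thm1} with $\mu_1=0$ (note that $\gamma_S(p,n+\mu_1)=\gamma_S(p,n)$ and $d_*(n+\mu_1)=d_*(n)$).

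It remains to run the iteration and invoke the Kato-type lemma of Section~\ref{sec:kato}. I would reproduce the proof of Theorem~\ref{thm1} from Section~\ref{sec:proof} on the $m$-weighted formulation, testing against the Yordanov--Zhang function $\phi_1$ to obtain the wave-like (Strauss) lower bound and against $1$ to obtain the heat-like (Fujita) one, and combining them through the Kato lemma; the weight $m$, being squeezed between the positive constants $1$ and $m_\infty$, affects only the multiplicative constants and not the exponents. For the data it suffices to note that, with the effective $\mu_1=0$ and $\delta>1$, the quantity $h=\tfrac{\sqrt\delta-1}{2}f+g$ from \eqref{hpindata} has strictly positive coefficient on $f$, so the hypotheses $f,g\ge0$ (not both vanishing) of Theorem~\ref{thm_negmass} force $\int h\,dx>0$ and $\int h\,\phi_1\,dx>0$, precisely the weak sign conditions of Remark~\ref{rem:weaker-initial-data} that launch the iteration.

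The main obstacle I expect is exactly the justification of the effective mass $\nu_2 e^{\nu_1/(1-\beta)}$: one must show that inserting the multiplier $m$ into the functional/test-function identities yields, in the limit $t\to\infty$, the coefficient $\nu_2/m_\infty$ rather than the naive $\nu_2$, and that the faster-decaying corrections $\tfrac{\nu_1\beta}{2}(1+t)^{-\beta-1}$ together with the non-constancy of $m$ are genuinely negligible at the level of the indicial exponents governing the two fundamental solutions $(1+t)^{(1\pm\sqrt\delta)/2}$ of the reduced mass operator. Once this asymptotic identification is secured, the remainder is a routine transcription of the estimates already established for Theorem~\ref{thm1}.
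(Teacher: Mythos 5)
Your proposal is correct and follows essentially the same route as the paper: the bounded integrating factor absorbing the scattering damping, the identification of the effective coefficients $\mu_1=0$, $\mu_2=\nu_2 e^{\nu_1/(1-\beta)}$, and the rerun of the two key lower bounds (for the unweighted functional and for the Yordanov--Zhang-weighted one) feeding the Kato-type lemma. The one step you flag as the main obstacle --- justifying the effective mass --- is settled in the paper by a comparison argument: since $\nu_2<0$ and the multiplier is squeezed between $e^{\nu_1/(1-\beta)}$ and $1$, the functional $\int_{\R^n} w\,dx$ dominates the solution of the twice-integrated equation with the constant coefficient $\nu_2 e^{\nu_1/(1-\beta)}$, which has exactly the structure of \eqref{2} with $\mu_1=0$.
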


\begin{figure}
	\centering
	\subfloat[][Case $n=1$.]{
		\begin{tikzpicture}[scale=1.5, semithick]
		
		\begin{scope}[font=\footnotesize]
		
		\fill[heatcol]
		plot[domain=5.5:2] ( \x-1, {(1-2/\x)} )
		-- (1,0) -- (0,0) -- (0,3) -- (4.5,3)
		-- cycle;
		
		\fill[wavecol]
		(4.5,0) --
		plot[domain=2:5.5] ( \x-1, {(1-2/\x)} )
		-- cycle;
		
		
		\draw[loosely dotted] (0,1) node [left] {$1$} -- (4.5,1);
		
		\draw
		plot[domain=2:5.5] ( \x-1, {(1-2/\x)} );
		
		\node[below] at (1,0) {$2$};
		
		
		\node [above left] at (0,0) {$0$};
		\node [below right] at (0,0) {$1$};
		
		\draw[->]  (0,0) -- (0,3.2) node [left] {$\sqrt{\delta}$};
		\draw[->]  (0,0) -- (4.7,0) node [below] {$p$};

		\node [below left] at (1.8,0.7) {$p=\tfrac{2}{1-\sqrt{\delta}}$};
		
		\node[left, opacity=0] at (0,0) {$2-d_*(2)$};
		\node[left, opacity=0] at (0,0) {$n-d_*(n)$};
		
		\end{scope}
		\end{tikzpicture}
	}
	\quad
	\subfloat[][Case $n=2$.]{
		\begin{tikzpicture}[scale=1.5, semithick]
		
		\begin{scope}[font=\footnotesize]
		
		\fill[wavecol]
		(2.5615528,0) --
		plot[domain=1:3.5615528] ( \x-1, {(2-2/\x)} )
		-- cycle;
		
		\fill[heatcol]
		(4.5,3.5) -- (0,3.5) --
		plot[domain=1:3.5615528] ( \x-1, {(2-2/\x)} )
		-- plot [domain=3.5615528:5.5] ( \x-1, {3-4/(\x-1)} )
		-- cycle;
		
		
		\draw[loosely dotted] (0,1.438447) node [left] {$2-d_*(2)$} -- (3.5615528-1,1.438447);
		
		\draw[loosely dotted] (0,3) node [left] {$3$} -- (4.5,3); 
		
		\draw[thick] (3.5615528-1,0) node[below] {$p_S(2)$} -- (3.5615528-1,1.438447);
		
		\draw[thick]
		plot [domain=3.5615528:5.5] ( \x-1, {3-4/(\x-1)} );	
		
		\draw
		plot[domain=1:3.5615528] ( \x-1, {(2-2/\x)} );


		
		\node at (0,0) {};
		\node [above left] at (0,0) {$0$};
		\node [below right] at (0,0) {$1$};
		
		\draw[->]  (0,0) -- (0,3.7) node [left] {$\sqrt{\delta}$};
		\draw[->]  (0,0) -- (4.7,0) node [below] {$p$};

		\node [below] at (3,2.5) {$p=p_F\left(2-\tfrac{1+\sqrt{\delta}}{2}\right)$};
		\node [below left] at (1.65,0.7) {$p=\tfrac{2}{2-\sqrt{\delta}}$};

		\node[left, opacity=0] at (0,0) {$2-d_*(2)$};
		\node[left, opacity=0] at (0,0) {$n-d_*(n)$};
		
		\end{scope}
		\end{tikzpicture}
	}
	\quad
	\subfloat[][Case $n\ge3$.]{
		\begin{tikzpicture}[scale=1.5, semithick]
		
		\begin{scope}[font=\footnotesize]
		
		\fill[wavecol]
		(2.41421356-1,0) -- (0,0) --
		plot[domain=1:2.41421356] ( \x-1, {(3-2/\x)/1.5} )
		-- cycle;
		
		\fill[heatcol]
		(4.5,3.5) -- (0,3.5) -- (0,1/1.5) --
		plot[domain=1:2.41421356] ( \x-1, {(3-2/\x)/1.5} )
		-- plot [domain=2.41421356:5.5] ( \x-1, {(5-4/(\x-1))/1.5} )
		-- cycle;
		
		
		\draw[loosely dotted] (0,2.171572875/1.5) node [left] {$n-d_*(n)$} -- (2.41421356-1,2.171572875/1.5);
		
		\draw[loosely dotted] (0,1/1.5) node [left] {$n-2$} -- (2.41421356-1,1/1.5);
		
		\draw[loosely dotted] (0,5/1.5) node [left] {$2n-1$} -- (4.5,5/1.5); 
		
		\draw[thick] (2.41421356-1,0) node[below] {$p_S(n)$} -- (2.41421356-1,2.171572875/1.5);
		
		\draw[thick]
		plot [domain=2.41421356:5.5] ( \x-1, {(5-4/(\x-1))/1.5} );	
		
		\draw
		plot[domain=1:2.41421356] ( \x-1, {(3-2/\x)/1.5} );


		
		\node at (0,0) {};
		\node [above left] at (0,0) {$0$};
		\node [below right] at (0,0) {$1$};
		
		\draw[->]  (0,0) -- (0,3.7) node [left] {$\sqrt{\delta}$};
		\draw[->]  (0,0) -- (4.7,0) node [below] {$p$};

		\node[below right] at (0.25,1.15) {$p=\tfrac{2}{n-\sqrt{\delta}}$};
		\node[below right] at (2.5,2.4) {$p=p_F\left(n-\tfrac{1+\sqrt{\delta}}{2}\right)$};

		\node[left, opacity=0] at (0,0) {$2-d_*(2)$};
		\node[left, opacity=0] at (0,0) {$n-d_*(n)$};
		
		\end{scope}
		\end{tikzpicture}
	}
	\caption{Here we collect the results from Theorem~\ref{thm_negmass}. If $(p,\sqrt{\delta})$ is in the blue area, $T_{\e} \lesssim \e^{-2p(p-1)/\gamma_S(p,n)}$, hence the lifespan estimate is wave-like.
		Otherwise, if $(p,\sqrt{\delta})$ is in the red area, $T_{\e} \lesssim \e^{-(p-1)/\gamma_F(p,n-(1+\sqrt{\delta})/2)}$ and the lifespan is heat-like.
		Note that this figure represents also the results of Theorem~\ref{thm1} for the case $\mu_1=0$, $\mu_2 \le 1/4$.}
	\label{fig3}
\end{figure}
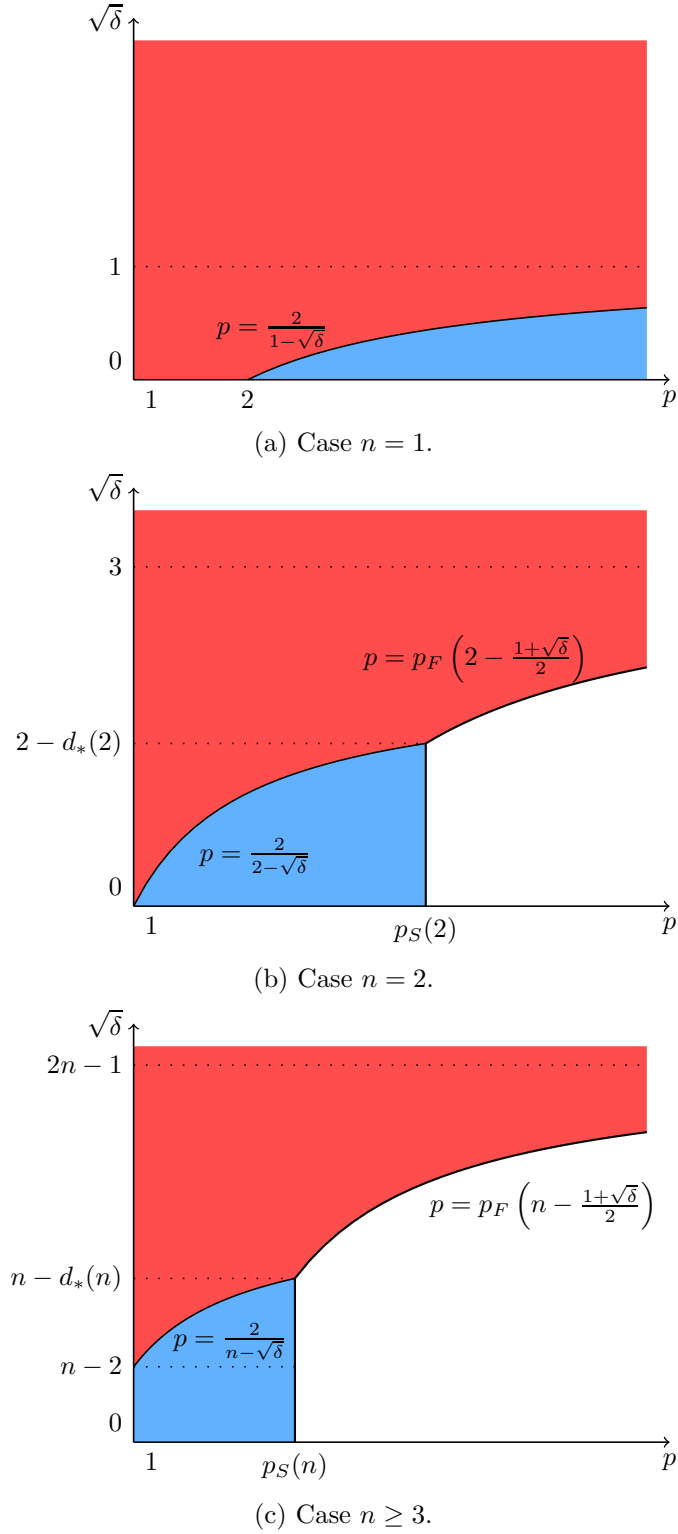

\begin{remark}
	As a direct consequence of Remark\til\ref{rem:conj1} \& \ref{rem:conj2}, we expect that $p_{\delta}(n)$ is the critical exponent and that the lifespan estimates presented in Theorem\til\ref{thm_negmass} are optimal, except on the \emph{transition curve} (in the $(p,\delta)$-plane) defined by
	\begin{equation*}
	p= \frac{2}{n-\sqrt{\delta}}
	\quad\text{for $n-2<\sqrt{\delta}<n-d_*(n)$ and $1<p \le p_{\delta}(n)$,}
	\end{equation*}
	on which we presume a logarithmic gain can appear.
	
 	Moreover, we expect that, if $p=p_{\delta}(n)$, then
	\begin{equation*}
	T_\e \sim 
	\left\{
	\begin{aligned}
	& \exp\left( C \e^{-p(p-1)}\right)
	&&\text{if $n\ge2$, $\sqrt{\delta}< n-d_*(n)$, 
	}
	\\
	& \exp\left( C \e^{-(p-1)}\right)
	&&\text{if $n\ge2$, $n-d_*(n) < \sqrt{\delta}<2n-1$, 
	}
	\end{aligned}
	\right.
	\end{equation*}
	for some constant $C>0$. If $\sqrt{\delta}=n-d_*(n)$ and $p=p_{\delta}(n)$, we presume a lifespan estimate of different kind.
\end{remark}



\section{Kato's type lemma}\label{sec:kato}
The principal ingredient we will employ in the demonstration of our theorems is the following Kato's type lemma. Although this tool is well known and used in the literature, here we will reformulate it in such a way, in the following sections, we can directly apply it to obtain not only the condition to find the possible critical exponent, but also the upper lifespan estimate. We will prove it using the so called iteration argument.

\begin{lemma}\label{kato}
	Let $p>1$, $a,b \in\R$ satisfy
	\begin{equation*}
		\gamma := 2[(p-1)a - b + 2] >0.
	\end{equation*}
	Assume that $F \in C([0,T))$ satisfies, for $t \ge T_0$,
	\begin{gather}
		\label{hp1}
		F(t) \ge EA t^a \left[\ln(1+t)\right]^c
		,
		\\
		\label{hp2}
		F(t) \ge
		B
		\int_{T_0}^{t}ds \int_{T_0}^{s} r^{-b} F(r)^p dr
		,
	\end{gather}
	where $c,T_0 \ge0$ and $E,A,B >0$.
	Then, for $\widetilde{T} \ge T_0$ we have that
	\begin{equation*}
		T<C\widetilde{T}
	\end{equation*}
	holds, assumed that
	\begin{equation}
	\label{hp:ttilde}
		E \widetilde{T}^{\frac{\gamma}{2(p-1)}}
		\left[\ln(1+\widetilde{T})\right]^c
		=1,
	\end{equation}
	where $C$ is a constant independent of $E$.
\end{lemma}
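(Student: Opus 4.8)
The plan is to run the standard iteration (slicing) argument on the assumed continuous solution $F$ on $[0,T)$: I will produce a sequence of ever-improving pointwise lower bounds for $F$, and blow-up of these bounds will force the upper bound on $T$. After harmlessly enlarging $T_0$ to $\max(T_0,e-1)$ (this only weakens \eqref{hp1}–\eqref{hp2}, and guarantees $\ln(1+r)\ge1$ for $r\ge T_0$), I would prove by induction the family of estimates
\[
F(t) \ge C_j\, t^{a_j}\,[\ln(1+t)]^{b_j}, \qquad t\ge T_0, \quad (\star_j)
\]
the base case $(\star_0)$ being exactly \eqref{hp1} with $C_0=EA$, $a_0=a$, $b_0=c$. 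Since \eqref{hp1} forces $F>0$ on $[T_0,\infty)$, we may raise $(\star_j)$ to the $p$-th power and insert it into \eqref{hp2}; evaluating the resulting double integral yields $(\star_{j+1})$ with the recursions $a_{j+1}=pa_j-b+2$, $b_{j+1}=pb_j$, and $C_{j+1}=B K_j C_j^p$, where $K_j$ is the constant generated by the integral estimate.

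Next I would solve the linear recursions. The affine map $a\mapsto pa-b+2$ has fixed point $a^*=(b-2)/(p-1)$, so $a_j-a^*=p^j(a-a^*)=p^j\,\gamma/[2(p-1)]$ (this is where $\gamma>0$ enters, making $a_j\to+\infty$), while $b_j=p^jc$. Thus both $a_j$ and $b_j$ grow like $p^j$, and crucially their ratio stays bounded. For the constants I set $\kappa_j:=p^{-j}\log C_j$, which satisfies $\kappa_{j+1}=\kappa_j+p^{-(j+1)}(\log B+\log K_j)$. Provided the integral estimate gives $\log K_j=O(j)$ (the dominant loss being $-\log[(pa_j-b+1)(pa_j-b+2)]\sim-2(j+1)\log p$), the series $\sum_j p^{-(j+1)}(\log B+\log K_j)$ converges, so $\kappa_j\to\kappa_\infty=\log(EA)+\Sigma$ with $\Sigma$ finite and independent of $E$.

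The heart of the proof, and the step I expect to be the main obstacle, is precisely the lower bound
$\int_{T_0}^t ds\int_{T_0}^s r^{pa_j-b}[\ln(1+r)]^{pb_j}\,dr \gtrsim K_j\, t^{a_{j+1}}[\ln(1+t)]^{b_{j+1}}$
holding uniformly enough in $j$ that $K_j$ decays only polynomially. The difficulty is that the logarithmic exponent $pb_j=p^{j+1}c$ blows up, so naively freezing $[\ln(1+r)]^{pb_j}$ at a fixed fraction of the endpoint would cost a factor $(\ln(1+\lambda s)/\ln(1+s))^{p^{j+1}c}$ that could be exponentially small. I would resolve this by integrating only over $[\lambda_j s,s]$ with $\lambda_j=\exp(-1/(pa_j-b+1))$, so that the power gain $1-\lambda_j^{\,pa_j-b+1}=1-e^{-1}$ is fixed, while the logarithmic loss is comparable to $\exp\!\big(-pb_j/[(pa_j-b+1)\ln(1+s)]\big)$; because $pb_j/(pa_j-b+1)$ is bounded (numerator and denominator both $\sim p^{j+1}$), this loss is bounded below uniformly in $j$. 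This is exactly what makes $\log K_j=O(j)$ rather than $O(p^j)$, and the same slicing applied to the outer $s$-integral completes the estimate.

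Finally I would pass to the limit. For fixed $t\in[T_0,T)$, $(\star_j)$ reads
\[
F(t) \ge \exp\Big( p^j\big[\kappa_j + \tfrac{\gamma}{2(p-1)}\log t + c\log\ln(1+t)\big] + a^*\log t\Big).
\]
If the bracket had a strictly positive limit the right-hand side would tend to $+\infty$, contradicting $F(t)<\infty$; hence $\kappa_\infty+\tfrac{\gamma}{2(p-1)}\log t+c\log\ln(1+t)\le0$ for every $t<T$, which exponentiates to $E\,t^{\gamma/[2(p-1)]}[\ln(1+t)]^c\le C^*$ with $C^*=e^{-\Sigma}/A$ independent of $E$. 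The left-hand side is increasing in $t$ and equals $1$ at $t=\widetilde T$ by \eqref{hp:ttilde}, so at $t=C\widetilde T$ it is at least $C^{\gamma/[2(p-1)]}$; choosing $C$ with $C^{\gamma/[2(p-1)]}>C^*$ — a choice depending on $p,a,b,c,B,A$ but not on $E$ — shows that $C\widetilde T$ cannot lie below $T$, i.e. $T<C\widetilde T$, which is the assertion.
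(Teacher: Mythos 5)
Your proposal is correct in substance, but it takes a genuinely different route through the step you yourself flag as the main obstacle. The paper's proof sidesteps that obstacle entirely: it runs the iteration on $[\widetilde{T},\infty)$ from the outset, with the ansatz $F(t)\ge D_j[\ln(1+\widetilde{T})]^{c_j}\,t^{-b_j}(t-\widetilde{T})^{a_j}$, so the logarithm is frozen at the constant value $\ln(1+\widetilde{T})$ already in the base step (legitimate because $c\ge0$ and $t\ge\widetilde{T}\ge T_0\ge 0$), and only pure powers of $r$ and of $(r-\widetilde{T})$ are ever integrated; moreover the splitting into $t^{-b_j}(t-\widetilde{T})^{a_j}$ with $a_j,b_j\ge0$ (built from $[a]_\pm$, $[b]_\pm$) keeps every exponent under the integral sign nonnegative, so the elementary identity $\int_{\widetilde{T}}^{s}(r-\widetilde{T})^{m}\,dr=(s-\widetilde{T})^{m+1}/(m+1)$ applies at every step with no case distinctions, and the blow-up is exhibited directly at any $t\ge C\widetilde{T}$. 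You instead keep the variable factor $[\ln(1+t)]^{p^{j}c}$ in the ansatz, pay for it with the $j$-dependent slicing over $[\lambda_j s,s]$, and extract the lifespan bound at the end from the necessary condition $E\,t^{\gamma/(2(p-1))}[\ln(1+t)]^{c}\le C^{*}$ for $t<T$. Your slicing is sound --- the ratio $pb_j/(pa_j-b+1)$ is indeed bounded in $j$, so the logarithmic loss per step is uniform and $\log K_j=O(j)$ as claimed --- and your route produces a somewhat more transparent final inequality; the paper's route is shorter and avoids the uniform-in-$j$ estimate altogether.

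Two loose ends you should tie up. First, $\lambda_j=\exp(-1/(pa_j-b+1))$ only makes sense, and only lies in $(0,1)$, when $pa_j-b+1>0$; since $a_j\to+\infty$ this can fail for at most finitely many $j$ (the lemma allows arbitrary $a,b$ with $\gamma>0$), and those steps must be handled separately, e.g.\ by slicing over $[s/2,s]$, contributing only finitely much to $\Sigma$. Second, the slice $[\lambda_j s,s]$ sits inside the integration domain $[T_0,s]$ only for $s\ge T_0/\lambda_j$, so the threshold in $(\star_j)$ creeps up from $T_0$ to $T_0\prod_k\lambda_k^{-1}$ (a convergent product, since $-\ln\lambda_k\sim p^{-k}$); together with your replacement of $T_0$ by $\max(T_0,e-1)$, this means the necessary condition is available only for $t$ above a fixed threshold $T_1$, and the degenerate case $C\widetilde{T}<T_1$ (possible only when $\widetilde{T}$, hence $E^{-1}$, is bounded) deserves a sentence. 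Neither issue is fatal, but both must be addressed for the argument to cover the lemma as stated.
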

\begin{proof}
	Let $\widetilde{T}$ be as in the statement of the lemma and start with the ansatz
	\begin{equation}
	\label{ansatz}
		F(t) \ge D_j \left[\ln(1+\widetilde{T})\right]^{c_j}
		t^{-b_j} (t-\widetilde{T})^{a_j}
		\quad\text{for $t \ge \widetilde{T}$, \quad $j=1,2,3,\dots$}
	\end{equation}
	where $D_j, a_j, b_j, c_j$ are positive constants to be determined later. Due to hypothesis \eqref{hp1}, observe that \eqref{ansatz} is true for $j=1$ with
	\begin{equation}
	\label{basis}
		D_1 = EA, \quad a_1 = [a]_+, \quad b_1=[a]_-, \quad c_1 = c,
	\end{equation}
	where $[x]_\pm := (|x| \pm x)/2$.
	Plugging \eqref{ansatz} into \eqref{hp2}, we get
	\begin{equation*}
		\begin{split}
		F(t) &\ge D_j^p B
		\int_{\widetilde{T}}^{t}ds \int_{\widetilde{T}}^{s}
		\left[\ln(1+\widetilde{T})\right]^{pc_j}
		r^{-b-pb_j} (r-\widetilde{T})^{pa_j}
		dr
		\\
		&\ge \frac{D_j^p B}{(pa_j+ [b]_- +2)^2} \left[\ln(1+\widetilde{T})\right]^{pc_j} t^{-pb_j-[b]_+ }(t-\widetilde{T})^{pa_j+ [b]_- +2}
		\quad\text{for $t\ge\widetilde{T}$,}
		\end{split}
	\end{equation*}
	and then we can define the sequences $\{D_j\}_{j\in\N}$, $\{a_j\}_{j\in\N}$, $\{b_j\}_{j\in\N}$, $\{c_j\}_{j\in\N}$ by
	\begin{equation*}
	D_{j+1}=\frac{
	D_{j}^p B}{(pa_{j}+ [b]_- +2)^2},
	\quad
	a_{j+1}=pa_j+[b]_- +2,
	\quad
	b_{j+1}=pb_j+[b]_+,
	\quad
	c_{j+1}=pc_j,
	\end{equation*}
	to establish \eqref{ansatz} with $j$ replaced by $j+1$.
	It follows from the previous relations and \eqref{basis} that for $j\ge1$
	\begin{equation*}
	a_j= p^{j-1} \left([a]_+ +\frac{[b]_- +2}{p-1}\right) - \frac{[b]_- +2}{p-1},
	\quad
	b_j= p^{j-1} \left( [a]_- + \frac{[b]_+}{p-1} \right) - \frac{[b]_+}{p-1},
	\quad
	c_j= p^{j-1}c.
	\end{equation*}
	In particular, we obtain that
	\begin{equation}
	\label{24a}
		pa_j+[b]_- +2
		=a_{j+1}
		\le p^j\left([a]_+ +\frac{[b]_- +2}{p-1}\right)
		\Longrightarrow
		D_{j+1}\ge \widetilde{C} p^{-2j} D_j^p,
	\end{equation}
	where $\widetilde{C}:={B}/{\left\{ [a]_+ +{([b]_- +2)}/{(p-1)}\right\}^2} >0$.
	From \eqref{24a} and $D_1=EA$, by an inductive argument we infer that
	\[
	D_j\ge\exp\left\{p^{j-1}\left[\ln(EA) -S_j \right]\right\},
	\]
	where
	\[
	S_j:=\sum_{k=1}^{j-1}\frac{2k\ln p-\ln \widetilde{C}}{p^k}.
	\]
	Since $\sum_{k=0}^{\infty} x^k = 1/(1-x)$ and $\sum_{k=1}^{\infty} kx^k= x/(1-x)^2$ when $|x|<1$, we obtain
	\[
	S_\infty := \lim_{j \to +\infty} S_j = \ln\{ \widetilde{C}^{p/(1-p)} p^{2p/(1-p)^2}\}.
	\]
	Moreover $S_j$ is a sequence definitively increasing. Hence we obtain that
	\begin{equation*}
		D_j \ge (EA e^{-S_\infty})^{p^{j-1}}
	\end{equation*}
	for $j$ sufficiently large. Let us turning back to \eqref{ansatz} and let $C>1$ a constant to be determined later. Supposing $t\ge C\widetilde{T}$, so that in particular $t-\widetilde{T}\ge (1-1/C)t$, and considering \eqref{hp:ttilde}, we have
	\begin{equation}
	\label{27}
	\begin{aligned}
	F(t)&\ge
	t^{\frac{[b]_+}{p-1}} (t-\widetilde{T})^{-\frac{[b]_- +2}{p-1}}
	\left\{
	EA e^{-S_\infty} \left[\ln(1+\widetilde{T})\right]^c t^{-[a]_- - \frac{[b]_+}{p-1}} (t-\widetilde{T})^{[a]_+ + \frac{[b]_- +2}{p-1}}
	\right\}^{p^{j-1}} \\
	&\ge
	t^{\frac{[b]_+}{p-1}} (t-\widetilde{T})^{-\frac{[b]_- +2}{p-1}}
	\left\{
	EA e^{-S_\infty} \left(1-\frac{1}{C}\right)^{[a]_+ + \frac{[b]_- +2}{p-1}} \left[\ln(1+\widetilde{T})\right]^c t^{\frac{\gamma}{2(p-1)}}
	\right\}^{p^{j-1}} \\
	&\ge
	t^{\frac{[b]_+}{p-1}} (t-\widetilde{T})^{-\frac{[b]_- +2}{p-1}}
	J^{p^{j-1}}
	\end{aligned}
	\end{equation}
	with
	$
		J := A e^{-S_\infty} \left(1-1/C\right)^{[a]_+ + \frac{[b]_- +2}{p-1}}
		C^{\frac{\gamma}{2(p-1)}}.
	$
	Since $\gamma>0$, we can choose $C>1$ big enough, such that $J>1$. Letting $j\to+\infty$ in \eqref{27}, we get $F(t)\rightarrow +\infty$. Then, $T<C\widetilde{T}$ as claimed.
\end{proof}

\begin{remark}
	We can observe that the previous lemma is still true if in \eqref{hp2} an arbitrary number of integrals appear, more precisely if we replace \eqref{hp2} with
	\begin{equation*}
		F(t) \ge
		B
		\int_{T_0}^{t} dt_1 \int_{T_0}^{t_1} dt_2 \cdots \int_{T_0}^{t_{k-1}} t_k^{-b} F(t_k)^p dt_k
		\quad\text{for $t \ge T_0$},
	\end{equation*}
	and $\gamma$ with $\gamma_k := 2[(p-1)a - b + k] $, where $k\ge1$ is an integer.
\end{remark}


\section{Proof for Theorems}\label{sec:proof}

We come now to the demonstration for Theorems\til\ref{thm1} \& \ref{thm:H=0}. In the next two subsections, we will prove some key inequalities which will be employed in the machinery of the Kato's type lemma. Applying the latter, we will find a couple of results, which will be compared in Subsection\til\ref{sub:comparison} to find the claimed ones. The proof of Theorems\til\ref{cor1} \& \ref{cor2} are clearly omitted, since they are corollaries of Theorems\til\ref{thm1} \& \ref{thm:H=0} respectively, just setting the mass equal to zero. In the end, we will sketch the proof for Theorem\til\ref{thm_negmass} in Subsection\til\ref{sub:negmass}.


\subsection{Key estimates}\label{sec:F0}

Let us define the functional
\begin{equation*}
F_0(t) := \int_{\R^n} u(x,t)dx.
\end{equation*}
Choosing the test function $\phi=\phi(x,s)$ in \eqref{eq:energy_solution} to satisfy
\begin{equation}\label{eq:testfunction}
	\text{$\phi\equiv 1$ in $\{(x,s)\in \R^n\times[0,t]:|x|\le s+R\}$,}
\end{equation}
we get
\begin{equation*}
\begin{split}
&\int_{\R^n}u_t(x,t)dx-\int_{\R^n}u_t(x,0)dx
\\
&+\int_0^tds\int_{\R^n}\frac{\mu_1}{1+s}u_t(x,s)dx
+ \int_0^t \int_{\R^n} \frac{\mu_2}{(1+s)^2} u(x,s) dx 
\\
=&\, \int_0^tds\int_{\R^n}|u(x,s)|^pdx,
\end{split}
\end{equation*}
which yields, by taking derivative with respect to $t$,
\begin{equation}
\label{2}
F_0''(t)+\frac{\mu_1}{1+t}F_0'(t)
+ \frac{\mu_2}{(1+t)^{2}}F_0(t) = \int_{\R^n}|u(x,t)|^pdx.
\end{equation}
Setting
\begin{equation*}
\label{def:kl}
\l:= 1+\sqrt{\delta} >0,
\quad
\k:= \frac{\mu_1-1-\sqrt{\delta}}{2},
\quad
	\delta := (\mu_1-1)^2-4\mu_2,
\end{equation*}
we obtain that \eqref{2} is equivalent to
\begin{equation*}
\label{3}
\frac{d}{dt} \left\{ (1+t)^{\l}
\frac{d}{dt} \left[(1+t)^{\k} F_0(t) \right]\right\}
= (1+t)^{\k+\l}\int_{\R^n}|u(x,t)|^p dx.
\end{equation*}
Integrating twice the above equality over $[0, t]$, we get
\begin{equation}
\label{4}
\begin{split}
F_0(t) = L(t) + M(t)
,
\end{split}
\end{equation}
where
\begin{gather*}
	L(t) := F_0(0)(1+t)^{-\k} + [\k F_0(0) + F'_0(0)] (1+t)^{-\k} \int_{0}^{t}(1+s)^{-\l} ds, \\
	M(t) := (1+t)^{-\k}
	\int_{0}^{t}(1+s)^{-\l} ds
	\int_{0}^{s}(1+r)^{\k+\l}dr
	\int_{\R^n}|u(x,r)|^p dx \ge 0 .
\end{gather*}

Define the functional \[\F(t):=(1+t)^{\kappa+\lambda}F_0(t)\]
and observe that $F_0$ and $\F$ implies the same blow-up results, so we will study the latter functional.
Since
\begin{equation*}
	\int_{\R^n} f(x)dx \ge0,
	\quad
	H_0 := \int_{\R^n} h(x)dx \ge0,
\end{equation*}
and they are not both equal to zero, we want to prove that exists a time $T_0>0$, independent of $\e$, such that, for $t \ge T_0$, the following estimates hold:
\begin{gather}
\label{key1}
	\F(t) \gtrsim \int_{T_0}^{t}ds \int_{T_0}^{s} r^{-(n+\k+\l)(p-1)} \F(r)^p dr,
\\
\label{key3}
\F(t) \gtrsim \e
\left\{
\begin{aligned}
& t
&\quad&\text{if $H_0=0$,}
\\
& t^{\l} \ln^{1-\sgn\delta}(1+t)
&\quad&\text{if $H_0>0$,}
\end{aligned}
\right.
\\
\label{key2}
\F(t) \gtrsim \e^p
\left\{
\begin{aligned}
& t^{\k+\l-(n+\mu_1-1)\frac{p}{2}+n+1}
&\quad&\text{if $\k-(n+\mu_1-1)\frac{p}{2}+n+1>0$,}
\\
& t^\l \ln^{2-\sgn\delta}(1+t)
&\quad&\text{if $\k-(n+\mu_1-1)\frac{p}{2}+n+1=0$,}
\\
& t^\l \ln^{1-\sgn\delta}(1+t)
&\quad&\text{if $\k-(n+\mu_1-1)\frac{p}{2}+n+1<0$.}
\end{aligned}
\right.
\end{gather}

	Thanks to the H\"older inequality and using the compact support of the solution \eqref{support}, we have
	\begin{equation}
	\label{intu1}
	\int_{\R^n}|u(x,t)|^pdx
	\gtrsim
	t^{-n(p-1)}|F_0(t)|^p
	= (1+t)^{-n(p-1)-(\k+\l)p}\F(t)^p
	\end{equation}
	for $t\gtrsim 1$.
	Considering $L$ and recalling the definition \eqref{hpindata} of $H_0$
	we obtain
	\begin{equation*}
	L(t) =
	\left\{
	\begin{aligned}
	&(1+t)^{-\k} [F_0(0)+ \e H_0 \ln(1+t)]
	&\quad&\text{if $\delta=0$,}
	\\
	&\frac{(1+t)^{-\k}}{\sqrt{\delta}} \left\{ \e H_0 + [\sqrt{\delta}F_0(0) - \e H_0](1+t)^{-\sqrt{\delta}}\right\}
	&\quad&\text{if $\delta>0$.}
	\end{aligned}
	\right.
	\end{equation*}
	So, from the condition on the initial data we get, for $t \gtrsim 1$ sufficiently large, that
	\begin{equation}
	\label{estL}
	L(t) \gtrsim \e
	\left\{
	\begin{aligned}
	&t^{-\k-\sqrt{\delta}}
	&\quad&\text{if $H_0=0$,}
	\\
	&t^{-\k}
	&\quad&\text{if $H_0>0$, $\delta>0$,}
	\\
	&t^{-\k} \ln(1+t)
	&\quad&\text{if $H_0>0$, $\delta=0$,}
	\end{aligned}
	\right.
	\end{equation}
	and in particular the positiveness of $L$ for large time. Neglecting $L$ from \eqref{4}, inserting \eqref{intu1} and recalling that $\l >0$, we get \eqref{key1}. Instead, inserting \eqref{estL} in \eqref{4} and neglecting $M$, we reach \eqref{key3}.
		
	Finally, we will prove \eqref{key2} in the next section.

\subsection{Weighted functional}\label{sec:F1}
Let us introduce
\begin{equation*}
F_1(t):=\int_{\R^n}u(x,t)\psi_1(x,t)dx,
\end{equation*}
where $\psi_1$ is the test function presented by Yordanov and Zhang in \cite{YZ06},
\begin{equation}\label{YZtest}
\psi_1(x,t):=e^{-t}\phi_1(x),
\qquad
\phi_1(x):=
\begin{cases}
\d\int_{S^{n-1}}e^{x\cdot\omega}dS_\omega & \text{for $n\ge2$},\\
e^x+e^{-x} & \text{for $n=1$},
\end{cases}
\end{equation}
which satisfies the following inequality (equation (2.5) in \cite{YZ06}):
	\begin{equation}
	\label{YZ}
	\int_{|x|\leq t+R}
	\psi_1(x,t) ^{\frac{p}{p-1}}dx
	\lesssim (1+t)^{(n-1) \left\{1-\frac{p}{2(p-1)} \right\}}.
	\end{equation}
%
We want to establish the lower bound for $F_1$. From the definition of energy solution \eqref{eq:energy_solution}, we have that
\begin{equation*}
	\begin{split}
	&\frac{d}{dt}\int_{\R^n}u_t(x,t)\phi(x,t)dx
	\\
	&-\int_{\R^n} u_t(x,t)\phi_t(x,t)dx
	-\int_{\R^n} u(x,t)\Delta\phi(x,t) dx\\
	&+\int_{\R^n}\frac{\mu_1 }{1+t}u_t(x,t)\phi(x,t)dx
	+ \int_{\R^n} \frac{\mu_2}{(1+t)^{2}}u(x,t)\phi(x,t)dx\\
	=&\ \int_{\R^n}|u(x,t)|^p\phi(x,t)dx.
	\end{split}
\end{equation*}
Integrating the above inequality over $[0,t]$, and in particular using integration by parts on the second term in the first line and on the first term in the second line, we get
\begin{equation}
\label{eqforF_1}
\begin{split}
&\int_{\R^n}u_t(x,t)\phi(x,t)dx
-\e\int_{\R^n}g(x)\phi(x,0)dx\\
&-\int_{\R^n} u(x,t)\phi_t(x,t)dx + \e \int_{\R^n} f(x)\phi_t(x,0)dx
\\
& + \int_0^t ds \int_{\R^n} u(x,s)\phi_{tt}(x,s)dx - \int_0^tds\int_{\R^n}
u(x,s)\Delta\phi(x,s) dx \\
&+ \int_{\R^n} \frac{\mu_1}{1+t}u(x,t)\phi(x,t)dx - \e \mu_1 \int_{\R^n} f(x)\phi(x,0)dx \\
&+ \int_0^t ds \int_{\R^n} u(x,s) \frac{\mu_1}{(1+s)^2}\phi(x,s)dx
- \int_0^t ds \int_{\R^n} u(x,s) \frac{\mu_1}{1+s}\phi_t(x,s)dx\\
&+ \int_0^tds\int_{\R^n} \frac{\mu_2}{(1+s)^{2}}u(x,s)\phi(x,s)dx
\\
=&\ \int_0^tds\int_{\R^n} |u(x,s)|^p\phi(x,s)dx.
\end{split}
\end{equation}
Setting
\[
\phi(x,t)=\psi_1(x,t)=e^{-t}\phi_1(x)
\quad\text{on $\supp u$},
\]
then we have
\[
\phi_t=-\phi,\quad \phi_{tt}=\Delta\phi \quad\text{on $\supp u$}.
\]
Hence we obtain from \eqref{eqforF_1}
\[
\begin{split}
&F_1'(t)+2F_1(t)
+ \frac{\mu_1}{1+t} F_1(t)
+ \int_0^t \left\{\frac{\mu_1}{1+s} + \frac{\mu_1+\mu_2}{(1+s)^2} \right\} F_1(s)ds \\
=& \ \e \int_{\R^n}\left\{ (1+\mu_1)f(x)+g(x)\right\}\phi_1(x)dx +\int_0^tds\int_{\R^n}|u(x,s)|^p \phi(x,s) dx,
\end{split}
\]
from which, after a derivation,
\begin{equation}
\label{6}
	F''_1(t) + \left(2+\frac{\mu_1}{1+t}\right) F'_1(t) + \left(\frac{\mu_1}{1+t} + \frac{\mu_2}{(1+t)^2}\right) F_1(t)
	= \int_{\R^n}|u(x,t)|^p \phi(x,t) dx
\end{equation}
Let us define the multiplier
\begin{equation*}
	m(t) := e^{t} (1+t)^{\frac{\mu_1-1}{2}} >0.
\end{equation*}
Then, multiplying equation \eqref{6} by $m(t)$, using for convenience the change of variables $z := 1+t$ and denoting
\begin{equation}
\label{def:B}
\B(z) := m(t)F_1(t),
\end{equation}
we obtain that $\B$ satisfies the nonlinear modified Bessel's equation
\begin{equation}
\label{kummer}
z^2 \frac{d^2\B}{dz^2} (z)
+ z \frac{d\B}{dz} (z)
- \left( z^2 +\frac{\delta}{4} \right) \B(z)
= N(z)
\end{equation}
with initial data
\begin{equation}
\label{kummer_data}
	\B(1) = \e \int_{\R^n} f(x)\phi_1(x)dx ,
	\quad
	\frac{d\B}{dz}(1) =
	\e \int_{\R^n} \left\{ \frac{\mu_1-1}{2} f(x)+g(x) \right\}\phi_1(x)dx ,
\end{equation}
and where
\begin{equation*}
	N(z) := z^2 m\left(z-1\right)\int_{\R^n}|u(x,z-1)|^p \phi(x,z-1) dx \ge 0.
\end{equation*}
Now we want to estimate $\B$.

\textit{Homogeneous problem.} Let us firstly consider the homogeneous Cauchy problem
\begin{equation*}
\left\{
\begin{aligned}
& z^2\frac{d^2\B_0}{dz^2} (z)
+ z \frac{d\B_0}{dz} (z)
- \left( z^2 +\frac{\delta}{4} \right) \B_0(z)
= 0,
\quad\text{$z \ge 1$}, \\
& \B_0(1) = \B(1),
\quad
\frac{d\B_0}{dz}(1) =
\frac{d\B}{dz}(1).
\end{aligned}
\right.
\end{equation*}
The fundamental solutions are the modified Bessel's functions $B^+_{\sqrt{\delta}/2}(z) := I_{\sqrt{\delta}/2}(z)$ and $B^-_{\sqrt{\delta}/2}(z) :=K_{\sqrt{\delta}/2}(z)$. Then we have
\begin{equation*}
	\B_0(z) =\e c_+ B^+_{\sqrt{\delta}/2}(z) + \e c_- B^-_{\sqrt{\delta}/2}(z),
\end{equation*}
where, thanks to equations (9.6.15) and (9.6.26) from Chapter~9 in~\cite{AS}, it holds
\begin{align*}
	c_\pm &= \pm \e^{-1} \left\{ \frac{d\B_0}{dz}(1) - \frac{\sqrt{\delta}}{2}\B_0(1) \right\} B^\mp_{\sqrt{\delta}/2}(1)
	+ \e^{-1} \B_0(1) B^\mp_{1+\sqrt{\delta}/2}(1) \\
	&= \pm B^\mp_{\sqrt{\delta}/2}(1)
	\int_{\R^n} h(x)\phi_1(x) dx
	+ \left[ \mp \sqrt{\delta} B^\mp_{\sqrt{\delta}/2}(1)
	+ B^\mp_{1+\sqrt{\delta}/2}(1) \right]
	\int_{\R^n} f(x) \phi_1(x) dx
	\\&=
	\left\{
	\begin{aligned}
	&\pm B^\mp_{0}(1)
	\int_{\R^n} h(x)\phi_1(x) dx
	+ B^\mp_{1}(1)
	\int_{\R^n} f(x) \phi_1(x) dx
	&\quad&\text{if $\delta=0$,}
	\\
	&\pm B^\mp_{\sqrt{\delta}/2}(1)
	\int_{\R^n} h(x)\phi_1(x) dx
	+ B^\mp_{-1+\sqrt{\delta}/2}(1)
	\int_{\R^n} f(x) \phi_1(x) dx
	&\quad&\text{if $\delta>0$.}
	\end{aligned}
	\right.
\end{align*}
Due to the assumptions on the initial data and recalling that $B^+_\nu(z), B^-_\nu(z)>0$ when $\nu>-1$ and $z>0$, we can observe that $c_+>0$ (see also Remark~\ref{rem:weaker-initial-data}).
Exploiting the asymptotic expansions for the modified Bessel's functions (equations (9.7.1) and (9.7.2) from Chapter~9 in~\cite{AS}), we have that
\begin{equation*}
	\B_0(z) = \e \left[ c_+ \frac{e^z}{\sqrt{2\pi z}} + c_- \sqrt{\frac{\pi}{2z}} e^{-z} \right] (1+O(1/z)).
\end{equation*}
Then, there exist two constants $C>0$ and $z_0 \ge 1$, both not depending on $\e$, such that
\begin{equation}
\label{estB0}
	\B_0(z) \ge C\e z^{-1/2} e^z \quad\text{for $z\ge z_0$.}
\end{equation}

\textit{Inhomogeneous problem.} Let us consider now the Cauchy problem
\begin{equation*}
\left\{
\begin{aligned}
& z^2\frac{d^2\B_N}{dz^2} (z)
+ z \frac{d\B_N}{dz} (z)
- \left( z^2 +\frac{\delta}{4} \right) \B_N(z)
= N(z),
\quad\text{$z \ge 1$}, \\
& \B_N(1) =
\frac{d\B_N}{dz}(1) =
0.
\end{aligned}
\right.
\end{equation*}
Exploiting the method of variation of parameters, we have that
\begin{equation*}
	\B_N(z) = B^+_{\sqrt{\delta}/2}(z) \int_{1}^{z} \xi B^-_{\sqrt{\delta}/2}(\xi) N(\xi) d\xi
	- B^-_{\sqrt{\delta}/2}(z) \int_{1}^{z} \xi B^+_{\sqrt{\delta}/2}(\xi) N(\xi) d\xi.
\end{equation*}
Recalling that $N(z)\ge0$ and using the fact that $B^+_{\sqrt{\delta}/2}(z)$ is increasing and $B^-_{\sqrt{\delta}/2}(z)$ is decreasing respect to the argument for $z > 0$ (due to relations (9.6.26) from Chapter~9 in~\cite{AS}), we get that
\begin{equation}
\label{estBN}
	\B_N(z) \ge 0 \quad\text{for $z\ge1$.}
\end{equation}

Since the solution $\B$ to the Cauchy problem \eqref{kummer}-\eqref{kummer_data} is the sum of $\B_0$ and $\B_N$ and from estimates \eqref{estB0} and \eqref{estBN}, we get
\begin{equation*}
	\B(z) =
	\B_0(z) +\B_N(z)
	\gtrsim \e z^{-1/2} e^z \quad\text{for $z\ge z_0$}.
\end{equation*}
So, recalling the definition \eqref{def:B} of $\B$ and changing again the variables, we reach
\begin{equation}
\label{estF1}
	F_1(t) \gtrsim \e (1+t)^{-\mu_1/2} \quad\text{for $t \gtrsim 1$.}
\end{equation}
By H\"older inequality and using estimates \eqref{YZ} and \eqref{estF1}, we obtain
\begin{equation*}
\begin{split}
\int_{\R^n} |u(x,t)|^p dx
&\geq
\left( \int_{\R^n} |\psi_1(x,t)|^{p/(p-1)}\right)^{1-p} |F_1(t)|^p
\\
&\gtrsim \e^p
  (1+t)^{-(n+\mu_1-1)\frac{p}{2}+n-1}
  \quad\text{for $t\gtrsim 1 $},
\end{split}
\end{equation*}
which inserted in \eqref{4} and recalling that $L(t)$ is positive for $t$ great enough, give us
\begin{equation*}
\begin{split}
	%
	F_0(t) & \gtrsim \e^p
	(1+t)^{-\k} \int_{T_1}^{t}
	(1+s)^{-\l} ds \int_{T_1}^{s}
	(1+r)^{q+\sqrt{\delta}-1}dr
	\quad\text{for $t \ge T_1 $},
	\end{split}
\end{equation*}
for a suitable $T_1>0$, where we define
\begin{equation}
\label{def:q}
	q \equiv q(p)
	:= \k-(n+\mu_1-1)\frac{p}{2}+n+1.
\end{equation}
We obtain, for large time $t\gtrsim1$, that:
\begin{itemize}
	\item if $q>-\sqrt{\delta}$, then
	\begin{equation*}
		F_0(t) \gtrsim \e^p t^{-\k}
		\left\{
		\begin{aligned}
		& t^q
		&\quad&\text{if $q>0$,}
		\\
		& \ln(1+t)
		&\quad&\text{if $q=0$,}
		\\
		& 1
		&\quad&\text{if $q<0$;}
		\end{aligned}
		\right.
	\end{equation*}
	
	\item if $q=-\sqrt{\delta}$, then
		\begin{equation*}
			F_0(t) \gtrsim \e^p t^{-\k}
			\left\{
			\begin{aligned}
			& 1
			&\quad&\text{if $\delta>0$,}
			\\
			& \ln^2(1+t)
			&\quad&\text{if $\delta=0$;}
			\end{aligned}
			\right.
		\end{equation*}
	
	\item if $q<-\sqrt{\delta}$, then
		\begin{equation*}
			F_0(t) \gtrsim \e^p t^{-\k}
			\left\{
			\begin{aligned}
			& 1
			&\quad&\text{if $\delta>0$,}
			\\
			& \ln(1+t)
			&\quad&\text{if $\delta=0$.}
			\end{aligned}
			\right.
		\end{equation*}
\end{itemize}
Summing all up, we deduce the relations in \eqref{key2}.


\subsection{Application of Kato's type lemma}\label{sec:kato_appl}
Now we will proceed applying the Kato's type lemma, as presented in Section \ref{sec:kato}, two times to two different couples of inequalities, and subsequently we will infer which result is optimal. The calculations of this subsection are all elementary (and quite tedious), so we will only sketch them.

Apply Lemma~\ref{kato} to the inequalities \eqref{key1} and \eqref{key3}, with
\begin{gather*}
	E=\e,
	\\
	a = \left\{
	\begin{aligned}
	& 1
	&\quad&\text{if $H_0=0$,}
	\\
	& \l
	&\quad&\text{if $H_0>0$.}
	\end{aligned}
	\right.
	\qquad
	b=(n+\k+\l)(p-1),
	\\
	c = \left\{
	\begin{aligned}
	& 0
	&\quad&\text{if $H_0=0$,}
	\\
	& 1-\sgn\delta
	&\quad&\text{if $H_0>0$,}
	\end{aligned}
	\right.
	\\
	1< p < p_c :=
	\left\{
	\begin{aligned}
	& p_F(n+\k+\sqrt{\delta})
	&\quad&\text{if $H_0=0$,}
	\\
	& p_F(n+\k)
	&\quad&\text{if $H_0>0$,}
	\end{aligned}
	\right.
	\\
	\gamma =
	\left\{
	\begin{aligned}
	& 2\gamma_F(p,n+\k+\sqrt{\delta})
	&\quad&\text{if $H_0=0$,}
	\\
	& 2\gamma_F(p,n+\k)
	&\quad&\text{if $H_0>0$.}
	\end{aligned}
	\right.
\end{gather*}
We chose $p \in (1,p_c)$ since this is equivalent to $\gamma>0$ for $p>1$.
Then, for every $p\in(1,p_c)$, we have $T_\e \lesssim \widetilde{T} \equiv \widetilde{T}(\e)$, with
\begin{equation}
\label{Ttil}
\e^p \widetilde{T}^{\frac{p\gamma}{p-1}}
\left[\ln(1+\widetilde{T})\right]^{pc}
=1.
\end{equation}

Apply Lemma~\ref{kato} to the inequalities \eqref{key1} and \eqref{key2}, with
\begin{gather*}
\overline{E}=\e^p,
\\
\overline{a} = \left\{
\begin{aligned}
& \l+q
&\quad&\text{if $q>0$,}
\\
& \l
&\quad&\text{if $q \le 0$,}
\end{aligned}
\right.
\qquad
\overline{b}=(n+\k+\l)(p-1),
\\
\overline{c} = \left\{
\begin{aligned}
& 0
&\quad&\text{if $q>0$,}
\\
& 2-\sgn\delta
&\quad&\text{if $q=0$,}
\\
& 1-\sgn\delta
&\quad&\text{if $q<0$,}
\end{aligned}
\right.
\\
1 < p < \overline{p}_c,
%
\qquad
\overline{\gamma} =
\left\{
\begin{aligned}
& \gamma_S(p,n+\mu_1)
&\quad&\text{if $q>0$,}
\\
& 2\gamma_F(p,n+\k)
&\quad&\text{if $q \le 0$,}
\end{aligned}
\right.
\end{gather*}
where $q$ is the one in \eqref{def:q} and $\overline{p}_c \in (1,+\infty]$ is defined as the exponent such that $\overline{\gamma} >0$ for $1<p<\overline{p}_c$ (we will explicitly define this exponent later).
Then, for every $p\in(1,\overline{p}_c)$, we have $T_\e \lesssim \widetilde{S} \equiv \widetilde{S}(\e)$, with
\begin{equation}
\label{Stil}
\e^p \widetilde{S}^{\frac{\overline{\gamma}}{p-1}}
\left[\ln(1+\widetilde{S})\right]^{\overline{c}}
=1.
\end{equation}
In both cases, since \eqref{key1}, \eqref{key2} and \eqref{key3} are true for $t\ge T_0$ with some time $T_0$, and since we need to require $\widetilde{T},\widetilde{S} \ge T_0$ to apply the Kato's type lemma, we need to impose also that $\e$ is sufficiently small.
From these computations, we deduce the blow-up for $1<p<p_{k} := \max\{p_c, \overline{p}_c\}$ and the upper lifespan estimate $T_\e \lesssim \min\{\widetilde{T}, \widetilde{S}\}$. We will go further in the analysis to clarify these values.

Before to move forward, in order to understand the definition of $\widetilde{S}$ we need to write down more explicitly the definitions of $\overline{c}$, $\overline{p}_c$ and $\overline{\gamma}$, since they depend on $q$ and therefore on the exponent $p$. Firstly, recall the definition \eqref{def:p*} of $p_*=p_*(n+\mu_1,n-\sqrt{\delta})$
and that, by \eqref{p*rel}, for $p>1$ and $\mu_1 +n \neq 1$, it holds
\begin{equation*}
	p=p_*
	\Longleftrightarrow
	q(p)=0
	\Longleftrightarrow
	\gamma_S(p,n+\mu_1) = 2\gamma_F(p,n+\k).
\end{equation*}
We will consider several cases, due to the generality of the constants involved, but what lies beneath is the elementary comparison between the parabola $\gamma_S$ (line in the case $\mu_1+n=1$) and the line $2\gamma_F$. Also, since we want to be in the hypothesis of Kato's type lemma, our interest is directed to $\overline{\gamma}>0$, and so we explicit its definition only for the range $1<p<\overline{p}_c$.

\textit{Case 1: $n+\mu_1 > 1$.} Recalling the definition \eqref{def:d*1}--\eqref{def:d*2} of $d_* := d_*(n+\mu_1)$ and the relation \eqref{d*rel}, we have that the following hold true:
\begin{gather*}
	0< d_* <2,
	\\
	\sqrt{\delta} = n-d_* \Longleftrightarrow
	p_* = p_S(n+\mu_1) = p_F(n+\k) = \frac{2}{d_*}.
\end{gather*}
Taking also in account that
\begin{align*}
	\sqrt{\delta} \le
	n-d_*(n+\mu_1)
	&\Longleftrightarrow
	p_* \ge p_S(n+\mu_1)
	,
	\\
	\sqrt{\delta} < n+2
	&\Longleftrightarrow
	p_*>1
	,
	\\
	q > 0 &\Longleftrightarrow p < p_*,
\end{align*}
we have:
\begin{itemize}
	\item if $\sqrt{\delta} \le
	n-d_*$, then
	\begin{gather*}
	\overline{p}_c = p_S(n+\mu_1),
	\\
	\overline{\gamma} = \gamma_S(p,n+\mu_1),
	\quad\text{for $1<p<\overline{p}_c$,}
	\\
	\overline{c}=0;
	\end{gather*}
	
	\item if $n-d_* < \sqrt{\delta} < n+2$, then
	\begin{gather*}
	\overline{p}_c = p_F(n+\k),
	\\
	\overline{\gamma} =
	\left\{
	\begin{aligned}
	&\gamma_S(p,n+\mu_1), &\quad&\text{for $1<p< p_*$,}
	\\
	&2\gamma_F(p,n+\k), &\quad&\text{for $p_* \le p<\overline{p}_c$,}
	\end{aligned}
	\right.
	\\
	\overline{c} =
	\left\{
	\begin{aligned}
	&0, &\quad&\text{for $1<p< p_*$,}
	\\
	&2-\sgn\delta, &\quad&\text{for $p=p_*$,}
	\\
	&1-\sgn\delta, &\quad&\text{for $p_* < p<\overline{p}_c$;}
	\end{aligned}
	\right.
	\end{gather*}
	
	\item if $\sqrt{\delta} \ge
	n+2$, then
	\begin{gather*}
	\overline{p}_c = p_F(n+\k),
	\\
	\overline{\gamma} = 2\gamma_F(p,n+\k)
	\quad\text{for $1<p<\overline{p}_c$,}
	\\
	\overline{c} = 1-\sgn\delta.
	\end{gather*}
\end{itemize}

\textit{Case 2: $n+\mu_1=1$.} Taking in account that
\begin{equation*}
	q>0 \Longleftrightarrow \sqrt{\delta}<n+2
\end{equation*}
we have:
\begin{itemize}
	\item if $\sqrt{\delta}<n+2$, then
	\begin{gather*}
		\overline{p}_c = p_S(n+\mu_1) = p_S(1) = +\infty,
		\\
		\overline{\gamma} = \gamma_S(p,n+\mu_1) = \gamma_S(p,1) = 2+2p,
		\quad\text{for $1<p<\overline{p}_c$},
		\\
		\overline{c}=0;		
	\end{gather*}
	
	\item if $\sqrt{\delta}=n+2$, then
	\begin{gather*}
	\overline{p}_c = p_S(n+\mu_1) = p_F(n+\k) = +\infty,
	\\
	\overline{\gamma} = \gamma_S(p,n+\mu_1) = 2\gamma_F(p,n+\k) = 2+2p,
	\quad\text{for $1<p<\overline{p}_c$},
	\\
	\overline{c}=2-\sgn\delta;		
	\end{gather*}
	
	\item if $\sqrt{\delta} > n+2$, then
	\begin{gather*}
	\overline{p}_c = p_F(n+\k) = p_F\left((n-\sqrt{\delta})/2\right) = +\infty,
	\\
	\overline{\gamma} = 2\gamma_F(p,n+\k) = 2\gamma_F \left(p,(n-\sqrt{\delta})/2 \right),
	\quad\text{for $1<p<\overline{p}_c$},
	\\
	\overline{c} = 1-\sgn\delta.
	\end{gather*}
\end{itemize}

\textit{Case 3: $n+\mu_1<1$.}
Taking in account that
\begin{align*}
p_*>1 &\Longleftrightarrow \sqrt{\delta} > n+2,
\\
q > 0 &\Longleftrightarrow p > p_*,
\end{align*}
we have:
\begin{itemize}
	\item if $\sqrt{\delta} \le
	n+2$, then
	\begin{gather*}
	\overline{p}_c = p_S(n+\mu_1) = +\infty,
	\\
	\overline{\gamma} = \gamma_S(p,n+\mu_1)
	\quad\text{for $1<p<\overline{p}_c$,}
	\\
	\overline{c}=0;
	\end{gather*}
	
	\item if $\sqrt{\delta} > n+2$, then
	\begin{gather*}
	\overline{p}_c = p_S(n+\mu_1) = +\infty,
	\\
	\overline{\gamma} =
	\left\{
	\begin{aligned}
	&2\gamma_F(p,n+\k), &\quad&\text{for $1<p \le p_*$,}
	\\
	&\gamma_S(p,n+\mu_1), &\quad&\text{for $p_* < p<\overline{p}_c$,}
	\end{aligned}
	\right.
	\\
	\overline{c} =
	\left\{
	\begin{aligned}
	&1-\sgn\delta, &\quad&\text{for $1<p< p_*$,}
	\\
	&2-\sgn\delta, &\quad&\text{for $p=p_*$,}
	\\
	&0, &\quad&\text{for $p_* < p<\overline{p}_c$.}
	\end{aligned}
	\right.
	\end{gather*}
\end{itemize}

Now that the definitions of $p_c$, $\overline{p}_c$ and $\widetilde{T}$, $\widetilde{S}$ are clear, we can go further.

\subsection{Comparison between the obtained exponents and lifespans}\label{sub:comparison}

As we said, from our computations we found the blow-up for $1 < p < p_{k} = \max\{p_c, \overline{p}_c\}$
and the upper lifespan estimates  $T_\e \lesssim \min\{\widetilde{T}, \widetilde{S}\}$. Observing that $\widetilde{T}(\e), \widetilde{S}(\e) \to +\infty$ for $\e \to 0$ and comparing the relations \eqref{Ttil} and \eqref{Stil}, we get that $\widetilde{T} \lessgtr \widetilde{S}$ if $p\gamma \gtrless \overline{\gamma}$. If $p\gamma = \overline{\gamma}$, the exponent of the logarithm comes into play, indeed $\widetilde{T} \lesseqgtr \widetilde{S}$ if $pc \gtreqless \overline{c}$.
Now, we need to consider two cases according to the fact that $H_0 = \ints h(x)dx$ is positive or null.

\textit{Case $H_0 > 0$.}
We can easily infer that $p_k = p_{\mu_1,\delta}(n)$ defined in \eqref{pch>0}. We establish the upper bound for the lifespan $T_{\e}$ without making distinctions according to the value of $n+\mu_1$. Taking in account that, for $p>1$,
\begin{align*}
	2 p \, \gamma_F(p,n+\k) > \gamma_S(p,n+\mu_1)
	\Longleftrightarrow&\,
	\left\{
	\begin{aligned}
	&p>1, &\quad&\text{if $\sqrt{\delta}\ge n$,}
	\\
	&1<p<\frac{2}{n-\sqrt{\delta}},
	&\quad&\text{if $n-2<\sqrt{\delta}<n$,}
	\end{aligned}
	\right.
	\\
	\text{$n-d_*<\sqrt{\delta}<n$ and $n+\mu_1>1$}
	\Longrightarrow&\,
	p_F(n+\k) < \frac{2}{n-\sqrt{\delta}},
	\\
	%
	\text{$\sqrt{\delta} \le n-d_*$
	and
	$1<p<p_{k}$}
	\Longrightarrow&\,
	q>0,
\end{align*}
we have:
\begin{itemize}
	\item if $\sqrt{\delta}\le n-2$ and $1<p<p_{k}$, then
	$p\gamma<\overline{\gamma}$ and so $\widetilde{S} < \widetilde{T}$;
	
	\item if $n-2<\sqrt{\delta}<n-d_*$ and
		\begin{itemize}[label=$\circ$]
		\item if $1<p<\frac{2}{n-\sqrt{\delta}}$,
			then $p\gamma>\overline{\gamma}$ and so $\widetilde{T} < \widetilde{S}$;
		\\
		\item if $p=\frac{2}{n-\sqrt{\delta}}$,
		then
		$p\gamma=\overline{\gamma}$
		and
		$pc \ge \overline{c}$, so that $\widetilde{T} \le \widetilde{S}$;
		\\
		\item if $\frac{2}{n-\sqrt{\delta}}<p<p_{k}$, then $p\gamma<\overline{\gamma}$, so that $\widetilde{S} < \widetilde{T}$;
		\end{itemize}
		
	\item if $\sqrt{\delta} \ge n-d_*$ and if $1<p<p_{k}$, then
	$p\gamma>\overline{\gamma}$
	so that
	$\widetilde{T} < \widetilde{S}$.
\end{itemize}

\textit{Case $H_0=0$.} From now on we will impose the additional hypothesis that $\mu_1 > 0$ (which can be relaxed to $n+\mu_1 > 1$).

Obviously, $p_F(n+\k+\sqrt{\delta}) \le p_F(n+\k)$, hence again $p_k = p_{\mu_1,\delta}(n)$ defined in \eqref{pch>0}.
Consider that, for $p>1$,
\begin{align*}
	p\gamma_F(p,n+\k+\sqrt{\delta}) > \gamma_F(p,n+\k)
	&\Longleftrightarrow
	\text{$\sqrt{\delta}<2$
	and
	$1<p< 1+\frac{2-\sqrt{\delta}}{n+\k+\sqrt{\delta}}$}
	;
	\\
	2p \, \gamma_F(p,n+\k+\sqrt{\delta}) > \gamma_S(p,n+\mu_1)
	&\Longleftrightarrow
	\text{$n=1$
	and
	$\sqrt{\delta}<1$
	and
	$1<p<\frac{2}{1+\sqrt{\delta}}$.}
\end{align*}
If $n\ge2$, taking in account that
\begin{equation*}
	n-d_*<\sqrt{\delta}<n+2
	\Longrightarrow
	1+\frac{2-\sqrt{\delta}}{n+\k+\sqrt{\delta}} < p_*,
\end{equation*}
we can prove that $p\gamma<\overline{\gamma}$ for $1<p<p_{k}$, and so $\widetilde{S} < \widetilde{T}$.

Suppose now that $n=1$. Recall the definition \eqref{theta} of $\theta$ and note that it satisfies $\sgn\theta = \sgn\{\mu_1-3\}$, and moreover that the following relations hold:
\begin{align*}
	\mu_1 >0 &\Longrightarrow
	\text{$1-d_* < 1$
	and
	$1+\frac{2-\sqrt{\delta}}{n+\k+\sqrt{\delta}} < p_S(1+\mu_1)$,}
	\\
	0<\mu_1<3
	&\Longleftrightarrow
	1-d_* > 0,
	\\
	0<\mu_1<3 &\Longrightarrow |1-d_*|> \theta,
	\\
	\sqrt{\delta} > -1 + d_*
	&\Longrightarrow
	\frac{2}{1+\sqrt{\delta}} < p_S(1+\mu_1),
	\\
	\theta < \sqrt{\delta} < 3
	&\Longrightarrow
	\text{$1+\frac{2-\sqrt{\delta}}{n+\k+\sqrt{\delta}}<p_*$ and $\frac{2}{1+\sqrt{\delta}} < p_*$}.
\end{align*}
Recall also the definition \eqref{r_*} of $r_* \equiv r_*(\mu_1,\delta)$ and Remark\til\ref{rem:r*}.
%
%
Hence, we get that:
\begin{itemize}
	\item if $\sqrt{\delta}=0$, $\mu_1 =3$ and if $1<p<p_{k}$,
	then $p\gamma>\overline{\gamma}$ and so $\widetilde{T} < \widetilde{S}$;
	
	\item if $\sqrt{\delta}=0$ and $\mu_1 \neq 3$, or if $0<\sqrt{\delta} < 1$, we have:
	\begin{itemize}[label=$\circ$]
		\item if $1<p<r_*$,
		then $p\gamma>\overline{\gamma}$ and so $\widetilde{T} < \widetilde{S}$;
		\\
		\item if $p=r_*$,
		then
		$p\gamma=\overline{\gamma}$
		and
		$pc \le \overline{c}$, so that $\widetilde{S} \le \widetilde{T}$;
		\\
		\item if $r_*<p<p_{k}$, then $p\gamma<\overline{\gamma}$, so that $\widetilde{S} < \widetilde{T}$;
	\end{itemize}
	
	\item if $\sqrt{\delta} \ge 1$ and if $1<p<p_{k}$, then
	$p\gamma < \overline{\gamma}$
	so that
	$\widetilde{S} < \widetilde{T}$.
\end{itemize}

At the end, recalling the definitions of $\gamma, \overline{\gamma}, c$ and $\overline{c}$ in the various cases and summing all up, we conclude the proof for Theorem \ref{thm1} and Theorem \ref{thm:H=0}.

\subsection{Proof for Theorem\til\ref{thm_negmass}}\label{sub:negmass}

We will only sketch the demonstration, since it is a variation of the previous one. Let us introduce the functional
\begin{equation*}
	G_0(t) = \ints w(x,t) dx
\end{equation*}
and, as in \cite{LST19.1, LST19.2}, the bounded multiplier
\begin{equation*}
	m(t) := \exp\left( \nu_1 \frac{(1+t)^{1-\beta}}{1-\beta}\right) .
\end{equation*}
Choosing the test function $\phi=\phi(x,s)$ in \eqref{def:negmass} to satisfy
\eqref{eq:testfunction}, deriving respect to the time and multiplying by $m$, we get that

\begin{equation*}
	[m(t)G_0'(t)]' + \frac{\nu_2}{(1+t)^2} m(t)G_0(t) = m(t) \ints |w(x,t)|^p dx,
\end{equation*}
and hence
\begin{equation}\label{G0}
	\begin{split}
		G_0(t) = &\ G_0(0) + m(0)G_0'(0) \int_{0}^{t}m^{-1}(s)ds
		\\
		&- \int_{0}^{t} m^{-1}(s)ds \int_0^s m(r) \frac{\nu_2}{(1+r)^2} G_0(r) dr
		\\
		&+ \int_{0}^{t} m^{-1}(s)ds \int_0^s m(r) dr \ints |w(x,r)|^p dx.
	\end{split}
\end{equation}
It is simple to see, by comparison argument, that $G_0$ is positive. Indeed, by the hypothesis on initial data, we know that $G_0(0) = \ints f(x)dx$ and $G'_0(0) = \ints g(x)dx$ are non-negative and not both zero. If $G_0(0)>0$, by continuity $G_0$ is positive for small time. If $G_0(0)=0$ and $G'(0)>0$, then $G_0$ is increasing and again positive for small time $t>0$. If we suppose that there exist a time $t_0>0$ such that $G_0(t_0)=0$, calculating \eqref{G0} in $t=t_0$ we get a contradiction, since the left-hand term would be zero and the right-hand term would be strictly positive. Then, $G_0$ is positive for any time $t>0$.
Define now the functional $\overline{G}_0$ as the solution of the integral equation
\begin{equation}\label{GG0}
\begin{split}
\o{G}_0(t) = &\ \frac{1}{2}G_0(0) + \frac{m(0)}{2}G_0'(0) t
- m(0) \int_{0}^{t} ds \int_0^s \frac{\nu_2}{(1+r)^2} \o{G}_0(r) dr
\\
&+ m(0) \int_{0}^{t} ds \int_0^s dr \ints |w(x,r)|^p dx.
\end{split}
\end{equation}
Since $m(0) < m(t) <1$ for any $t>0$ and $\nu_2 < 0$, we have that
\begin{equation*}
G_0(t) - \o{G}_0(t) \ge \frac{1}{2}G_0(0) + \frac{m(0)}{2}G_0'(0) t
- m(0) \int_{0}^{t} ds \int_0^s \frac{\nu_2}{(1+r)^2} [G_0(r) - \o{G}_0(r)] dr,
\end{equation*}
and, again by comparison argument, we infer that $G_0 \ge \o{G}_0$. From \eqref{GG0} we get that $\o{G}_0$ satisfies
\begin{equation*}
	\o{G}''_0(t) + \frac{m(0)\nu_2}{(1+t)^2} \o{G}(t) = m(0)\ints |w(x,t)|^p dx,
\end{equation*}
which has the same structure of \eqref{2} with $\mu_1=0$ and $\mu_2=m(0)\nu_2$. Setting
\begin{equation*}
	{\lambda} := 1 + \sqrt{{\delta}},
	\quad
	{\k} := -{\l}/2,
	\quad
	\mathcal{G}(t) := (1+t)^{\k+\l} \overline{G}_0(t),
\end{equation*}
similarly as in Subsection~\ref{sec:F0} we obtain
\begin{equation}\label{overG}
\begin{split}
\o{G}_0(t) = &\ \o{G}_0(0)(1+t)^{-\k} + [\k \o{G}_0(0) + \o{G}'_0(0)] (1+t)^{-\k} \int_{0}^{t}(1+s)^{-\l} ds \\
&+ (1+t)^{-\k}
\int_{0}^{t}(1+s)^{-\l} ds
\int_{0}^{s}(1+r)^{\k+\l}dr
\int_{\R^n}|w(x,r)|^p dx
\end{split}
\end{equation}
and then
\begin{align}
\label{Gkey1}
\mathcal{G}(t) &\gtrsim \int_{T_0}^{t}ds \int_{T_0}^{s} r^{-(n+\k+\l)(p-1)} \mathcal{G}(r)^p dr,
\\
\label{Gkey3}
\mathcal{G}(t) &\gtrsim \e t^{\l}.
\end{align}

Now, to get the counterpart of \eqref{key2}, define the functional
\begin{equation*}
G_1(t):=\int_{\R^n}w(x,t)\psi_1(x,t)dx,
\end{equation*}
with $\psi_1$ defined in \eqref{YZtest}. After a derivation respect to the time of the definition of energy solution \eqref{def:negmass} and multiplying both of its sides with $m(t)$, we have that
\[
\begin{split}
&\frac{d}{dt}\left\{m(t)
\int_{\R^n}w_t(x,t)\phi(x,t)dx\right\}\\
&+m(t)\int_{\R^n}\left\{-w_t(x,t)\phi_t(x,t)-w(x,t)\Delta\phi(x,t)\right\}dx\\
=&\ - m(t)\int_{\R^n} \frac{\nu_2}{(1+t)^{2}}w(x,t)\phi(x,t)dx + m(t)\int_{\R^n}|w(x,t)|^p\phi(x,t)dx.
\end{split}
\]
By integration on $[0,t]$ we get
\begin{equation*}
\begin{split}
&m(t)
\int_{\R^n}w_t(x,t)\phi(x,t)dx
-m(0)\e\int_{\R^n}g(x)\phi(x,0)dx\\
&-m(t)\int_{\R^n} w(x,t)\phi_t(x,t)dx + m(0)\e \int_{\R^n} f(x)\phi_t(x,0)dx\\
&+ \int_0^t ds \int_{\R^n} m(s)\frac{\nu_1}{(1+s)^\beta}w(x,s)\phi_t(x,s)dx
\\
& + \int_0^t ds \int_{\R^n} m(s)w(x,s)\phi_{tt}(x,s)dx - \int_0^tds\int_{\R^n} m(s)
w(x,s)\Delta\phi(x,s) \\
=&\ - \int_0^tds\int_{\R^n}m(s) \frac{\nu_2}{(1+s)^{2}}w(x,s)\phi(x,s)dx\\
&+ \int_0^tds \int_{\R^n} m(s)|w(x,s)|^p\phi(x,s)dx.
\end{split}
\end{equation*}
Setting
$
\phi(x,t)=\psi_1(x,t)=e^{-t}\phi_1(x)
$
on $\supp w$ and recalling the bounds on the multiplier $m(t)$,
we obtain
\[
\begin{split}
G_1'(t)+2G_1(t)
\ge &\ m(0) G'_1(0) + 2m(0)G_1(0) \\
&+ m(0) \int_0^t
\left\{\frac{\nu_1}{(1+s)^\beta} - \frac{\nu_2}{(1+s)^{2}} \right\} G_1(s)ds\\
&+ m(0) \int_0^tds\int_{\R^n} |w(x,s)|^pdx.
\end{split}
\]
Integrating the above inequality over $[0,t]$ after a multiplication by $e^{2t}$, we get
\begin{equation*}\label{G}
\begin{split}
G_1(t)
\ge &\
G_1(0) e^{-2t} +
m(0) \{ G'_1(0) + 2 G_1(0) \} \frac{1-e^{-2t}}{2}
\\
& +m(0) e^{-2t} \int_0^t e^{2s} ds \int_0^s
\left\{\frac{\nu_1}{(1+r)^\beta} - \frac{\nu_2}{(1+r)^{2}} \right\} G_1(r)dr
\\
& +m(0) e^{-2t} \int_0^t e^{2s} ds \int_0^s
dr \int_{\R^n} |w(x,r)|^p\phi(x,r)dx,
\end{split}
\end{equation*}
from which, thanks again to a comparison argument, we infer that $G_1$ is non-negative, and so, neglecting the last two term in the above inequality, it is easy to reach
\begin{equation*}
	G_1(t) \gtrsim \e
	\quad
	\text{for $t \gtrsim 1$.}
\end{equation*}
Hence, we have also
\begin{equation*}
	\int_{\R^n} |w(x,t)|^p dx \gtrsim \e^p (1+t)^{-(n-1)\frac{p}{2} +n -1}
	\quad
	\text{for $t \gtrsim 1$,}
\end{equation*}
and so, taking in account \eqref{overG}, it holds
\begin{equation*}
\begin{split}
\overline{G}_0(t) & \gtrsim \e^p
(1+t)^{-\k} \int_{T_1}^{t}
(1+s)^{-\l} ds \int_{T_1}^{s}
(1+r)^{q+\sqrt{\delta}-1}dr
\quad\text{for $t \ge T_1 $},
\end{split}
\end{equation*}
for some $T_1>0$, where
\begin{equation*}
q \equiv q(p)
:= -\frac{1+\sqrt{\delta}}{2}-(n-1)\frac{p}{2}+n+1.
\end{equation*}
Finally, we obtain the inequality analogous to \eqref{key2}, i.e.
\begin{equation}\label{Gkey2}
\mathcal{G}(t) \gtrsim \e^p
\left\{
\begin{aligned}
& t^{\l+q}
&\quad&\text{if $q>0$,}
\\
& t^{\l} \ln(1+t)
&\quad&\text{if $q=0$,}
\\
& t^{\l}
&\quad&\text{if $q<0$.}
\end{aligned}
\right.
\end{equation}
Thanks to \eqref{Gkey1}, \eqref{Gkey3} and \eqref{Gkey2} and applying the Kato's type lemma as in Subsection \ref{sec:kato_appl}, we can conclude the proof of Theorem \ref{thm_negmass}.


\section*{Acknowledgement}
The first author is supported by NSF of Zhejiang Province (LY18A010008) and NSFC (11771194).
The second author is a member of the Gruppo Nazionale per l'Analisi Matematica, la
Probabilit\`a e le loro Applicazioni (GNAMPA) of the Istituto Nazionale di
Alta Matematica (INdAM).
The third author is partially supported by the Grant-in-Aid for Scientific Research (B) (No.18H01132), Japan Society for the Promotion of Science.


\bibliographystyle{plain}

\end{document}